\title{Matrix probing and its conditioning\thanks{Dated February 2011.}}
\author{Jiawei Chiu \thanks{Corresponding author. \email{jiawei@mit.edu}. Department of Mathematics, MIT, Cambridge, MA 02139, USA.}\and Laurent Demanet \thanks{Department of Mathematics, MIT, Cambridge, MA 02139, USA.}}
\begin{document}
\maketitle
\newcommand{\slugmaster}{%
}

\begin{abstract}
When a matrix $A$ with $n$ columns is known to be well approximated by a linear combination of basis matrices $B_1,\ldots,B_p$, we can apply $A$ to a random vector and solve a linear system to recover this linear combination. The same technique can be used to obtain an approximation to $A^{-1}$. A basic question is whether this linear system is well-conditioned. This is important for two reasons: a well-conditioned system means (1) we can invert it and (2) the error in the reconstruction can be controlled. In this paper, we show that if the Gram matrix of the $B_j$'s is sufficiently well-conditioned and each $B_j$ has a high numerical rank, then $n\propto p \log^2 n$ will ensure that the linear system is well-conditioned with high probability. Our main application is probing linear operators with smooth pseudodifferential symbols such as the wave equation Hessian in seismic imaging \cite{demanet2011matrix}. We also demonstrate numerically that matrix probing can produce good preconditioners for inverting elliptic operators in variable media.
\end{abstract}

\section*{\small Acknowledgments}
{\footnotesize JC is supported by the A*STAR fellowship from Singapore. LD is supported in part by a grant from the National Science Foundation and the Alfred P. Sloan foundation. We thank Holger Rauhut for the interesting discussions, and the anonymous referees for suggesting several improvements to the paper.}

\pagestyle{myheadings}
\thispagestyle{plain}

\section{Introduction}
The earliest randomized algorithms include Monte Carlo integration and Monte Carlo Markov chains \cite{andrieu03}. These are standard techniques in numerical computing with widespread applications from physics, econometrics to machine learning. However, they are often seen as the methods of last resort, because they are easy to implement but produce solutions of uncertain accuracy.

In the last few decades, a new breed of randomized algorithms has been developed by the computer science community. These algorithms remain easy to implement, and in addition, have failure probabilities that are provably negligible. In other words, we have rigorous theory to ensure that these algorithms perform consistently well. Moreover, their time complexity can be as good as the most sophisticated deterministic algorithms, e.g., Karp-Rabin's pattern matching algorithm \cite{karp1987efficient} and Karger's min-cut algorithm \cite{karger96}.

In recent years, equally attractive randomized algorithms are being developed in the numerical community. For example, in compressed sensing \cite{candes2006stable}, we can recover sparse vectors with random measurement matrices and $\ell^1$ minimization. Another interesting example is that we can build a good low rank approximation of a matrix by \emph{applying it to random vectors} \cite{halko}.

Our work carries a similar flavor: often, the matrix $A$ can be approximated as a linear combination of a small number of matrices and the idea is to obtain these coefficients by applying $A$ to a random vector or just a few of them. We call this ``forward matrix probing.'' What is even more interesting is that we can also probe for $A^{-1}$ by applying $A$ to a random vector. We call this ``backward matrix probing'' for a reason that will be clear in Section \ref{sec:probeinv}.

Due to approximation errors, the output of ``backward probing'' denoted as $C$, is only an approximate inverse. Nevertheless, as we will see in Section \ref{sec:numerical}, $C$ serves very well as a preconditioner for inverting $A$, and we believe that its performance could match that of multigrid methods for elliptic operators in smooth media.

We like to add that the idea of ``matrix probing'' is not new. For example, Chan \cite{chan1985survey, chan1992interface} et. al. use the technique to approximate $A$ with a sparse matrix. Another example is the work by Pfander et. al. \cite{pfander2008identification} where the same idea is used in a way typical in compressed sensing. In the next section, we will see that their set-up is fundamentally different from ours.

\subsection{Forward matrix probing}\label{sec:basic}

Let $\mathcal{B}=\{B_1,\ldots,B_p\}$ where each $B_j \in \cplexes^{m \times n}$ is called a basis matrix.  Note that $\mathcal{B}$ is specified in advance. Let $u$ be a Gaussian or a Rademacher sequence, that is each component of $u$ is independent and is either a standard normal variable or $\pm 1$ with equal probability.

Define the matrix $L\in \cplexes^{m\times p}$ such that its $j$-th column is $B_j u$. Let $A\in \cplexes^{m\times n}$ be the matrix we want to probe and suppose $A$ lies in the span of $\mathcal{B}$. Say
$$A=\sum_{i=1}^p c_i B_i \mbox{ for some } c_1,\ldots,c_p\in \cplexes.$$

Observe that $Au = \sum_{i=1}^p c_i (B_i u)=Lc$. Given the vector $Au$, we can obtain the coefficient vector $c=(c_1,\ldots,c_p)^T$ by solving the linear system
\begin{equation}\label{eq:tosolve}
Lc=Au.
\end{equation}

In practice, $A$ is not exactly in the span of a small $\mathcal{B}$ and Equation (\ref{eq:tosolve}) has to be solved in a least squares sense, that is $c=L^{+}(Au)$ where $L^+$ is the pseudoinverse of $L$.

We will assume that $p\leq n$. Otherwise there are more unknowns than equations and there is no unique solution if there is any. This differs from the set-up in \cite{pfander2008identification} where $n\gg p$ but $A$ is assumed to be a sparse linear combination of $B_1,\ldots,B_p$.

\subsection{Conditioning of $L$}\label{sec:conditioningprelude}

Whether Equation (\ref{eq:tosolve}) can be solved accurately depends on $\cond(L)$, the condition number of $L$. This is the ratio between the largest and the smallest singular values of $L$ and can be understood as how different $L$ can stretch or shrink a vector. 

Intuitively, whether $\cond(L)$ is small depends on the following two properties of $\mathcal{B}$.

\begin{enumerate}
\item The $B_i$'s ``act differently'' in the sense that $\inner{B_j}{B_k}\simeq \delta_{jk}$ for any $1\leq j,k \leq p$.\footnote{Note that $\inner{\cdot}{\cdot}$ is the Frobenius inner product and $\delta_{jk}$ is the Kronecker delta.}
\item Each $B_i$ has a high rank so that $B_1 u,\ldots,B_p u \in \cplexes^{n}$ exist in a high dimensional space.
\end{enumerate}

When $\mathcal{B}$ possesses these two properties and $p$ is sufficiently small compared to $n$, it makes sense that $L$'s columns, $B_1 u,\ldots, B_p u$, are likely to be independent, thus guaranteeing that $L$ is invertible, at least.

We now make the above two properties more precise. Let
\begin{equation}\label{eq:MN}
M=L^* L \in \cplexes^{p \times p} \mbox{ and } N=\E M.
\end{equation}

Clearly, $\cond(M)= \cond(L)^2$. If $\E M$ is ill-conditioned, there is little chance that $M$ or $L$ is well-conditioned. This can be related to Property 1 by observing that
\begin{equation}\label{eq:Njk}
N_{jk} = \E M_{jk} =\tr({B_j}^* B_k) = \inner{B_j}{ B_k}.
\end{equation}

If $\inner{B_j}{B_k} \simeq \delta_{jk}$, then the Gram matrix $N$ is approximately the identity matrix which is well-conditioned. Hence, a more quantitative way of putting Property 1 is that we have control over $\kappa(B)$ defined as follows.
\begin{definition}\label{def:kappa}
Let $\mathcal{B} = \{B_1,\ldots,B_p\}$ be a set of matrices. Define its condition number $\kappa(\mathcal{B})$ as the condition number of the matrix $N\in \cplexes^{p \times p}$ where $N_{jk}=\inner{B_j}{B_k}$.
\end{definition}

On the other hand, Property 2 can be made precise by saying that we have control over $\lambda(\mathcal{B})$ as defined below.

\begin{definition}\label{def:weakcondition}
Let $A\in \cplexes^{m\times n}$. Define its weak condition number\footnote{Throughout the paper, $\norm{\cdot}$ and $\norm{\cdot}_F$ denote the spectral and Frobenius norms respectively.} as
$$\lambda(A) = \frac{\norm{A} n^{1/2}}{\norm{A}_F}.$$

Let $\mathcal{B}$ be a set of matrices. Define its (uniform) weak condition number as $$\lambda(\mathcal{B})= \max_{A\in \mathcal{B}} \lambda(A).$$
\end{definition}

We justify the nomenclature as follows. Suppose $A\in \cplexes^{n\times n}$ has condition number $k$, then $\norm{A}_F^2=\sum_{i=1}^{n} \sigma_i^2 \geq n\sigma_{\min}^2 \geq n \norm{A}^2/k^2$. Taking square root, we obtain $\lambda(A) \leq k$. In other words, any well-conditioned matrix is also weakly well-conditioned. And like the usual condition number, $\lambda(\mathcal{A})\geq 1$ because we always have $\norm{A}_F \leq n^{1/2}\norm{A}$.

The numerical rank of a matrix $A$ is $\norm{A}_F^2/\norm{A}^2=n \lambda(A)^{-2}$, thus having a small $\lambda(A)$ is the same as having a high numerical rank. We also want to caution the reader that $\lambda(\mathcal{B})$ is defined very differently from $\kappa(\mathcal{B})$ and is not a weaker version of $\kappa(\mathcal{B})$.

Using classical concentration inequalties, it was shown \cite{demanet2011matrix} that when $\lambda(\mathcal{B})$ and $\kappa(\mathcal{B})$ are fixed, $p=\tilde{O}({n}^{1/2})$\footnote{Note that $\tilde{O}(n)$ denotes $O(n \log^c n)$ for some $c>0$. In other words, ignore log factors.} will ensure that $L$ is well-conditioned with high probability.

In this paper, we establish a stronger result, namely that $p=\tilde{O}(n)$ suffices. The implication is that we can expect to recover $\tilde{O}(n)$ instead of $\tilde{O}(n^{1/2})$ coefficients. The exact statement is presented below.

\begin{theorem}[Main result]\label{thm:main}
Let $C_1,C_2>0$ be numbers given by Remark \ref{remark:C1C2} in the Appendix.
Let $\mathcal{B}=\{B_1,\ldots,B_p\}$ where each $B_j\in \cplexes^{m\times n}$. Define $L\in \cplexes^{n\times p}$ such that its $j$-th column is $B_j u$ where $u$ is either a Gaussian or Rademacher sequence. Let $M=L^* L$, $N=\E M$ $\kappa=\kappa(\mathcal{B})$ and $\lambda = \lambda(\mathcal{B})$. Suppose $$n\geq p\left(C \kappa  \lambda \log n\right)^2 \mbox{ for some }C \geq 1.$$
Then
$$\P{\norm{M-N}\geq \frac{t\norm{N}}{\kappa}}\leq 2 C_2 p n^{1-\alpha} \mbox{ where } \alpha=\frac{t C}{e C_1}.$$
\end{theorem}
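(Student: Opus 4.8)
The plan is to control $\|M-N\|$ via a matrix concentration inequality. Write $M = \sum_{i=1}^n X_i$ where $X_i$ is the rank-one-in-spirit contribution of the $i$-th coordinate of $u$: more precisely, let $u = (u_1,\dots,u_n)^T$ and observe that $M_{jk} = (B_j u)^*(B_k u) = \sum_{i,\ell} \bar u_i u_\ell (B_j^* B_k)_{i\ell}$, so grouping by the outer product $u u^*$ we get $M = \sum_{i,\ell} \bar u_i u_\ell Y_{i\ell}$ for fixed matrices $Y_{i\ell} \in \cplexes^{p\times p}$ with $(Y_{i\ell})_{jk} = (B_j^* B_k)_{i\ell} = \overline{(B_j)_{i,\cdot}} \cdot (B_k)_{\ell,\cdot}$ (a dot product of rows). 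The diagonal terms $i=\ell$ have $\E|u_i|^2 = 1$ and contribute the bulk; the off-diagonal terms have mean zero. So I would split $M - N = \sum_i (|u_i|^2-1) Y_{ii} + \sum_{i\neq \ell} \bar u_i u_\ell Y_{i\ell}$ and bound each piece. The first is a sum of $n$ independent mean-zero random matrices, and the second is a matrix-valued chaos of order two, which after a decoupling argument (replacing one copy of $u$ by an independent $u'$) also becomes a sum of independent terms conditionally.

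The workhorse will be the matrix Bernstein inequality (for the first sum, and for the decoupled second sum conditionally on one of the two Gaussian/Rademacher vectors). To apply it I need two ingredients: a bound on $\|Y_{ii}\|$ (and $\|Y_{i\ell}\|$) — the "uniform" size of each summand — and a bound on the variance proxy $\|\sum_i \E[(|u_i|^2-1)^2] Y_{ii}^2\|$. This is exactly where the hypotheses enter. The operator norm $\|Y_{i\ell}\|$ is controlled by the rows of the $B_j$'s; summing $\|Y_{ii}\|$ over $i$ recovers something like $\sum_j \|B_j\|_F^2$, while the individual size of $\|Y_{ii}\|$ is governed by $\max_j \|(B_j)_{i,\cdot}\|^2$, and the worst case of that over $i$ is essentially $\|B_j\|^2$. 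The ratio of "total mass" to "max single mass" is therefore $\sim \|B_j\|_F^2/\|B_j\|^2 = n/\lambda(B_j)^2 \geq n/\lambda^2$ — this is precisely the numerical-rank quantity, and it is what makes the Bernstein bound nontrivial: the effective number of independent summands is $n/\lambda^2$, not just $n$. Meanwhile $\|N\| \asymp \|B_j\|_F^2 \asymp$ the scale of the diagonal, and $\kappa$ enters through the spread of eigenvalues of $N$ / the off-diagonal Gram entries. Carefully, I expect the Bernstein variance term to come out as roughly $p \lambda^2 \|N\|^2/n$ up to constants, so that the deviation $\|M-N\| \lesssim \sqrt{p\lambda^2\|N\|^2 \log n / n} + (\text{uniform bound})\log n$, and the hypothesis $n \geq p(C\kappa\lambda\log n)^2$ is exactly what forces this to be $\leq t\|N\|/\kappa$ with the claimed failure probability $\sim p n^{1-\alpha}$; the factor $p$ in front of $n^{1-\alpha}$ is the dimension factor from the matrix Bernstein tail, and $\alpha = tC/(eC_1)$ tracks the constants $C_1$ (from Bernstein) and the chosen $C$.

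The main obstacle I anticipate is the order-two chaos term $\sum_{i\neq\ell}\bar u_i u_\ell Y_{i\ell}$: it is not a sum of independent matrices, so I must decouple it (at the cost of an absolute constant) into $\sum_{i,\ell}\bar u_i u'_\ell Y_{i\ell}$ with $u'$ an independent copy, then condition on $u'$, apply matrix Bernstein to the resulting sum over $i$ of independent matrices $\bar u_i (\sum_\ell u'_\ell Y_{i\ell})$, and bound the conditional variance and uniform norm — which themselves are random (they depend on $u'$) and must be controlled with high probability by a further concentration step (or a crude deterministic bound that loses only logs). Handling the Rademacher and Gaussian cases uniformly in this step, and bookkeeping the constants so that they can be packaged into the $C_1, C_2$ promised by Remark \ref{remark:C1C2}, is the fiddly part; everything else is a fairly mechanical application of Bernstein once the $Y_{i\ell}$ norm estimates in terms of $\lambda$, $\kappa$, and $\|N\|$ are in hand.
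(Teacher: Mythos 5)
Your deterministic groundwork coincides with the paper's: your $Y_{i\ell}$ are exactly the paper's $T_{\xi\eta}$, the split of $M-N$ into a diagonal part $\sum_{\xi}(|u_\xi|^2-1)T_{\xi\xi}$ plus an off-diagonal chaos, the decoupling step, and the variance-type estimate $\norm{Q}=\norm{\sum_{\xi\eta}T_{\xi\eta}^*T_{\xi\eta}}\leq p\lambda^2\norm{N}^2/n$ all appear there (Propositions \ref{thm:QBSB}--\ref{thm:FN}). But the probabilistic engine you propose is not the paper's and, as sketched, has a genuine gap. The paper never invokes a matrix Bernstein inequality: it bounds \emph{all moments} of the decoupled chaos in one shot with a noncommutative Khintchine inequality for chaos of order two (Theorem \ref{thm:khintchinetwo}), getting $(\E\norm{M-N}^s)^{1/s}\leq \sigma (2C_2np)^{1/s}s$ with $\sigma=C_1\max(\norm{Q}^{1/2},\norm{R}^{1/2},\norm{F},\norm{G})$, and then converts this to the subexponential tail by Markov (Propositions \ref{thm:highermoments}, \ref{thm:tailbound}). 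Note also that the block matrices $F,G$ enter through Khintchine, not as a uniform bound on summands.

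The gap in your route is the step you explicitly defer. After decoupling and conditioning on $u'$, both Bernstein inputs --- the conditional variance $\norm{\sum_\xi W_\xi^* W_\xi}$ and the uniform bound $\max_\xi\norm{W_\xi}$, with $W_\xi=\sum_\eta u'_\eta T_{\xi\eta}$ --- are random and are themselves matrix chaoses in $u'$ of the same type you started with; controlling them at the correct scale \emph{is} the difficulty, and your fallback of ``a crude deterministic bound that loses only logs'' is not available: deterministically $\max_\xi\norm{W_\xi}\leq\max_\xi\sum_\eta|u'_\eta|\,\norm{T_{\xi\eta}}$, which generically exceeds its typical size by a factor of order $n^{1/2}$, not $\log n$, and such a loss destroys the claimed scaling $n\geq p(C\kappa\lambda\log n)^2$. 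In expectation the conditional variance is exactly $Q$, but you need it with failure probability comparable to $pn^{1-\alpha}$, which requires a further quantitative concentration argument that is not supplied (and risks being circular). Two smaller corrections: in the Gaussian case the diagonal summands $(|u_\xi|^2-1)T_{\xi\xi}$ are unbounded, so plain matrix Bernstein does not apply to that piece without a subexponential/moment variant; and $\kappa$ does not enter any variance proxy --- in the theorem it appears only because the deviation is compared against the threshold $t\norm{N}/\kappa$, i.e.\ against the smallest eigenvalue of $N$, so your account of where $\kappa$ enters misplaces its role. A Bernstein-flavored proof is plausible (the authors themselves mention a Rauhut--Tropp tail bound as a potential refinement), but as written the load-bearing step is missing.
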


The number $C_1$ is small. $C_2$ may be large but it poses no problem because $n^{-\alpha}$ decays very fast with larger $n$ and $C$. With $t=1/2$, we deduce that with high probability,
$$\cond(M) \leq 2\kappa+1.$$

In general, we let $0<t<1$ and for the probability bound to be useful, we need $\alpha>2$, which implies $C>2e C_1>1$. Therefore the assumption that $C\geq 1$ in the theorem can be considered redundant.

We remark that Rauhut and Tropp have a new result (a Bernstein-like tail bound) that may be used to refine the theorem. This will be briefly discussed in Section \ref{sec:1Dstatstudy} where we conduct a numerical experiment.

Note that when $u$ is a Gaussian sequence, $M$ resembles a Wishart matrix for which the distribution of the smallest eigenvalue is well-studied \cite{edelman1989eigenvalues}. However, each row of $L$ is not independent, so results from random matrix theory cannot be used in this way.

An intermediate result in the proof of Theorem \ref{thm:main} is the following. It conveys the essence of Theorem \ref{thm:main} and may be easier to remember.
\begin{theorem}\label{thm:mainweak}
Assume the same set-up as in Theorem \ref{thm:main}. Suppose $n=\tilde{O}(p)$. Then
$$\E \norm{M-N} \leq C (\log n) \norm{N}(p/n)^{1/2} \lambda \mbox{ for some }C>0.$$
\end{theorem}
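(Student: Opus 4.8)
The plan is to reduce the estimate to a bound on a \emph{decoupled} bilinear chaos and then control that chaos by a matrix Khintchine / Schatten-moment inequality whose variance parameter is governed by the weak condition number. Here $C$ denotes an absolute constant that may change from line to line. Let $v_j^{(a)}$ be the $a$-th column of $B_j$, and let $W^{(a)}$ be the matrix whose $j$-th column is $v_j^{(a)}$, so that $L=\sum_{a=1}^{n}u_aW^{(a)}$ and $M=L^*L=\sum_{a,b}u_au_b\,Q_{ab}$ with $Q_{ab}:=(W^{(a)})^*W^{(b)}\in\cplexes^{p\times p}$. Observe $Q_{ba}=Q_{ab}^*$ and $N=\sum_aQ_{aa}$, so $M-N=\sum_{a\ne b}u_au_bQ_{ab}+\sum_a(u_a^2-1)Q_{aa}$; for a Rademacher probe the second (``diagonal'') sum vanishes, and for a Gaussian probe it is absorbed into the symmetrization below.

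\emph{Decoupling.} For a Gaussian $u$: by Jensen applied to an independent copy $u'$, $\E\norm{M-N}\le\E\norm{\mathcal Q(uu^T)-\mathcal Q(u'u'^T)}$ where $\mathcal Q(X):=\sum_{a,b}X_{ab}Q_{ab}$; writing $v=(u+u')/\sqrt2$ and $w=(u-u')/\sqrt2$ (again independent standard Gaussians) gives $uu^T-u'u'^T=vw^T+wv^T$, and since $\mathcal Q(wv^T)=\mathcal Q(vw^T)^*$ we get $\E\norm{M-N}\le2\,\E\norm{S}$ with $S:=\mathcal Q(vw^T)=L(v)^*L(w)$, $L(v):=\sum_av_aW^{(a)}$. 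For a Rademacher $u$: drop the (zero) diagonal sum and apply the standard decoupling inequality for second-order Rademacher chaos, which again yields $\E\norm{M-N}\le C\,\E\norm{S}$ with $S=L(v)^*L(w)$ and $v,w$ independent copies of $u$. In either case it remains to bound $\E\norm{S}$, $S=\sum_{a,b}v_aw_b\,Q_{ab}$.

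\emph{The bilinear chaos.} I would estimate $\E\norm{S}$ by computing the Schatten $2q$-th moment $\E\,\tr\big((SS^*)^{q}\big)$ for $q\asymp\log n$ (a non-commutative Khintchine inequality for decoupled chaos packages the same conclusion). Taking the expectation over $v$ and $w$, the dominant ``planar'' contractions produce $\big\|\sum_{a,b}Q_{ab}Q_{ab}^*\big\|^{q}$, so $\E\norm{S}\le C\big(\sqrt{\log n}\,\sigma+(\log n)\norm{\tilde N}\big)$ with $\sigma^2:=\big\|\sum_{a,b}Q_{ab}Q_{ab}^*\big\|$ and $\tilde N:=\sum_aW^{(a)}(W^{(a)})^*=\sum_jB_jB_j^*$. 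Both are then controlled by $\lambda=\lambda(\mathcal B)$: since $Q_{ab}^*=Q_{ba}$, $\sum_{a,b}Q_{ab}Q_{ab}^*=\sum_a(W^{(a)})^*\tilde N\,W^{(a)}\preceq\norm{\tilde N}\sum_a(W^{(a)})^*W^{(a)}=\norm{\tilde N}\,N$, hence $\sigma^2\le\norm{\tilde N}\norm{N}$; and $\norm{\tilde N}\le\sum_j\norm{B_j}^2=\sum_j\lambda(B_j)^2\norm{B_j}_F^2/n\le(\lambda^2/n)\tr(N)\le(p\lambda^2/n)\norm{N}$. Thus $\sigma^2\le(p\lambda^2/n)\norm{N}^2$ and $\E\norm{S}\le C(\log n)\norm{N}(p/n)^{1/2}\lambda$, the $\norm{\tilde N}\log n$ remainder being of lower order under the hypotheses (in particular $n\ge p\lambda^2$); combined with the decoupling step this is the claimed bound. (The same estimate $\norm{B_j}\le\lambda\norm{B_j}_Fn^{-1/2}$ also gives $\norm{W^{(a)}}^2\le\sum_j\norm{v_j^{(a)}}^2\le(\lambda^2/n)\tr(N)$, which is what is needed if one instead peels the Gaussian diagonal sum off and treats it by matrix Bernstein, its summands being independent in $a$.)

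\emph{The main obstacle.} The difficulty lies entirely in the bilinear chaos, and it is why a one-shot argument is insufficient. If one merely conditions on $w$ and applies the ordinary matrix Khintchine inequality to $S=\sum_av_aZ_a$ with $Z_a=(W^{(a)})^*L(w)$, the relevant variance is $\max\{\norm{\sum_aZ_aZ_a^*},\norm{\sum_aZ_a^*Z_a}\}$, and the first term equals $\norm{\sum_a(W^{(a)})^*L(w)L(w)^*W^{(a)}}$, which is intrinsically of size $\asymp\norm{N}^2$ rather than $\lambda^2(p/n)\norm{N}^2$, destroying the whole $(p/n)^{1/2}\lambda$ gain. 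One must exploit the randomness of \emph{both} copies $v$ and $w$ at once --- exactly what the moment expansion does: the structure $Q_{ab}^*=Q_{ba}$ together with $\sum_bW^{(b)}(W^{(b)})^*=\tilde N$ forces the small variance $\norm{\tilde N}\norm{N}$. The rest is bookkeeping: verifying that the non-planar contractions and the remainder term are genuinely lower order when $n=\tilde{O}(p)$, and tracking the absolute constant (the source of the constant $C$ here and, ultimately, of the $C_1,C_2$ in Theorem~\ref{thm:main}). Finally $p\le n$ gives $\sqrt{\log p}\le\log n$, so taking $q\asymp\log n$ --- the exponent one needs anyway for the tail bound of Theorem~\ref{thm:main} --- produces the single power of $\log n$ in the statement.
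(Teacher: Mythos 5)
Your proposal is correct and follows essentially the same route as the paper: write $M-N$ as an order-2 chaos in the $p\times p$ blocks $Q_{ab}$ (the paper's $T_{\xi\eta}$), decouple, bound the decoupled chaos by a noncommutative Khintchine/moment estimate at Schatten exponent $\asymp\log n$, and control the variance quantities exactly as in Propositions \ref{thm:QBSB}--\ref{thm:FN} (your $\sigma^2\le\norm{\tilde N}\norm{N}$ and $\norm{\tilde N}\le p\lambda^2\norm{N}/n$ are the paper's $\norm{Q}\le\norm{S}\norm{N}$ and $\norm{S}\le p\lambda^2\norm{N}/n$, and your $\norm{\tilde N}$ term is precisely the block-matrix term $\norm{F}=\norm{G}$). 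The only deviations are cosmetic: the elementary rotation trick in place of citing Arcones--Gin\'e for Gaussian decoupling, and the sharper (unproven, but unneeded) $\sqrt{\log n}\,\sigma$ Bernstein-type refinement, since the standard decoupled Khintchine bound (Theorem \ref{thm:khintchinetwo}) with $s=\log n$ already yields the stated inequality.
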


A numerical experiment in Section \ref{sec:1Dstatstudy} suggests that the relationship between $p$ and $n$ is not tight in the $\log$ factor. Our experiment show that for $\E \norm{M-N}/\norm{N}$ to vanish as $p\rightarrow \infty$, $n$ just needs to increase faster than $p \log (np)$, whereas Theorem \ref{thm:mainweak} requires $n$ to grow faster than $p \log^2 n$.

Next, we see that when $L$ is well-conditioned, the error in the reconstruction is also small.
\begin{proposition}\label{thm:accuracy}
Assume the same set-up as in Theorem \ref{thm:main}. Suppose $A=  \sum_{j=1}^p d_j B_j + E$ where $\norm{E}\leq \eps$ and assume whp,
\begin{equation*}\label{eq:accuracyassume}
\norm{M-N}\leq \frac{t\norm{N}}{\kappa} \mbox{ for some }0<t<1.
\end{equation*}
Let $c=L^+ A u$ be the recovered coefficients. Then whp,
$$\norm{A-\sum_{j=1}^p c_j B_j}\leq O\left(\eps \lambda \left(\frac{\kappa p}{1-t}\right)^{1/2}\right).$$
\end{proposition}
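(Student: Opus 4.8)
The plan is to compare the recovered coefficients $c = L^+ Au$ with the ``true'' coefficients $d = (d_1,\ldots,d_p)^T$ coming from the decomposition $A = \sum_j d_j B_j + E$, and then translate a bound on $\norm{c-d}$ back to a bound on $\norm{A - \sum_j c_j B_j}$ using the Gram matrix $N$. First I would write $Au = Ld + Eu$, so that $c = L^+ Ld + L^+ Eu$. Under the event $\norm{M-N}\le t\norm{N}/\kappa$, the matrix $M = L^*L$ is invertible (its smallest eigenvalue is at least $\norm{N}/\kappa - t\norm{N}/\kappa = (1-t)\sigma_{\min}(N) > 0$ since $\norm{N}/\kappa = \sigma_{\min}(N)$), hence $L$ has full column rank $p$ and $L^+ L = I_p$. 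Therefore $c - d = L^+ E u = M^{-1} L^* E u$, and $\norm{c-d} \le \norm{M^{-1}}\,\norm{L^*}\,\norm{E}\,\norm{u} \le \sigma_{\min}(M)^{-1}\,\norm{L}\,\eps\,\norm{u}$.

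The next step is to control each of the remaining factors. We have $\sigma_{\min}(M)^{-1} \le \big((1-t)\sigma_{\min}(N)\big)^{-1}$ from the argument above, and $\norm{L} = \sigma_{\max}(M)^{1/2} \le \big(\norm{N} + t\norm{N}/\kappa\big)^{1/2} \le (2\norm{N})^{1/2}$. For $\norm{u}$, since $u$ is a Gaussian or Rademacher sequence of length $n$, standard concentration gives $\norm{u} \le O(n^{1/2})$ with high probability (indeed $\E\norm{u}^2 = n$, and $\norm{u}^2$ concentrates sharply). Combining, whp
$$\norm{c-d} \le O\!\left(\frac{\eps\, n^{1/2}\,\norm{N}^{1/2}}{(1-t)\,\sigma_{\min}(N)}\right) = O\!\left(\frac{\eps\, n^{1/2}}{1-t}\left(\frac{\kappa}{\norm{N}}\right)^{1/2}\right),$$
using $\sigma_{\min}(N) = \norm{N}/\kappa$.

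Finally I would pass from coefficient error to operator error. Write $A - \sum_j c_j B_j = \sum_j (d_j - c_j) B_j + E$. For the first term, $\norm{\sum_j (d_j-c_j)B_j} \le \norm{\sum_j (d_j-c_j)B_j}_F$, and expanding the Frobenius norm, $\norm{\sum_j a_j B_j}_F^2 = a^* N a \le \norm{N}\,\norm{a}^2$. Hence $\norm{\sum_j(d_j-c_j)B_j} \le \norm{N}^{1/2}\norm{c-d}$. Adding the $\norm{E}\le\eps$ term (which is lower order) gives, whp,
$$\norm{A - \sum_j c_j B_j} \le \norm{N}^{1/2}\norm{c-d} + \eps \le O\!\left(\frac{\eps\, n^{1/2}\,\kappa^{1/2}}{(1-t)^{1/2}}\right) + \eps.$$
To reach the stated form $O\big(\eps\lambda(\kappa p/(1-t))^{1/2}\big)$, I would absorb the $n^{1/2}$ against $\norm{N}^{1/2}$ differently: note $\norm{N} = \norm{L}^2$-type quantities are of size comparable to $\lambda^2 p / n$ times a normalization — more carefully, one should track that $\norm{N} \le \sum_j \norm{B_j}_F^2 / \!\ldots$ hmm, rather use $\norm{N}^{1/2}\norm{c-d} \le \norm{N}^{1/2}\cdot \sigma_{\min}(M)^{-1}\norm{L}\eps\norm{u}$ and bound $\norm{L}\,\norm{N}^{1/2}/\sigma_{\min}(M) \le (2\norm{N})^{1/2}(\kappa/\norm{N})^{1/2}/(1-t)^{1/2} \cdot \norm{N}^{1/2}$, together with $\norm{u}\le O(n^{1/2})$ and the normalization $\norm{B_j}_F^2 = n\norm{B_j}^2/\lambda(B_j)^2$ hidden in $\norm{N}$ via $\norm{N}\le \|\,\|$ — the bookkeeping here is exactly where the $\lambda$ and $p$ enter, and this is the step I expect to be the main obstacle: getting the dimensional factors $n$, $p$, $\lambda$, and $\norm{N}$ to combine into precisely $\lambda(\kappa p/(1-t))^{1/2}$ rather than merely a bound of the same flavor. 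Everything else is routine: invertibility of $M$ on the good event, the deterministic inequality $\norm{\sum a_j B_j}_F^2 = a^*Na$, and Gaussian/Rademacher norm concentration for $\norm{u}$.
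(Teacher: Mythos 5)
Your opening is the same as the paper's (write $Au = Ld + Eu$, so $c-d = L^{+}Eu$ on the event where $M$ is invertible, and use $\norm{u}=O(n^{1/2})$ whp), but the proof as written does not reach the stated bound, and the step you flag as "bookkeeping" is a genuine missing idea. Your conversion from coefficient error to operator error via the Frobenius norm, $\norm{\sum_j a_j B_j}\leq (a^*Na)^{1/2}\leq \norm{N}^{1/2}\norm{a}$, is too lossy: combined with $\norm{u}=O(n^{1/2})$ it yields $\norm{A-\sum_j c_jB_j}\leq \eps + O\bigl(\eps\, n^{1/2}\kappa/(1-t)\bigr)$, which is strictly weaker than the claimed $O\bigl(\eps\lambda(\kappa p/(1-t))^{1/2}\bigr)$; indeed under the hypothesis $n\geq p(C\kappa\lambda\log n)^2$ one has $\lambda p^{1/2}\ll n^{1/2}$, so your bound does not imply the proposition. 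The paper instead bounds the spectral norm directly: for a unit vector $v$, let $L'$ be the $n\times p$ matrix whose $j$-th column is $B_jv$, so that $\norm{\sum_j(d_j-c_j)B_jv}\leq\norm{L'}\norm{d-c}$, and then $\norm{L'}^2\leq\sum_j\norm{B_j}^2\leq p\lambda^2\norm{N}/n$ by Proposition \ref{thm:BN} ($\norm{B_j}\leq\lambda n^{-1/2}\norm{N}^{1/2}$). The factor $(p/n)^{1/2}$ coming from $\norm{L'}$ is exactly what cancels the $n^{1/2}$ from $\norm{u}$ and produces $\lambda p^{1/2}$; this use of the high numerical rank of the $B_j$'s is the ingredient your argument lacks.

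A secondary loss: you bound $\norm{L^{+}Eu}\leq\norm{M^{-1}}\,\norm{L^*}\,\eps\,\norm{u}$, which costs an extra factor of $\cond(L)\leq(2\kappa/(1-t))^{1/2}$ compared with the direct estimate $\norm{L^{+}}=\sigma_{\min}(L)^{-1}=\sigma_{\min}(M)^{-1/2}\leq\bigl(\kappa/((1-t)\norm{N})\bigr)^{1/2}$ used in the paper; this is why your intermediate bound carries $\kappa/(1-t)$ instead of $(\kappa/(1-t))^{1/2}$. Both corrections --- the direct $\sigma_{\min}(L)^{-1}$ bound and the $\norm{L'}\leq\lambda\norm{N}^{1/2}(p/n)^{1/2}$ estimate --- are needed to land on the stated rate.
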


If $\eps=o(p^{-1/2})$, then the proposition guarantees that the overall error goes to zero as $p\rightarrow \infty$. Of course, a larger $n$ and more computational effort are required.

\subsection{Multiple probes}\label{sec:multipleprobes}
Fix $n$ and suppose $p>n$. $L$ is not going to be well-conditioned or even invertible. One way around this is to probe $A$ with multiple random vectors $u_1,\ldots,u_q\in \cplexes^n$ at one go, that is to solve
$$L' c = A' u,$$
where the $j$-th column of $L'$ and $A'u$ are respectively
$$\left(\begin{array}{c}
B_j u_1 \\
\vdots\\
 B_j u_q
\end{array}\right) \mbox{ and }
\left(\begin{array}{c}
A u_1 \\
\vdots\\
 A u_q
\end{array}\right) 
.$$

For this to make sense, $A'=I_{q} \otimes A$ where $I_{q}$ is the identity matrix of size $q$. Also define $B'_j = I_{q} \otimes B_j$ and treat the above as probing $A'$ assuming that it lies in the span of $\mathcal{B}'=\{B'_1,\ldots,B'_p\}$.

Regarding the conditioning of $L'$, we can apply Theorem \ref{thm:main} to $A'$ and $\mathcal{B}'$. It is an easy exercise (cf. Proposition \ref{thm:condunchanged}) to see that the condition numbers are unchanged, that is $\kappa(\mathcal{B})=\kappa(\mathcal{B}')$ and $\lambda(\mathcal{B})=\lambda(\mathcal{B}')$. Applying Theorem \ref{thm:main} to $A'$ and $\mathcal{B}'$, we deduce that $\cond(L)\leq 2\kappa+1$ with high probability provided that
$$n q \propto p( \kappa \lambda \log n)^2.$$

Remember that $A$ has only $mn$ degrees of freedom; while we can increase $q$ as much as we like to improve the conditioning of $L$, the problem set-up does not allow $p>mn$ coefficients. In general, when $A$ has rank $\tilde{n}$, its degrees of freedom is $\tilde{n}(m+n-\tilde{n})$ by considering its SVD.

\subsection{When to probe}\label{sec:motivation}
Matrix probing is an especially useful technique when the following holds.

\begin{enumerate}
\item We know that the probed matrix $A$ can be approximated by a small number of basis matrices that are specified in advance. This holds for operators with smooth pseudodifferential symbols, which will be studied in Section \ref{sec:pdos}.
\item Each matrix $B_i$ can be applied to a vector in $\tilde{O}(\max(m,n))$  time using only $\tilde{O}(\max(m,n))$ memory.
\end{enumerate}

The second condition confers two benefits. First, the coefficients $c$ can be recovered fast, assuming that $u$ and $Au$ are already provided. This is because $L$ can be computed in $\tilde{O}(\max(m,n)p)$ time and Equation (\ref{eq:tosolve}) can be solved in $O(m p^2+p^3)$ time by QR factorization or other methods. In the case where increasing $m,n$ does not require a bigger $\mathcal{B}$ to approximate $A$, $p$ can be treated as a constant and the recovery of $c$ takes only $\tilde{O}(\max(m,n))$ time.

Second, given the coefficient vector $c$, $A$ can be applied to any vector $v$ by summing over $B_i v$'s in $\tilde{O}(\max(m,n)p)$ time . This speeds up iterative methods such as GMRES and Arnoldi.

\subsection{Backward matrix probing}\label{sec:probeinv}

A compelling application of matrix probing is computing the pseudoinverse $A^{+}$ of a matrix $A\in \cplexes^{m\times n}$ when $A^{+}$ is known to be well-approximated in the space of some $\mathcal{B}=\{B_1,\ldots,B_p\}$. This time, we probe $A^{+}$ by applying it to a random vector $v=Au$ where $u$ is a Gaussian or Rademacher sequence that we generate.

Like in Section \ref{sec:basic}, define $L\in \cplexes^{n\times p}$ such that its $j$-th column is $B_j v = B_j A u$. Suppose $A^{+} = \sum_{i=1}^p c_i B_i \mbox{ for some } c_1,\ldots, c_p \in \cplexes$. Then the coefficient vector $c$ can be obtained by solving
\begin{equation}\label{eq:tosolve2}
Lc = A^{+} v = A^{+} A u.
\end{equation}

The right hand side is $u$ projected onto $\nullspace(A)^{\perp}$ where $\nullspace(A)$ is the nullspace of $A$. When $A$ is invertible, $A^{+} A u$ is simply $u$. We call this ``backward matrix probing'' because the generated random vector $u$ appears on the opposite side of the matrix being probed in Equation (\ref{eq:tosolve2}). The equation suggests the following framework for probing $A^{+}$.

\begin{algorithm}[Backward matrix probing]\label{alg:probeinv}
Suppose $A^{+}=\sum_{i=1}^p c_i B_i$. The goal is to retrieve the coefficients $c_1,\ldots,c_p$.
\begin{enumerate}
\item Generate $u\sim N(0,1)^n$ iid.
\item Compute $v=Au$.
\item Filter away $u$'s components in $\nullspace(A)$. Call this $\tilde{u}$.
\item Compute $L$ by setting its $j$-column to $B_j v$.
\item Solve for $c$ the system $Lc=\tilde{u}$ in a least squares sense.
\end{enumerate}
\end{algorithm}

In order to perform the filtering in Step 3 efficiently, prior knowledge of $A$ may be needed. For example, if $A$ is the Laplacian with periodic boundary conditions, its nullspace is the set of constant functions and Step 3 amounts to subtracting the mean from $u$. A more involved example can be found in \cite{demanet2011matrix}. In this paper, we invert the wave equation Hessian, and Step 3 entails building an illumination mask. Further comments on \cite{demanet2011matrix} are located in Section \ref{sec:hessian} of this paper.

For the conditioning of $L$, we may apply Theorem \ref{thm:main} with $\mathcal{B}$ replaced with $\mathcal{B}_A:=\{B_1 A,\ldots,B_p A\}$ since the $j$-th column of $L$ is now $B_j A u$. Of course, $\kappa(\mathcal{B}_A)$ and $\lambda(\mathcal{B}_A)$ can be very different from $\kappa(\mathcal{B})$ and $\lambda(\mathcal{B})$; in fact, $\kappa({\mathcal{B}}_A)$ and $\lambda({\mathcal{B}}_A)$ seem much harder to control because it depends on $A$. Fortunately, as we shall see in Section \ref{sec:order}, knowing the ``order'' of $A^{+}$ as a pseudodifferential operator helps in keeping these condition numbers small.

When $A$ has a high dimensional nullspace but has comparable nonzero singular values, $\lambda(\mathcal{B}_A)$ may be much larger than is necessary. By a change of basis, we can obtain the following tighter result.
\begin{corollary}\label{thm:main2}
Let $C_1,C_2>0$ be numbers given by Remark \ref{remark:C1C2} in the Appendix.
Let $A\in \cplexes^{m\times n}$, $\tilde{n}=\rank(A)$ and $\mathcal{B}_A=\{B_1 A,\ldots,B_p A\}$ where each $B_j\in \cplexes^{n\times m}$. Define $L\in \cplexes^{n\times p}$ such that its $j$-th column is $B_j A u$ where $u\sim N(0,1)^n$ iid. Let $M=L^* L$, $N=\E M$, $\kappa=\kappa(\mathcal{B}_A)$ and $\lambda = (\tilde{n}/n)^{1/2}\lambda(\mathcal{B}_A)$. Suppose
$$\tilde{n}\geq p\left(C \kappa  \lambda \log \tilde{n}\right)^2 \mbox{ for some }C \geq 1.$$
Then
$$\P{\norm{M-N}\geq \frac{t\norm{N}}{\kappa}}\leq (2 C_2 p) \tilde{n}^{1-\alpha} \mbox{ where } \alpha=\frac{t C}{e C_1}.$$
\end{corollary}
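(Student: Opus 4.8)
The plan is to deduce Corollary \ref{thm:main2} from Theorem \ref{thm:main} by a change of basis that trades the $n$-dimensional probe $u$ for a $\tilde{n}$-dimensional one; the factor $(\tilde{n}/n)^{1/2}$ in the definition of $\lambda$ will be exactly the cost of this trade. I would write a thin singular value decomposition $A = U\Sigma V^*$, with $U\in\cplexes^{m\times\tilde{n}}$ and $V\in\cplexes^{n\times\tilde{n}}$ having orthonormal columns and $\Sigma\in\cplexes^{\tilde{n}\times\tilde{n}}$ diagonal, and then set $w := V^* u$ and $\widetilde{B}_j := B_j U\Sigma\in\cplexes^{n\times\tilde{n}}$, so that the $j$-th column of $L$ becomes
$$B_j A u = B_j U\Sigma V^* u = \widetilde{B}_j w.$$
Hence $L$ is exactly the forward-probing matrix of the collection $\widetilde{\mathcal{B}} := \{\widetilde{B}_1,\ldots,\widetilde{B}_p\}$ in ambient dimension $\tilde{n}$, probed with $w$. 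When $A$ is real --- the case relevant to the applications here --- $V$ may be taken real and $w = V^\top u\sim N(0,1)^{\tilde{n}}$ is genuinely a standard Gaussian sequence, so Theorem \ref{thm:main} applies to $(\widetilde{\mathcal{B}},w)$ verbatim; for complex $A$ one runs the identical reduction after passing to real and imaginary parts.

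Next I would check that the quantities fed into Theorem \ref{thm:main} are the ones claimed. Since $V^* V = I_{\tilde{n}}$, right multiplication by $V^*$ preserves both the spectral and the Frobenius norm, so
$$\norm{\widetilde{B}_j} = \norm{\widetilde{B}_j V^*} = \norm{B_j A},\qquad \norm{\widetilde{B}_j}_F = \norm{\widetilde{B}_j V^*}_F = \norm{B_j A}_F.$$
Because $\widetilde{B}_j$ has $\tilde{n}$ columns whereas $B_j A$ has $n$, Definition \ref{def:weakcondition} gives $\lambda(\widetilde{B}_j) = (\tilde{n}/n)^{1/2}\lambda(B_j A)$, hence $\lambda(\widetilde{\mathcal{B}}) = (\tilde{n}/n)^{1/2}\lambda(\mathcal{B}_A) = \lambda$. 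Likewise, using $V^* V = I_{\tilde{n}}$ and cyclicity of the trace,
$$\inner{\widetilde{B}_j}{\widetilde{B}_k} = \tr(\Sigma U^* B_j^* B_k U\Sigma) = \tr(A^* B_j^* B_k A) = \inner{B_j A}{B_k A},$$
so the Gram matrix of $\widetilde{\mathcal{B}}$ coincides with that of $\mathcal{B}_A$; in particular $N = \E M$ is unchanged and $\kappa(\widetilde{\mathcal{B}}) = \kappa(\mathcal{B}_A) = \kappa$. Applying Theorem \ref{thm:main} to $(\widetilde{\mathcal{B}},w)$ with $n$ replaced by $\tilde{n}$, the hypothesis $\tilde{n}\geq p(C\kappa\lambda\log\tilde{n})^2$ is exactly the one assumed, and the conclusion $\P{\norm{M-N}\geq t\norm{N}/\kappa}\leq 2C_2 p\,\tilde{n}^{1-\alpha}$ with $\alpha = tC/(eC_1)$ is precisely the assertion of the corollary.

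I do not expect a genuine obstacle here: the whole argument is the observation that probing $B_j A$ with an $n$-dimensional Gaussian is the same as probing $B_j U\Sigma$ with a $\tilde{n}$-dimensional one, together with the two short invariance computations above. The only thing demanding care is bookkeeping --- keeping track of whether a given norm or instance of $\lambda$ refers to ambient dimension $n$ or $\tilde{n}$ --- and observing that the improvement over a naive application of Theorem \ref{thm:main} to $\mathcal{B}_A$ comes entirely from replacing $\lambda(\mathcal{B}_A)$ by the possibly much smaller $(\tilde{n}/n)^{1/2}\lambda(\mathcal{B}_A)$, which correctly reflects that the effective randomness in $B_j A u$ lives in only $\tilde{n} = \rank(A)$ dimensions rather than $n$.
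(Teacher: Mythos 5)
Your proposal is correct and follows essentially the same route as the paper: a change of basis via the SVD of $A$, the observation that $\kappa$ and the Gram matrix are unchanged while $\lambda$ picks up the factor $(\tilde{n}/n)^{1/2}$ because the effective matrices have $\tilde{n}$ rather than $n$ columns, and then an application of Theorem \ref{thm:main} in dimension $\tilde{n}$. The only cosmetic difference is that you use the thin SVD $A=U\Sigma V^*$ directly, whereas the paper rotates by the full unitaries and then chops the $n-\tilde{n}$ zero columns; your explicit remark on the realness of $V$ (so that $V^*u$ is again a standard Gaussian) is a point the paper glosses over.
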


Notice that $\tilde{n}=\rank(A)$ has taken the role of $n$, and our $\lambda$ is now $\max_{1\leq j\leq p}\frac{\tilde{n}^{1/2} \norm{B_j A}}{\norm{B_j A}_F}$, which ignores the $n-\tilde{n}$ zero singular values of each $B_j A$ and can be much smaller than $\lambda(\mathcal{B}_A)$.

\section{Proofs}\label{sec:proof}

\subsection{Proof of Theorem \ref{thm:main}}
Our proof is decoupled into two components: one linear algebraic and one probabilistic. The plan is to collect all the results that are linear algebraic, deterministic in nature, then appeal to a probabilistic result developed in the Appendix.

To facilitate the exposition, we use a different notation for this section. We use lower case letters as \emph{superscripts} that run from 1 to $p$ and Greek symbols as subscripts that run from 1 to $n$ or $m$. For example, the set of basis matrices is now $\mathcal{B}=\{B^1,\ldots,B^p\}$.

Our linear algebraic results concern the following variables.
\begin{enumerate}
\item Let $T^{jk} = {B^{j}}^* B^k\in \cplexes^{n \times n}$ and $T_{\xi \eta}\in \cplexes^{p \times p}$ such that the $(j,k)$-th entry of $T_{\xi \eta}$ is the $(\xi,\eta)$-th entry of $T^{jk}$.
\item Let $Q=\sum_{1\leq \xi ,\eta\leq n} T_{\xi \eta}^* T_{\xi \eta}$.
\item Let $S=\sum_{j=1}^p B^j {B^j}^* \in \cplexes^{m \times m}$.
\item Let $F$ and $G$ be block matrices $(T_{\xi \eta})_{1\leq \xi,\eta\leq n}$ and $(T_{\xi \eta}^*)_{1\leq \xi,\eta\leq n}$ respectively.
\end{enumerate}

The reason for introducing $T$ is that $M$ can be written as a quadratic form in $T_{\xi \eta}$ with input $u$:
\begin{equation*}
M=\sum_{1\leq \xi, \eta\leq n} u_{\xi} u_{\eta} T_{\xi \eta}.
\end{equation*}

Since $u_{\xi}$ has unit variance and zero mean, $N=\E M = \sum_{\xi=1}^n T_{\xi \xi}$.

Probabilistic inequalties applied to $M$ will involve $T_{\xi \eta}$, which must be related to $\mathcal{B}$. The connection between these $n$ by $n$ matrices and $p$ by $p$ matrices lies in the identity
\begin{equation}\label{eq:Telement}
T_{\xi \eta}^{jk} = \sum_{\zeta=1}^m \overline{B^j_{\zeta \xi}} B^k_{\zeta \eta}.
\end{equation}

The linear algebraic results are contained in the following propositions.
\begin{proposition}\label{thm:Txixipositive}
For any $1\leq \xi,\eta \leq n$,$$T_{\xi \eta}=T_{\eta \xi}^*.$$
Hence, $T_{\xi \xi},N$ are all Hermitian. Moreover, they are positive semidefinite.
\end{proposition}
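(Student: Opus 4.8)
The plan is a direct entrywise verification from the definitions; there is no genuine obstacle here beyond bookkeeping of complex conjugates and of the two kinds of indices.

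For the identity $T_{\xi\eta}=T_{\eta\xi}^*$ I would argue entrywise using (\ref{eq:Telement}). Fix $\xi,\eta$ and $1\le j,k\le p$; then
$$
(T_{\eta\xi}^*)^{jk}=\overline{T_{\eta\xi}^{kj}}=\overline{\sum_{\zeta=1}^m \overline{B^k_{\zeta\eta}}\,B^j_{\zeta\xi}}=\sum_{\zeta=1}^m B^k_{\zeta\eta}\,\overline{B^j_{\zeta\xi}}=\sum_{\zeta=1}^m \overline{B^j_{\zeta\xi}}\,B^k_{\zeta\eta}=T_{\xi\eta}^{jk},
$$
where the second and last equalities are (\ref{eq:Telement}). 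Since $j,k$ were arbitrary, $T_{\xi\eta}=T_{\eta\xi}^*$. Setting $\eta=\xi$ gives $T_{\xi\xi}=T_{\xi\xi}^*$, so each $T_{\xi\xi}$ is Hermitian; and $N=\sum_{\xi=1}^n T_{\xi\xi}$ is then Hermitian as a sum of Hermitian matrices.

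For positive semidefiniteness I would recognize $T_{\xi\xi}$ as a Gram matrix: writing $b^j_\xi\in\cplexes^m$ for the $\xi$-th column of $B^j$, (\ref{eq:Telement}) reads $T_{\xi\xi}^{jk}=\sum_{\zeta=1}^m \overline{B^j_{\zeta\xi}}\,B^k_{\zeta\xi}$, the Euclidean inner product of $b^j_\xi$ with $b^k_\xi$. Hence, for any $w\in\cplexes^p$,
$$
w^* T_{\xi\xi}\,w=\sum_{j,k=1}^p \overline{w_j}\,w_k\sum_{\zeta=1}^m \overline{B^j_{\zeta\xi}}\,B^k_{\zeta\xi}=\sum_{\zeta=1}^m\Bigl|\,\sum_{k=1}^p w_k\,B^k_{\zeta\xi}\,\Bigr|^{2}\ge 0,
$$
so $T_{\xi\xi}$ is positive semidefinite, and $N=\sum_{\xi=1}^n T_{\xi\xi}$ is positive semidefinite as a sum of such matrices. (Equivalently, one may plug the deterministic input equal to the $\xi$-th coordinate vector into the algebraic identity $M=\sum_{\xi,\eta}u_\xi u_\eta T_{\xi\eta}$ --- which holds for any $u$ --- to get $T_{\xi\xi}=L^{*}L$ for the corresponding $L$, which is visibly Hermitian positive semidefinite; summing over $\xi$ handles $N$.) The only step requiring care is the conjugation in the first display; everything else is immediate, so I expect no real difficulty.
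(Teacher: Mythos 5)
Your proof is correct and follows essentially the same route as the paper: the adjoint identity is checked entrywise from Equation (\ref{eq:Telement}) (the paper simply calls this step straightforward), and positive semidefiniteness of $T_{\xi\xi}$ is obtained from exactly the same quadratic-form computation $w^*T_{\xi\xi}w=\sum_{\zeta}\bigl|\sum_k w_k B^k_{\zeta\xi}\bigr|^2\ge 0$, with $N$ handled as a sum of such matrices. The parenthetical Gram-matrix remark is a fine alternative but adds nothing beyond the paper's argument.
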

\begin{proof}
Showing that $T_{\xi \eta} = T_{\eta \xi}^*$ is straightforward from Equation (\ref{eq:Telement}). We now check that $T_{\xi \xi}$ is positive semidefinite. Let $v\in \cplexes^p$. By Equation (\ref{eq:Telement}), $v^* T_{\xi \xi} v=\sum_{\zeta} \sum_{jk} \overline{v^j} v^k \overline{B^j_{\zeta \xi}} B^k_{\zeta \xi}=\sum_{\zeta}\abs{\sum_k v^k B^k_{\zeta \xi}}^2 \geq 0$. It follows that $N=\sum_{\xi}T_{\xi\xi}$ is also positive semidefinite.
\end{proof}

\begin{proposition}\label{thm:QBSB}
$$Q^{jk} = \tr({B^j}^* S B^k) \mbox{ and } Q=\sum_{1\leq \xi, \eta \leq n} T_{\xi \eta} T_{\xi \eta}^* .$$
\end{proposition}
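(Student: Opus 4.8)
The plan is a direct computation from the entrywise identity (\ref{eq:Telement}), carried out in two parts corresponding to the two claimed formulas.

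First I would prove the trace formula for $Q^{jk}$. By definition $Q = \sum_{\xi,\eta} T_{\xi\eta}^* T_{\xi\eta}$, so
$$Q^{jk} = \sum_{\xi,\eta} \sum_{\ell} \overline{T_{\xi\eta}^{\ell j}}\, T_{\xi\eta}^{\ell k}.$$
Now substitute (\ref{eq:Telement}) for each factor: $T_{\xi\eta}^{\ell j} = \sum_{\zeta} \overline{B^\ell_{\zeta\xi}} B^j_{\zeta\eta}$ and $T_{\xi\eta}^{\ell k} = \sum_{\zeta'} \overline{B^\ell_{\zeta'\xi}} B^k_{\zeta'\eta}$. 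Conjugating the first and multiplying, one gets a quadruple sum over $\xi,\eta,\zeta,\zeta'$ (and the free index $\ell$); collecting the $\xi$-sum gives $\sum_\xi B^\ell_{\zeta\xi}\overline{B^\ell_{\zeta'\xi}} = (B^\ell {B^\ell}^*)_{\zeta\zeta'}$, and summing over $\ell$ produces $S_{\zeta\zeta'}$. What remains is $\sum_{\zeta,\zeta',\eta} \overline{B^j_{\zeta\eta}}\, S_{\zeta\zeta'}\, B^k_{\zeta'\eta}$, which one recognizes as $\sum_\eta ({B^j}^* S B^k)_{\eta\eta} = \tr({B^j}^* S B^k)$. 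The bookkeeping is routine; the only point requiring care is keeping the conjugates in the right places, since $u$ (and hence everything) is complex.

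Second, for the identity $Q = \sum_{\xi,\eta} T_{\xi\eta} T_{\xi\eta}^*$, I would invoke Proposition \ref{thm:Txixipositive}: since $T_{\xi\eta}^* = T_{\eta\xi}$, we have $T_{\xi\eta}^* T_{\xi\eta} = T_{\eta\xi} T_{\eta\xi}^*$, and reindexing the double sum by swapping the roles of $\xi$ and $\eta$ turns $\sum_{\xi,\eta} T_{\xi\eta}^* T_{\xi\eta}$ into $\sum_{\xi,\eta} T_{\xi\eta} T_{\xi\eta}^*$. This is immediate once Proposition \ref{thm:Txixipositive} is in hand.

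There is no real obstacle here — this is a lemma that unpacks definitions — but if I had to name the delicate step it is the index contraction that collapses the $\xi$-sum and the $\ell$-sum into $S$: one must be sure that the pairing of indices coming out of (\ref{eq:Telement}) is exactly $\sum_\xi B^\ell_{\zeta\xi}\overline{B^\ell_{\zeta'\xi}}$ (an outer product in the first index of $B^\ell$) and not its transpose, so that summing over $\ell$ really yields $S = \sum_\ell B^\ell {B^\ell}^*$ rather than $\sum_\ell {B^\ell}^* B^\ell$. Once that alignment is checked, both formulas drop out.
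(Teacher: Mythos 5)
Your proposal is correct and is essentially the paper's argument: the entrywise contraction you carry out from Equation (\ref{eq:Telement}) is exactly the paper's computation $Q^{jk}=\sum_{l}\inner{T^{lj}}{T^{lk}}=\sum_{l}\tr({B^j}^*B^l{B^l}^*B^k)=\tr({B^j}^*SB^k)$, just written out in indices, and your care about getting $S=\sum_l B^l{B^l}^*$ rather than $\sum_l {B^l}^*B^l$ is the right check. The only minor difference is the second identity, which you deduce from $T_{\xi\eta}^*=T_{\eta\xi}$ (Proposition \ref{thm:Txixipositive}) and a reindexing of the double sum, whereas the paper recomputes the $(j,k)$ entry of $\sum_{\xi\eta}T_{\xi\eta}T_{\xi\eta}^*$ and invokes cyclicity of the trace; both are valid.
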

\begin{proof}
By Equation (\ref{eq:Telement}), $Q^{jk} =\sum_{l} \inner{T^{lj}}{T^{lk}}=\sum_{l} \tr({B^j}^* B^l {B^l}^* B^k)$. The summation and trace commute to give us the first identity. Similarly, the $(j,k)$-th entry of $\sum_{\xi \eta} T_{\xi \eta} T_{\xi \eta}^*$ is $\sum_{l} \inner{T^{kl}}{T^{jl}}=\sum_{l} \tr({B^l}^* B^k {B^j}^* B^l)$. Cycle the terms in the trace to obtain $Q^{jk}$.
\end{proof}

\begin{proposition}\label{thm:Nestimate}
Let $u\in \cplexes^p$ be a unit vector. Define $U=\sum_{k=1}^p u^k B^k \in \cplexes^{m \times n}$. Then
$$\norm{U}_F^2 \leq \norm{N}.$$
\end{proposition}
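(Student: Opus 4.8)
The quantity $\norm{U}_F^2$ is a Frobenius norm, hence a trace: $\norm{U}_F^2 = \tr(U^* U) = \inner{U}{U}$. Expanding $U = \sum_k u^k B^k$ bilinearly gives $\norm{U}_F^2 = \sum_{j,k} \overline{u^j} u^k \inner{B^j}{B^k} = \sum_{j,k} \overline{u^j} u^k N_{jk} = u^* N u$, using the identity $N_{jk} = \inner{B^j}{B^k}$ from Equation~(\ref{eq:Njk}) — or equivalently $N = \sum_\xi T_{\xi\xi}$ together with $T_{\xi\xi}^{jk} = \sum_\zeta \overline{B^j_{\zeta\xi}} B^k_{\zeta\xi}$ from Equation~(\ref{eq:Telement}), which upon summing over $\xi$ yields exactly $\inner{B^j}{B^k}$. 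So the plan is: first rewrite $\norm{U}_F^2$ as the quadratic form $u^* N u$.

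Once that identification is made, the bound is immediate from the variational characterization of the spectral norm of a positive semidefinite matrix. By Proposition~\ref{thm:Txixipositive}, $N$ is Hermitian positive semidefinite, so $\norm{N}$ equals its largest eigenvalue, and for any unit vector $u$ we have $u^* N u \leq \lambda_{\max}(N) = \norm{N}$. Chaining the two steps gives $\norm{U}_F^2 = u^* N u \leq \norm{N}$, which is the claim.

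There is essentially no obstacle here; the only thing to be careful about is the bookkeeping in the bilinear expansion — making sure the conjugates land on the right indices so that the cross terms assemble into $u^* N u$ rather than $u^* \overline{N} u$ or $u^T N u$ — but since $N$ is Hermitian this ambiguity is harmless for the final inequality anyway. The proof is two lines: expand the Frobenius norm into $u^* N u$ via Equation~(\ref{eq:Njk}), then invoke positive semidefiniteness of $N$ from Proposition~\ref{thm:Txixipositive} to bound it by $\norm{N}$ since $\norm{u}=1$.
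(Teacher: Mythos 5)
Your proposal is correct and follows essentially the same route as the paper: expand $\norm{U}_F^2 = \tr(U^*U)$ into the quadratic form $\sum_{jk}\overline{u^j}u^k N^{jk} = u^*Nu$ via Equation~(\ref{eq:Njk}) and bound it by $\norm{N}$ for a unit vector $u$. The only cosmetic difference is that you explicitly cite the Hermitian positive semidefiniteness of $N$ from Proposition~\ref{thm:Txixipositive}, while the paper states the bound directly; both are fine.
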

\begin{proof}
$\norm{U}_F^2 = \tr(U^* U) = \tr(\sum_{jk} \overline{u^j} u^k {B^j}^* B^k)$. The sum and trace commute and due to Equation (\ref{eq:Njk}), $\norm{U}_F^2=\sum_{jk} \overline{u^j} u^k N^{jk}\leq \norm{N}$.
\end{proof}

\begin{proposition}\label{thm:QSN}
$$\norm{Q} \leq \norm{S} \norm{N}.$$
\end{proposition}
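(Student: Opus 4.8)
The plan is to exploit the fact that $Q$, being a sum of terms of the form $T_{\xi\eta}^* T_{\xi\eta}$, is Hermitian and positive semidefinite, so that $\norm{Q}=\max_{\norm{u}=1} u^* Q u$ with the maximum taken over unit vectors $u\in\cplexes^p$. The strategy is then to recognize this quadratic form as a trace involving the matrix $U=\sum_{k=1}^p u^k B^k$ from Proposition \ref{thm:Nestimate}, and to peel off a factor of $\norm{S}$ using the positive semidefiniteness of $S$.

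Concretely, first I would invoke the identity $Q^{jk}=\tr({B^j}^* S B^k)$ from Proposition \ref{thm:QBSB} to write, for a unit vector $u$,
$$u^* Q u = \sum_{j,k=1}^p \overline{u^j} u^k \,\tr({B^j}^* S B^k) = \tr\!\left(U^* S U\right),$$
interchanging the finite sum with the trace and using the definition of $U$. Next, rewriting $\tr(U^* S U)=\tr(S\,U U^*)$ via the cyclic property of the trace, and noting that $S=\sum_j B^j {B^j}^*$ is positive semidefinite (hence $S\preceq\norm{S} I$) while $UU^*$ is positive semidefinite, I would bound $\tr(S\,UU^*)\leq\norm{S}\,\tr(UU^*)=\norm{S}\,\norm{U}_F^2$. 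Finally, Proposition \ref{thm:Nestimate} gives $\norm{U}_F^2\leq\norm{N}$, and chaining the inequalities yields $\norm{Q}\leq\norm{S}\norm{N}$.

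There is no real obstacle here. The only point requiring a little care is that the reduction $\norm{Q}=\max_{\norm{u}=1} u^* Q u$ needs $Q$ to be Hermitian, which is immediate from the representation $Q=\sum_{\xi,\eta} T_{\xi\eta}^* T_{\xi\eta}$. The one genuinely substantive ingredient — the estimate $\norm{U}_F^2\leq\norm{N}$ — has already been isolated as Proposition \ref{thm:Nestimate}, so the present proposition is essentially a bookkeeping step combining that bound with the positive semidefiniteness of $S$.
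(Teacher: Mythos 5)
Your proposal is correct and follows essentially the same route as the paper: reduce $\norm{Q}$ to the quadratic form $u^*Qu=\tr(U^*SU)$ via Proposition \ref{thm:QBSB}, bound the $S$ factor by $\norm{S}$, and finish with Proposition \ref{thm:Nestimate}. The only cosmetic difference is that you bound $\tr(SUU^*)\leq\norm{S}\tr(UU^*)$ via $S\preceq\norm{S}I$, while the paper writes $\tr(U^*SU)=\norm{S^{1/2}U}_F^2\leq\norm{S}\norm{U}_F^2$; these are the same estimate.
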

\begin{proof}
$Q$ is Hermitian, so $\norm{Q}=\max_u u^* Q u$ where $u\in \cplexes^p$ has unit norm. Now let $u$ be an arbitrary unit vector and define $U=\sum_{k=1}^p u^k B^k$. By Proposition \ref{thm:QBSB}, $u^* Q u = \sum_{jk}\overline{u^j} u^k Q^{jk}=\tr(\sum_{jk} \overline{u^j} u^k {B^j}^* S B^k)=\tr(U^* S U)$. Since $S$ is positive definite, it follows from ``$\norm{AB}_F\leq \norm{A}\norm{B}_F$'' that $u^* Q u = \norm{S^{1/2} U}_F^2\leq \norm{S} \norm{U}_F^2$. By Proposition \ref{thm:Nestimate}, $u^* Q u \leq \norm{S} \norm{N}$.
\end{proof}

\begin{proposition}\label{thm:BN}
For any $1\leq j\leq p$,
$$\norm{B^j} \leq \lambda n^{-1/2} \norm{N}^{1/2}.$$
It follows that
$$\norm{Q}=\norm{\sum_{\xi \eta} T_{\xi \eta} T_{\xi \eta}^*}\leq p\lambda^2 \norm{N}^2/n.$$
\end{proposition}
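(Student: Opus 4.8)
The plan is to prove the two claims in order. For the first bound, I start from Definition \ref{def:weakcondition}: by definition of $\lambda = \lambda(\mathcal{B})$ we have $\norm{B^j} n^{1/2}/\norm{B^j}_F \leq \lambda$, hence $\norm{B^j} \leq \lambda n^{-1/2}\norm{B^j}_F$. It remains to bound $\norm{B^j}_F$ by $\norm{N}^{1/2}$, and this is exactly Proposition \ref{thm:Nestimate} applied to the unit vector $u = e_j$ (the $j$-th standard basis vector), for which $U = B^j$ and the proposition gives $\norm{B^j}_F^2 \leq \norm{N}$. Combining, $\norm{B^j} \leq \lambda n^{-1/2}\norm{N}^{1/2}$.

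For the second bound, I first recall from Proposition \ref{thm:QBSB} that $Q = \sum_{\xi\eta} T_{\xi\eta} T_{\xi\eta}^*$, so the displayed equality in the statement is just a restatement. To bound $\norm{Q}$, I combine Proposition \ref{thm:QSN}, which gives $\norm{Q} \leq \norm{S}\norm{N}$, with an estimate of $\norm{S}$ where $S = \sum_{j=1}^p B^j {B^j}^*$. By the triangle inequality $\norm{S} \leq \sum_{j=1}^p \norm{B^j {B^j}^*} = \sum_{j=1}^p \norm{B^j}^2$, and then applying the first part of the proposition to each term yields $\norm{S} \leq p \lambda^2 \norm{N}/n$. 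Feeding this into $\norm{Q} \leq \norm{S}\norm{N}$ gives $\norm{Q} \leq p\lambda^2 \norm{N}^2/n$, as claimed.

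There is essentially no obstacle here: the proposition is a short bookkeeping corollary that chains together Propositions \ref{thm:Nestimate}, \ref{thm:QBSB}, and \ref{thm:QSN} with the definition of $\lambda(\mathcal{B})$ and the sub-additivity of the spectral norm. The only point requiring a moment's care is the passage $\norm{S}\leq\sum_j\norm{B^j}^2$, which uses $\norm{B^j{B^j}^*} = \norm{B^j}^2$; everything else is immediate substitution. The role of this proposition in the larger argument is to supply the key quantity $\norm{Q}$ (controlling the variance-type term $\sum_{\xi\eta} T_{\xi\eta}T_{\xi\eta}^*$ that appears when one applies a matrix concentration inequality to $M = \sum_{\xi\eta} u_\xi u_\eta T_{\xi\eta}$) in terms of the two structural parameters $p/n$ and $\lambda$, which is precisely the shape needed to obtain the $(p/n)^{1/2}\lambda$ scaling in Theorem \ref{thm:mainweak}.
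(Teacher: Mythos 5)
Your proof is correct and follows essentially the same route as the paper: bound $\norm{B^j}_F^2$ by $\norm{N}$, combine with the definition of $\lambda$, then chain $\norm{Q}\leq\norm{S}\norm{N}$ with $\norm{S}\leq\sum_j\norm{B^j}^2$. The only cosmetic difference is that you obtain $\norm{B^j}_F^2\leq\norm{N}$ by invoking Proposition \ref{thm:Nestimate} at $u=e_j$, while the paper notes directly that $\norm{N}\geq N^{jj}=\norm{B^j}_F^2$ --- the same fact.
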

\begin{proof}
We begin by noting that $\norm{N}\geq \max_{j} |N^{jj}| =\max_{j} \inner{B^j}{B^j} = \max_j \norm{B^j}_F^2$. From Definition \ref{def:weakcondition}, $\norm{B^j}\leq \lambda n^{-1/2}\norm{B^j}_F \leq \lambda n^{-1/2}\norm{N}^{1/2}$ for any $1\leq j \leq p$, which is our first inequality. It follows that $\norm{S}\leq \sum_{j=1}^p \norm{B^j}^2\leq p \lambda^2 \norm{N}/n$. Apply Propositions \ref{thm:QSN} and \ref{thm:QBSB} to obtain the second inequality.
\end{proof}

\begin{proposition}\label{thm:FN}
$F,G$ are Hermitian, and
$$\max(\norm{F},\norm{G})\leq \lambda^2 \norm{N}(p/n).$$
\end{proposition}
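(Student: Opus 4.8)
The plan is to read the two Hermiticity claims off the block structure and then bound each spectral norm by a quadratic-form estimate in which the sum over $\zeta$ in Equation~(\ref{eq:Telement}) can be carried out in closed form. Since the $(\xi,\eta)$ block of $F$ is $T_{\xi\eta}$ and Proposition~\ref{thm:Txixipositive} gives $T_{\xi\eta}=T_{\eta\xi}^*$, the conjugate transpose of $F$ permutes its blocks back to $F$, so $F$ is Hermitian; the $(\xi,\eta)$ block of $G$ is $T_{\xi\eta}^*=T_{\eta\xi}$, and the same identity makes $G$ Hermitian. Hence $\norm{F}=\max_{\norm{x}=1}\abs{x^*Fx}$ and $\norm{G}=\max_{\norm{x}=1}\abs{x^*Gx}$, and it is enough to bound these two forms. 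Throughout I write a unit vector $x\in\cplexes^{np}$ as $(x_\xi)_{\xi=1}^n$ with $x_\xi=(x_\xi^1,\dots,x_\xi^p)\in\cplexes^p$, and set $\tilde x^k\in\cplexes^n$ to be the vector whose $\eta$-th entry is $x_\eta^k$, so that $\sum_{k=1}^p\norm{\tilde x^k}^2=\norm{x}^2=1$.

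\textbf{The bound on $\norm{F}$.} Expanding $x^*Fx=\sum_{\xi,\eta,j,k}\overline{x_\xi^j}\,x_\eta^k\,T_{\xi\eta}^{jk}$, substituting $T_{\xi\eta}^{jk}=\sum_{\zeta}\overline{B^j_{\zeta\xi}}B^k_{\zeta\eta}$ from Equation~(\ref{eq:Telement}), and performing the sum over $\zeta$ outermost, the form collapses to a perfect square: $x^*Fx=\sum_{\zeta=1}^m\abs{w_\zeta}^2=\norm{w}^2$ with $w=\sum_{k=1}^pB^k\tilde x^k$. I would then apply the triangle inequality and Cauchy--Schwarz, $\norm{w}^2\le\bigl(\sum_k\norm{B^k}^2\bigr)\bigl(\sum_k\norm{\tilde x^k}^2\bigr)$, and invoke Proposition~\ref{thm:BN}, which gives $\norm{B^k}\le\lambda n^{-1/2}\norm{N}^{1/2}$ for each $k$ and hence $\sum_k\norm{B^k}^2\le p\lambda^2\norm{N}/n$; together with $\sum_k\norm{\tilde x^k}^2=1$ this yields $\norm{F}\le\lambda^2\norm{N}(p/n)$ (and incidentally shows $F\succeq0$).

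\textbf{The bound on $\norm{G}$.} The same computation, but now with $(T_{\xi\eta}^*)^{jk}=\sum_{\zeta}B^k_{\zeta\xi}\overline{B^j_{\zeta\eta}}$ and again summing over $\zeta$ outermost, produces $x^*Gx=\sum_{j,k}(B^j\,\overline{\tilde x^k})^*(B^k\,\overline{\tilde x^j})$. This is no longer manifestly a sum of squares, so I would pass to moduli, bound each scalar by Cauchy--Schwarz, and then symmetrize the double sum: $\abs{x^*Gx}\le\sum_{j,k}\norm{B^j\,\overline{\tilde x^k}}\,\norm{B^k\,\overline{\tilde x^j}}\le\sum_{j,k}\norm{B^k\,\overline{\tilde x^j}}^2\le\bigl(\sum_k\norm{B^k}^2\bigr)\bigl(\sum_j\norm{\tilde x^j}^2\bigr)$, using $\norm{B^k\,\overline{\tilde x^j}}\le\norm{B^k}\norm{\tilde x^j}$. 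By Proposition~\ref{thm:BN} and $\sum_j\norm{\tilde x^j}^2=1$ this is at most $\lambda^2\norm{N}(p/n)$, so $\norm{G}\le\lambda^2\norm{N}(p/n)$ as well.

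\textbf{The main obstacle.} There is no deep difficulty, but the step that needs care is the $G$ estimate: unlike $x^*Fx$, the form $x^*Gx=\sum_{j,k}(B^j\overline{\tilde x^k})^*(B^k\overline{\tilde x^j})$ is not obviously nonnegative (the cross terms mix $j$ and $k$), so the tidy ``perfect square'' argument used for $F$ does not apply directly; one has to pass to moduli and symmetrize before falling back on the same scalar bound $\sum_k\norm{B^k}^2\le p\lambda^2\norm{N}/n$ from Proposition~\ref{thm:BN}. Everything else is bookkeeping with the index identity~(\ref{eq:Telement}).
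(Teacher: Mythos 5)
Your proof is correct, and it reaches the stated bound by a different mechanism than the paper. The paper's proof handles the norm bound by reindexing: since the $\bigl((\xi,j),(\eta,k)\bigr)$ entry of $F$ equals the $\bigl((j,\xi),(k,\eta)\bigr)$ entry of the block matrix $F'=(T^{jk})_{1\leq j,k\leq p}$, a permutation of rows and columns gives $\norm{F}=\norm{F'}$, and then the block estimate $\norm{F'}^2\leq\sum_{j,k}\norm{T^{jk}}^2$ together with $\norm{T^{jk}}=\norm{{B^j}^*B^k}\leq\norm{B^j}\norm{B^k}$ and Proposition \ref{thm:BN} gives $\norm{F}\leq\lambda^2\norm{N}(p/n)$; the identical argument is then cited for $G$. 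You instead stay with the original $(\xi,\eta)$ block structure and bound the Rayleigh quotient directly from Equation (\ref{eq:Telement}): for $F$ the form collapses to $\norm{\sum_k B^k\tilde x^k}^2$, a perfect square, after which Cauchy--Schwarz and Proposition \ref{thm:BN} finish the job, while for $G$ the square structure is lost and you need the extra symmetrization (or Cauchy--Schwarz on the double sum) before applying the same scalar bound. The trade-off: the paper's route is shorter and treats $F$ and $G$ uniformly, at the price of invoking the block-matrix norm inequality $\norm{(T^{jk})}^2\leq\sum_{j,k}\norm{T^{jk}}^2$; your route is more hands-on, avoids that inequality, and yields the additional observation that $F$ is positive semidefinite, but it forces a separate (and slightly more delicate) treatment of $G$. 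Both arguments ultimately rest on the same key input, namely $\norm{B^j}\leq\lambda n^{-1/2}\norm{N}^{1/2}$ from Proposition \ref{thm:BN}, and both produce the same constant, so your version is a valid substitute for the paper's proof.
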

\begin{proof}
That $F,G$ are Hermitian follow from Proposition \ref{thm:Txixipositive}. Define $F'=(T^{jk})$ another block matrix. Since reindexing the rows and columns of ${F}$ does not change its norm, $\norm{F}=\norm{F'}$. By Proposition \ref{thm:BN}, $\norm{F'}^2\leq \sum_{j,k=1}^p \norm{T^{jk}}^2 \leq \sum_{j,k=1}^p \norm{B^j}^2\norm{B^k}^2 \leq \lambda^4 \norm{N}^2(p/n)^2$. The same argument works for $G$.
\end{proof}

We now combine the above linear algebraic results with a probabilistic result in Appendix \ref{sec:prob}. Prepare to apply Proposition \ref{thm:tailbound} with $A_{ij}$ replaced with $T_{\xi \eta}$. Note that $R=\sum_{\xi \eta} T_{\xi \eta} T_{\xi \eta}^*=Q$ by Proposition \ref{thm:QBSB}. Bound $\sigma$ using Propositions \ref{thm:BN} and  \ref{thm:FN}:
\begin{align*}\sigma &= C_1 \max(\norm{Q}^{1/2},\norm{R}^{1/2},\norm{F},\norm{G})\\
&\leq C_1 \norm{N} \max((p/n)^{1/2} \lambda, (p/n)\lambda^2)\\
&\leq C_1 \norm{N} (p/n)^{1/2} \lambda.
\end{align*}
The last step goes through because our assumption on $n$ guarantees that $(p/n)^{1/2}\lambda\leq 1$. Finally, apply Proposition \ref{thm:tailbound} with $t\norm{N}/\kappa = e \sigma u$. The proof is complete.

\subsection{Sketch of the proof for Theorem \ref{thm:mainweak}}
Follow the proof of Proposition \ref{thm:tailbound}. Letting $s=\log n$, we obtain
\begin{align*}
\E\norm{M-N}&\leq \left( \E \norm{M-N}^{s} \right)^{1/s} \\
&\leq C_1 (2C_2 n p)^{1/s} s \max(\norm{Q}^{1/2}, \norm{R}^{1/2},\norm{F},\norm{G})\\
& \leq C (\log n) \norm{N}(p/n)^{1/2} \lambda.
\end{align*}

\subsection{Proof of Proposition \ref{thm:accuracy}}
Recall that $A$ is approximately the linear combination $\sum_{j=1}^p d^j B^j$, while $\sum_{j=1}^p c^j B^j$ is the recovered linear combination. We shall first show that the recovered coefficients $c$ is close to $d$:
\begin{align*}
\norm{d-c}& = \norm{ L^+ Au - c}\\
&= \norm{L^+ (Lc+ Eu) - c}\\
&=\norm{L^+ E u}\\
& \leq \eps \norm{u} \left( \frac{\kappa}{(1-t)\norm{N}}\right)^{1/2}.
\end{align*}
Let $v$ be a unit $n$-vector. Let $L'$ be a $n\times p$ matrix such that its $j$-th column is $B^j v$. Now,
$$Av-\sum_{j=1}^p c^j B^j v =  (L' d + Ev) - L' c= Ev + L'(d-c).$$
Combining the two equations, we have
\begin{equation}\label{eq:tmp1}
\norm{A-\sum_{j=1}^p c^j B^j}\leq \eps + \eps \norm{L'}\norm{u} \left( \frac{\kappa}{(1-t)\norm{N}}\right)^{1/2}.
\end{equation}
With overwhelming probability, $\norm{u}=O(\sqrt{n})$. The only term left that needs to be bounded is $\norm{L'}$. This turns out to be easy because $\norm{B^j}\leq \lambda n^{-1/2}\norm{N}^{1/2}$ by Proposition \ref{thm:BN} and
$$\norm{L'}^2 \leq \sum_{j=1}^p \norm{B^j v}^2 \leq \lambda^2 \norm{N} p/n.$$

Substitute this into Equation (\ref{eq:tmp1}) to finish the proof.

\subsection{Proof of Corollary \ref{thm:main2}}\label{sec:proof3}
Let $u\sim N(0,1)^n$ iid. Say $A$ has a singular value decomposition $E\Lambda F^*$ where $\Lambda$ is diagonal. Do a change of basis by letting $u'=F^* u\sim N(0,1)^n$ iid, $B'_j = F^* B_j E$ and $\mathcal{B}'_{\Lambda}=\{B'_1 \Lambda,\ldots,B'_p \Lambda\}$. Equation (\ref{eq:tosolve}) is reduced to $L' c = \Lambda u'$ where the $j$-th column of $L'$ is $B'_j \Lambda u'$.

Since Frobenius inner products, $\norm{\cdot}$ and $\norm{\cdot}_F$ are all preserved under unitary transformations, it is clear that $\kappa(\mathcal{B}'_{\Lambda})=\kappa(\mathcal{B}_A)$ and $\lambda(\mathcal{B}'_{\Lambda}) = \lambda(\mathcal{B}_A)$. Essentially, for our purpose here, we may pretend that $A=\Lambda$.

Let $\tilde{n}=\rank(A)$. If $A$ has a large nullspace, i.e., $\tilde{n}\ll \min(m,n)$, then $B'_j \Lambda$ has $n-\tilde{n}$ columns of zeros and many components of $u'$ are never transmitted to the $B'_j$'s anyway. We may therefore truncate the length of $u'$ to $\tilde{n}$, let $\tilde{B}_j\in \cplexes^{n \times \tilde{n}}$ be $B'_j\Lambda$ with its columns of zeros chopped away and apply Theorem \ref{thm:main} with $\mathcal{B}$ replaced with $\tilde{\mathcal{B}}:=\{\tilde{B}_1,\ldots,\tilde{B}_p\}$. Observe that $\kappa(\tilde{\mathcal{B}})=\kappa(\mathcal{B}'_{\Lambda})$, whereas $\lambda(\tilde{\mathcal{B}})=(\tilde{n}/n)^{1/2} \lambda(\mathcal{B}'_{\Lambda})$ because $\norm{\tilde{B}_j}_F = \norm{B'_j \Lambda}_F$ and $\norm{\tilde{B}_j}=\norm{B'_j \Lambda}$ but $\tilde{B}_j$ has $\tilde{n}$ instead of $n$ columns. The proof is complete.

\section{Probing operators with smooth symbols}\label{sec:pdos}

\subsection{Basics and assumptions}\label{sec:pdobasics}

We begin by defining what a pseudodifferential symbol is.

\begin{definition}
Every linear operator $A$ is associated with a pseudodifferential symbol  $a(x,\xi)$ such that for any $u:\reals^d \rightarrow \reals$,
\begin{equation}\label{eq:symbolrep}
Au(x) = \int_{\xi\in \reals^d} e^{2\pi i \xi \cdot x} a(x,\xi) \hat{u}(\xi) d\xi
\end{equation}
where $\hat{u}$ is the Fourier transform of $u$, that is $\hat{u}(\xi) = \int_{x\in \reals^d} u(x) e^{-2\pi i \xi \cdot x} dx$.
\end{definition}

We refrain from calling $A$ a ``pseudodifferential operator'' at this point because its symbol has to satisfy some additional constraints that will be covered in Section \ref{sec:order}. What is worth noting here is the Schwartz kernel theorem which shows that every linear operator $A:\mathcal{S}(\reals^d) \rightarrow \mathcal{S}'(\reals^d)$ has a symbol representation as in Equation (\ref{eq:symbolrep}) and in that integral, $a(x,\xi)\in \mathcal{S}'(\reals^d \times \reals^d)$ acts as a distribution. Recall that $\mathcal{S}$ is the Schwartz space and $\mathcal{S}'$ is its dual or the space of tempered distributions. The interested reader may refer to \cite{folland95} or \cite{shubin01} for a deeper discourse.

The term ``pseudodifferential'' arises from the fact that differential operators have very simple symbols. For example, the Laplacian has the symbol $a(x,\xi) =-4\pi^2 \norm{\xi}^2$. Another example is
\begin{equation*}
Au(x) = u(x) - \nabla \cdot{\alpha(x) \grad u(x)} \mbox{ for some }\alpha(x)\in C^1(\reals^d).
\end{equation*}
Its symbol is
\begin{equation}\label{eq:example1}
a(x,\xi)=1+\alpha(x)(4\pi^2 \norm{\xi}^2)-\sum_{k=1}^d (2\pi i \xi_k)\pd_{x_k}\alpha(x).
\end{equation}
Clearly, if the media $\alpha(x)$ is smooth, so is the symbol $a(x,\xi)$ smooth in both $x$ and $\xi$, an important property which will be used in Section \ref{sec:symexpand}.

For practical reasons, we make the following assumptions about $u:\reals^d \rightarrow \reals$ on which symbols are applied.
\begin{enumerate}
\item $u$ is periodic with period 1, so only $\xi \in \ints^d$ will be considered in the Fourier domain.
\item $u$ is bandlimited, say $\hat{u}$ is supported on $\Xi:=[-\xi_0,\xi_0]^d \subseteq \ints^d$. Any summation over the Fourier domain is by default over $\Xi$.\footnote{To have an even number of points per dimension, one can use $\Xi=[-\xi_0,\xi_0-1]^d$ for example. We leave this generalization to the reader and continue to assume $\xi\in [-\xi_0,\xi_0]^d$.}
\item $a(x,\xi)$ and $u(x)$ are only evaluated at $x\in X\subset[0,1]^d$ which are points uniformly spaced apart. Any summation over $x$ is by default over $X$.
\end{enumerate}

Subsequently, Equation (\ref{eq:symbolrep}) reduces to a discrete and finite form:
\begin{equation}\label{eq:discretesymbol}
Au(x) = \sum_{\xi \in \Xi} e^{2\pi i \xi \cdot x} a(x,\xi) \hat{u}(\xi).
\end{equation}
We like to call $a(x,\xi)$ a ``discrete symbol.'' Some tools are already available for manipulating such symbols \cite{demanet2010discrete}.

\subsection{User friendly representations of symbols}
Given a linear operator $A$, it is useful to relate its symbol $a(x,\xi)$ to its matrix representation in the Fourier basis. This helps us understand the symbol as a matrix and also exposes easy ways of computing the symbols of $A^{-1}, A^*$ and $AB$ using standard linear algebra software.

By a matrix representation $(A_{\eta \xi})$ in Fourier basis, we mean of course that $\widehat{Au}(\eta) = \sum_{\xi} A_{\eta \xi} \hat{u}(\xi)$ for any $\eta$. We also introduce a more compact form of the symbol: $\hat{a}(j,\xi) = \int_x a(x,\xi) e^{-2\pi i j\cdot x} dx$. The next few results are pedagogical and listed for future reference.

\begin{proposition}\label{thm:a2mat}
Let $A$ be a linear operator with symbol $a(x,\xi)$. Let $(A_{\eta \xi})$ and $\hat{a}(j,\xi)$ be as defined above. Then
$$A_{\eta \xi} = \int_{x} a(x,\xi) e^{-2\pi i (\eta - \xi) x} dx;\quad a(x,\xi) = e^{-2\pi i \xi x} \sum_{\eta} e^{2\pi i \eta x} A_{\eta \xi};$$
$$A_{\eta \xi} = \hat{a}(\eta - \xi,\xi);\quad \hat{a}(j, \xi) = A_{j+\xi,\xi}.$$
\end{proposition}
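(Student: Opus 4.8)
\emph{Proof proposal.} The plan is to unwind the definitions and read off each of the four identities by comparing Fourier coefficients in $x$; the only substantive ingredient is the (discrete) Fourier inversion formula on the grid $X$. First I would derive the relation $A_{\eta\xi} = \int_x a(x,\xi) e^{-2\pi i(\eta-\xi)\cdot x}\,dx$. Starting from the discrete symbol representation (\ref{eq:discretesymbol}), compute the $\eta$-th Fourier coefficient of $Au$, namely $\widehat{Au}(\eta) = \int_x Au(x)\,e^{-2\pi i\eta\cdot x}\,dx$ (a normalized sum over $X$ in the discretized picture). Substituting (\ref{eq:discretesymbol}) and interchanging the finite sum over $\xi$ with the integration in $x$ gives $\widehat{Au}(\eta) = \sum_\xi\bigl(\int_x a(x,\xi) e^{-2\pi i(\eta-\xi)\cdot x}\,dx\bigr)\hat u(\xi)$. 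Since this must coincide with $\widehat{Au}(\eta) = \sum_\xi A_{\eta\xi}\hat u(\xi)$ for \emph{every} admissible $u$, and $\hat u$ ranges over all vectors supported on $\Xi$, I may match the coefficients of $\hat u(\xi)$ to obtain the claimed formula for $A_{\eta\xi}$.

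The two identities involving $\hat a$ are then immediate. Comparing the formula just obtained with the definition $\hat a(j,\xi) = \int_x a(x,\xi)\,e^{-2\pi i j\cdot x}\,dx$ yields $A_{\eta\xi} = \hat a(\eta-\xi,\xi)$ directly, and the change of index $j = \eta-\xi$ (equivalently $\eta = j+\xi$) gives $\hat a(j,\xi) = A_{j+\xi,\xi}$. For the remaining inversion formula, I would regard $x\mapsto a(x,\xi)$ as a function on the grid $X$ whose Fourier coefficients are precisely $\hat a(j,\xi) = A_{j+\xi,\xi}$, and apply the discrete Fourier inversion theorem: $a(x,\xi) = \sum_j \hat a(j,\xi)\,e^{2\pi i j\cdot x} = \sum_j A_{j+\xi,\xi}\,e^{2\pi i j\cdot x}$. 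Re-indexing by $\eta = j+\xi$ and pulling out the $\xi$-dependent phase gives $a(x,\xi) = e^{-2\pi i\xi\cdot x}\sum_\eta e^{2\pi i\eta\cdot x} A_{\eta\xi}$, which is the last claimed identity.

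The only point requiring care, and essentially the sole obstacle, is the bookkeeping between the continuous integrals written in the statement and the genuinely discrete setup of Section \ref{sec:pdobasics}: one must check that the number and spacing of the points in $X$ are compatible with the frequency set (so that the finite Fourier transform in $x$ is a bijection and the inversion step is exact) and that all sums over $\xi$ and $\eta$ are finite, so that the interchange of summation is trivially valid. Under the standing assumptions this is routine, and the proposition is in effect a restatement of Fourier duality between the spatial and frequency representations of $A$.
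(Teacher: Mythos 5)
Your proposal is correct and follows essentially the same route as the paper, whose entire proof is ``Let $\eta=\xi+j$ and apply the definitions''; you have simply written out that substitution and coefficient-matching in detail. The extra care you flag about the discrete grid being compatible with the frequency set is fine but not treated as an issue in the paper, since the standing assumptions of Section \ref{sec:pdobasics} make the finite Fourier transform in $x$ exact.
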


\begin{proof}
Let $\eta=\xi+j$ and apply the definitions.
\end{proof}

\begin{proposition}[Trace]\label{thm:trace}
Let $A$ be a linear operator with symbol $a(x,\xi)$. Then
$$\tr(A) = \sum_{\xi} \hat{a}(0,\xi) = \sum_{\xi} \int_x a(x,\xi) dx.$$
\end{proposition}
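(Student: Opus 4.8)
The plan is to compute the trace in the Fourier basis, where the matrix entries $A_{\eta\xi}$ have already been identified with the (compact) symbol in Proposition \ref{thm:a2mat}. Since the trace of a linear operator is independent of the orthonormal basis used to represent it, I would start from $\tr(A)=\sum_{\xi}A_{\xi\xi}$, the sum of the diagonal entries of the Fourier-basis matrix representation $(A_{\eta\xi})$ built from the relation $\widehat{Au}(\eta)=\sum_{\xi}A_{\eta\xi}\hat{u}(\xi)$.

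Next I would specialize the identities of Proposition \ref{thm:a2mat} to the diagonal: taking $\eta=\xi$, i.e. $j=\eta-\xi=0$, the identity $A_{\eta\xi}=\hat{a}(\eta-\xi,\xi)$ gives $A_{\xi\xi}=\hat{a}(0,\xi)$. Summing over $\xi\in\Xi$ yields the first claimed equality $\tr(A)=\sum_{\xi}\hat{a}(0,\xi)$. Finally, unfolding the definition $\hat{a}(j,\xi)=\int_x a(x,\xi)e^{-2\pi i j\cdot x}\,dx$ at $j=0$ gives $\hat{a}(0,\xi)=\int_x a(x,\xi)\,dx$, which produces the second equality and completes the proof.

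I do not expect a real obstacle here; the statement is essentially a corollary of Proposition \ref{thm:a2mat}. The only point worth a sentence of care is the opening step $\tr(A)=\sum_{\xi}A_{\xi\xi}$: in the discrete, periodic, bandlimited setting of Section \ref{sec:pdobasics} the modes $\{e^{2\pi i\xi\cdot x}\}_{\xi\in\Xi}$ furnish a finite (orthogonal) basis of the space on which $A$ acts, so $(A_{\eta\xi})_{\eta,\xi\in\Xi}$ is a genuine finite square matrix and the elementary fact that the trace equals the sum of the diagonal entries applies directly. (Alternatively, one could bypass the matrix picture entirely by plugging $u(x)=e^{2\pi i\eta\cdot x}$ into the discrete symbol representation \eqref{eq:discretesymbol}, reading off the $\eta$-th diagonal coefficient, and summing; this gives the same chain of equalities.)
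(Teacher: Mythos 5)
Your proof is correct, and it is exactly the intended argument: the paper states these symbol identities without proof ("we leave it to the reader to verify"), and the natural verification is precisely yours — unitarity of the Fourier basis lets you take the trace as $\sum_{\xi}A_{\xi\xi}$, Proposition \ref{thm:a2mat} gives $A_{\xi\xi}=\hat{a}(0,\xi)$, and the definition of $\hat{a}$ at $j=0$ gives the second equality. Your remark that in the discrete, bandlimited, periodic setting the representation $(A_{\eta\xi})_{\eta,\xi\in\Xi}$ is a genuine finite square matrix is the right point of care, and the alternative route via plugging $u(x)=e^{2\pi i\eta\cdot x}$ into Equation (\ref{eq:discretesymbol}) is an equivalent reading of the same computation.
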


\begin{proposition}[Adjoint]\label{thm:adjoint}
Let $A$ and $C=A^*$ be linear operators with symbols $a(x,\xi),c(x,\xi)$. Then
$$\hat{c}(j,\xi) = \overline{\hat{a}(-j,j+\xi)};\quad c(x,\xi) = \sum_{\eta} \int_y \overline{a(y,\eta)} e^{2\pi i (\eta - \xi)(x-y)}dy.$$
\end{proposition}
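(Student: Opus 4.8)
The plan is to route everything through the dictionary between a symbol and its matrix representation in the Fourier basis provided by Proposition \ref{thm:a2mat}, together with the single fact we need about adjoints: because the exponentials $\{e^{2\pi i\xi\cdot x}\}_{\xi\in\Xi}$ form an orthonormal system, the matrix of $C=A^*$ in that basis is the conjugate transpose of the matrix of $A$, i.e. $C_{\eta\xi}=\overline{A_{\xi\eta}}$. Once this is granted, both claimed identities are pure bookkeeping with the four formulas of Proposition \ref{thm:a2mat}.

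First I would prove the compact-form identity. Applying Proposition \ref{thm:a2mat} to $C$ gives $\hat{c}(j,\xi)=C_{j+\xi,\xi}$; using $C_{\eta\xi}=\overline{A_{\xi\eta}}$ this equals $\overline{A_{\xi,\,j+\xi}}$, and applying the relation $A_{\eta\xi}=\hat{a}(\eta-\xi,\xi)$ once more yields $A_{\xi,\,j+\xi}=\hat{a}\bigl(\xi-(j+\xi),\,j+\xi\bigr)=\hat{a}(-j,\,j+\xi)$. Conjugating gives $\hat{c}(j,\xi)=\overline{\hat{a}(-j,\,j+\xi)}$, the first assertion.

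For the second identity I would start from the inversion formula $c(x,\xi)=e^{-2\pi i\xi x}\sum_{\eta}e^{2\pi i\eta x}C_{\eta\xi}$ in Proposition \ref{thm:a2mat}, substitute $C_{\eta\xi}=\overline{A_{\xi\eta}}=\overline{\int_y a(y,\eta)e^{-2\pi i(\xi-\eta)y}\,dy}=\int_y\overline{a(y,\eta)}\,e^{2\pi i(\xi-\eta)y}\,dy$, and then collect the three exponential factors: $-\xi x+\eta x+\xi y-\eta y=(\xi-\eta)(y-x)=(\eta-\xi)(x-y)$, which turns the expression into $\sum_{\eta}\int_y\overline{a(y,\eta)}\,e^{2\pi i(\eta-\xi)(x-y)}\,dy$, as claimed. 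Equivalently, one can feed the first identity into $c(x,\xi)=\sum_j\hat{c}(j,\xi)e^{2\pi i jx}$ and relabel $\eta=j+\xi$; the two routes give the same answer.

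There is no real obstacle here — the proposition is pedagogical — but the one point genuinely worth checking is the adjoint step: one must confirm that the discrete/periodic conventions of Section \ref{sec:pdobasics} make $\{e^{2\pi i\xi\cdot x}\}_{\xi\in\Xi}$ orthonormal on $X$, so that $\langle Au,v\rangle=\langle u,A^{*}v\rangle$ really does translate into $C_{\eta\xi}=\overline{A_{\xi\eta}}$, and to keep the roles of the spatial variable $y$ and the Fourier indices $\eta,\xi,j$ straight through the relabelings. Everything else is a routine application of Proposition \ref{thm:a2mat}.
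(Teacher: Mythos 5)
Your proof is correct, and it takes the route the paper intends: Proposition \ref{thm:adjoint} is one of the results the paper explicitly leaves to the reader, and your argument --- using unitarity of the discrete Fourier basis to get $C_{\eta\xi}=\overline{A_{\xi\eta}}$ and then translating back and forth with the dictionary of Proposition \ref{thm:a2mat} --- is exactly the intended bookkeeping. The index shifts and the exponent computation $(\xi-\eta)(y-x)=(\eta-\xi)(x-y)$ both check out, so nothing is missing.
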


\begin{proposition}[Composition]\label{thm:compose}
Let $A,B$ and $C=AB$ be linear operators with symbols $a(x,\xi),b(x,\xi),c(x,\xi)$. Then
$$\hat{c}(j,\xi) = \sum_{\zeta} \hat{a}(j+\xi-\zeta,\zeta)\hat{b}(\zeta-\xi,\xi);$$
$$c(x,\xi)=\sum_{\zeta} \int_y e^{2\pi i (\zeta - \xi)(x-y)} a(x,\zeta) b(y,\xi) dy.$$

\end{proposition}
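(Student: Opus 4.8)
The plan is to compute the symbol of $C = AB$ directly from the definition, using the symbol representation of $A$ and $B$ and the Fourier inversion / orthogonality relations on the discrete grid. First I would write $Bu(y) = \sum_{\zeta} e^{2\pi i \zeta\cdot y} b(y,\zeta)\hat u(\zeta)$, and then feed this into the symbol representation of $A$. The subtlety is that to apply $A$ to the function $v := Bu$, I need the Fourier coefficients $\hat v(\zeta)$, not the pointwise values $v(y)$; so the first real step is to express $\hat v(\zeta) = \sum_y v(y) e^{-2\pi i \zeta\cdot y}$ (discrete Fourier transform over $x\in X$) in terms of $b$ and $\hat u$. Substituting the formula for $v(y)$ gives $\hat v(\zeta) = \sum_{\zeta'}\big(\sum_y b(y,\zeta') e^{2\pi i(\zeta'-\zeta)\cdot y}\big)\hat u(\zeta') = \sum_{\zeta'}\hat b(\zeta-\zeta',\zeta')\,\hat u(\zeta')$, using Proposition \ref{thm:a2mat} to recognize the inner sum as $\hat b(\zeta-\zeta',\zeta')$ (equivalently $B_{\zeta\zeta'}$).

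Next I would apply $A$: $Cu(x) = A v(x) = \sum_{\zeta} e^{2\pi i\zeta\cdot x} a(x,\zeta)\hat v(\zeta) = \sum_{\zeta}\sum_{\zeta'} e^{2\pi i\zeta\cdot x} a(x,\zeta)\,\hat b(\zeta-\zeta',\zeta')\,\hat u(\zeta')$. On the other hand, by definition $Cu(x) = \sum_{\xi} e^{2\pi i\xi\cdot x} c(x,\xi)\hat u(\xi)$. Matching the coefficient of $\hat u(\xi)$ (valid since $\hat u$ is arbitrary, so one can test against $\hat u = \delta_{\xi}$), i.e. setting $\zeta' = \xi$, yields
\[
c(x,\xi) = \sum_{\zeta} e^{2\pi i(\zeta-\xi)\cdot x}\, a(x,\zeta)\,\hat b(\zeta-\xi,\xi).
\]
Then I replace $\hat b(\zeta-\xi,\xi) = \int_y b(y,\xi) e^{-2\pi i(\zeta-\xi)\cdot y}\,dy$ (its defining integral) and pull it inside the sum to get $c(x,\xi) = \sum_{\zeta}\int_y e^{2\pi i(\zeta-\xi)(x-y)} a(x,\zeta) b(y,\xi)\,dy$, which is the second claimed identity. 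For the first identity, I take the $\hat{\cdot}$ transform in the $x$ variable of the displayed formula for $c(x,\xi)$: $\hat c(j,\xi) = \int_x c(x,\xi) e^{-2\pi i j\cdot x}dx = \sum_{\zeta}\big(\int_x a(x,\zeta) e^{-2\pi i(j-\zeta+\xi)\cdot x}dx\big)\hat b(\zeta-\xi,\xi)$. By Proposition \ref{thm:a2mat} the bracketed integral is $\hat a(j+\xi-\zeta,\zeta)$, so $\hat c(j,\xi) = \sum_{\zeta}\hat a(j+\xi-\zeta,\zeta)\hat b(\zeta-\xi,\xi)$, as claimed. (This is just the statement that matrix multiplication $C_{\eta\xi} = \sum_{\zeta} A_{\eta\zeta} B_{\zeta\xi}$ translates, via $A_{\eta\xi} = \hat a(\eta-\xi,\xi)$, into the convolution-like sum above — so an alternative, shorter route is to start from $C = AB$ in Fourier-matrix form and invoke Proposition \ref{thm:a2mat} throughout.)

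The only place requiring care — the "main obstacle," though it is minor — is bookkeeping the discrete Fourier orthogonality on $X$ and $\Xi$: one must be sure that $\sum_{x\in X} e^{2\pi i k\cdot x}$ behaves as the appropriate (scaled) Kronecker delta on the relevant range of frequencies, so that the step "$\hat v$ computed from $v(y)$ reproduces the coefficients correctly" and the step "match coefficients of $\hat u(\xi)$" are both legitimate. Since the paper already adopts the discretization conventions of Section \ref{sec:pdobasics} and states the companions (Propositions \ref{thm:a2mat}, \ref{thm:trace}, \ref{thm:adjoint}) are "pedagogical and listed for future reference," I would treat these orthogonality facts as part of the standing setup and keep the proof to the two or three substitution steps above, most compactly by phrasing everything through the Fourier matrix entries $A_{\eta\xi}$, $B_{\zeta\xi}$, $C_{\eta\xi}$ and Proposition \ref{thm:a2mat}.
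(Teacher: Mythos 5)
Your derivation is correct, and it is essentially the intended argument: the paper omits the proof (``We leave it to the reader to verify the above results''), and the expected verification is exactly your direct substitution, most compactly via the matrix identity $C_{\eta\xi}=\sum_{\zeta}A_{\eta\zeta}B_{\zeta\xi}$ together with Proposition \ref{thm:a2mat}. The only caveat, which you already flag, is the normalization convention for the discrete transform (the sum over $y\in X$ should be the normalized one, i.e.\ the paper's $\int_y$), a bookkeeping point that does not affect the argument.
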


We leave it to the reader to verify the above results.

\subsection{Symbol expansions}\label{sec:symexpand}

The idea is that when a linear operator $A$ has a smooth symbol $a(x,\xi)$, only a few basis functions are needed to approximate $a$, and correspondingly only a small $\mathcal{B}$ is needed to represent $A$. This is not new, see for example \cite{demanet2010discrete}. In this paper, we consider the separable expansion
\begin{equation*}\label{eq:separablesymbol}
a(x,\xi) = \sum_{j k} c_{jk} e_j(x) g_k(\xi).
\end{equation*}

This is the same as expanding $A$ as $\sum_{jk} c_{jk} B_{jk}$ where the symbol for $B_{jk}$ is $e_j(x) g_k(\xi)$. With an abuse of notation, let $B_{jk}$ also denote its matrix representation \emph{in Fourier basis}. Given our assumption that $\xi\in [-\xi_0,\xi_0]^d$, we have $B_{jk}\in \cplexes^{n \times n}$ where $n=(2\xi_0+1)^d$. As its symbol is separable, $B_{jk}$ can be factorized as
\begin{equation}\label{eq:symfactors}
B_{jk} = \mathcal{F} \diag(e_j(x)) \mathcal{F}^{-1}\diag(g_k(\xi))
\end{equation}
where $\mathcal{F}$ is the unitary Fourier matrix. An alternative way of viewing $B_{jk}$ is that it takes its input $\hat{u}(\xi)$, multiply by $g_k(\xi)$ and convolve it with $\hat{e}_j(\eta)$, the Fourier transform of $e_j(x)$. There is also an obvious algorithm to apply $B_{jk}$ to $u(x)$ in $\tilde{O}(n)$ time as outlined below. As mentioned in Section \ref{sec:motivation}, this speeds up the recovery of the coefficients $c$ and makes matrix probing a cheap operation.

\begin{algorithm}
Given vector $u(x)$, apply the symbol $e_j(x) g_k(\xi)$.
\begin{enumerate}
\item Perform FFT on $u$ to obtain $\hat{u}(\xi)$.
\item Multiply $\hat{u}(\xi)$ by $g_k(\xi)$ elementwise.
\item Perform IFFT on the previous result, obtaining $\sum_{\xi} e^{2\pi i \xi \cdot x} g_k(\xi) \hat{u}(\xi)$.
\item Multiply the previous result by $e_j(x)$ elementwise.
\end{enumerate}
\end{algorithm}

Recall that for $L$ to be well-conditioned with high probability, we need to check whether $N$, as defined in Equation (\ref{eq:Njk}), is well-conditioned, or in a rough sense whether $\inner{B_j}{B_k} \simeq \delta_{jk}$. For separable symbols, this inner product is easy to compute.

\begin{proposition}\label{thm:syminner}
Let $B_{jk},B_{j'k'}\in \cplexes^{n \times n}$ be matrix representations (in Fourier basis) of linear operators with symbols $e_j(x) g_k(\xi)$ and $e_{j'}(x) g_{k'}(\xi)$. Then
$$\inner{B_{jk}}{B_{j'k'}} =\inner{e_j}{e_{j'}}\inner{g_k}{g_{k'}}$$
where $\inner{e_j}{e_{j'}}=\frac{1}{n}\sum_{i=1}^n \overline{e_j(x_i)} e_{j'}(x_i)$ and $x_1,\ldots,x_n$ are points in $[0,1]^d$ uniformly spaced, and $\inner{g_k}{g_{k'}}=\sum_{\xi} \overline{g_k(\xi)} g_k(\xi)$.
\end{proposition}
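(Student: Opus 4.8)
The plan is to compute $\inner{B_{jk}}{B_{j'k'}} = \tr(B_{jk}^* B_{j'k'})$ directly using the factorization in Equation (\ref{eq:symfactors}). First I would write $B_{jk} = \mathcal{F}\diag(e_j)\mathcal{F}^{-1}\diag(g_k)$ and similarly for $B_{j'k'}$, so that
$$B_{jk}^* B_{j'k'} = \diag(\overline{g_k})\mathcal{F}\diag(\overline{e_j})\mathcal{F}^{-1}\mathcal{F}\diag(e_{j'})\mathcal{F}^{-1}\diag(g_{k'}),$$
using that $\mathcal{F}$ is unitary so $\mathcal{F}^* = \mathcal{F}^{-1}$. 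The two middle Fourier factors collapse, leaving $\mathcal{F}\diag(\overline{e_j} e_{j'})\mathcal{F}^{-1}$ in the middle. Then I would take the trace and use cyclic invariance to bring the two diagonal-in-$\xi$ factors together: $\tr(B_{jk}^* B_{j'k'}) = \tr\!\big(\diag(g_{k'}\overline{g_k})\,\mathcal{F}\diag(\overline{e_j} e_{j'})\mathcal{F}^{-1}\big)$.

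Next I would evaluate this trace. Conjugation of a diagonal matrix by the unitary Fourier matrix is a circulant matrix whose diagonal entries are all equal to the average $\frac{1}{n}\sum_i \overline{e_j(x_i)} e_{j'}(x_i) = \inner{e_j}{e_{j'}}$ (this is just the statement that the DFT of a sequence evaluated at frequency $0$, divided by $n$, gives the mean, and it sits on every diagonal entry of the circulant). Hence $\tr(\diag(g_{k'}\overline{g_k})\cdot\text{circulant}) = \inner{e_j}{e_{j'}}\sum_\xi \overline{g_k(\xi)} g_{k'}(\xi) = \inner{e_j}{e_{j'}}\inner{g_k}{g_{k'}}$, which is the claimed identity (modulo the evident typo $g_k$ vs.\ $g_{k'}$ in the statement's definition of $\inner{g_k}{g_{k'}}$). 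The bookkeeping here is just matching the normalization conventions: the $x$-inner product carries a $\frac{1}{n}$ because $X$ has $n$ uniformly spaced points, while the $\xi$-inner product does not because it is a genuine sum over $\Xi$; keeping $\mathcal{F}$ unitary makes both normalizations come out automatically.

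The only mildly delicate step — and the one I would be most careful about — is verifying that $(\mathcal{F}\diag(v)\mathcal{F}^{-1})_{\eta\eta}$ is independent of $\eta$ and equals $\frac{1}{n}\sum_i v_i$ under the paper's discrete Fourier conventions (periodic functions on the grid $X$ sampled at $n = (2\xi_0+1)^d$ points, frequencies in $\Xi$). This is where the uniform spacing of the $x_i$ and the bandlimiting assumption on $\Xi$ genuinely enter: one needs $\{e^{2\pi i \xi\cdot x_i}\}$ to form an orthogonal system so that $\mathcal{F}$ is unitary and $\mathcal{F}^{-1}$ is its conjugate transpose. Everything else is routine manipulation of traces and diagonal matrices. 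Alternatively, one could bypass the matrix factorization entirely and argue in the symbol/kernel picture using Proposition \ref{thm:a2mat}: express $\inner{B_{jk}}{B_{j'k'}} = \sum_{\eta,\xi} \overline{(B_{jk})_{\eta\xi}}(B_{j'k'})_{\eta\xi}$ in terms of $\hat{e}_j, \hat{e}_{j'}, g_k, g_{k'}$ and collapse the $\eta$-sum using $\sum_\eta \overline{\hat{e}_j(\eta-\xi)}\hat{e}_{j'}(\eta-\xi) = \inner{e_j}{e_{j'}}$ (Parseval on the grid), but the factorization route is cleaner and I would present that.
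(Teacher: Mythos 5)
Your proof is correct, and it takes a mildly different route from the paper's. The paper's own proof is a one-liner: it invokes the symbol calculus developed just before --- Propositions \ref{thm:adjoint} and \ref{thm:compose} to obtain the compact symbol $\hat{c}(j,\xi)$ of $C=B_{jk}^*B_{j'k'}$, then Proposition \ref{thm:trace} to sum it --- so the separability of $e_j(x)g_k(\xi)$ only enters at the very end, when the resulting double sum factors. You instead start from the factorization in Equation (\ref{eq:symfactors}), $B_{jk}=\mathcal{F}\diag(e_j)\mathcal{F}^{-1}\diag(g_k)$, and compute $\tr(B_{jk}^*B_{j'k'})$ directly using unitarity of $\mathcal{F}$, cyclicity of the trace, and the observation that $\mathcal{F}\diag(v)\mathcal{F}^{-1}$ has constant diagonal equal to the grid mean $\frac{1}{n}\sum_i v_i$ --- which is indeed exactly where the uniform grid and the bandlimiting to $\Xi$ enter, as you flag. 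Your normalization bookkeeping is right (the $\frac{1}{n}$ lands on the $x$-side inner product and not on the $\xi$-side), and since conjugation $\mathcal{F}\diag(v)\mathcal{F}^{-1}$ is insensitive to how $\mathcal{F}$ is scaled, taking $\mathcal{F}$ unitary is harmless. You are also right that the statement's $\inner{g_k}{g_{k'}}=\sum_\xi \overline{g_k(\xi)}g_k(\xi)$ contains a typo ($g_{k'}$ is meant in the second factor). What the paper's route buys is reusability --- the adjoint, composition and trace formulas apply to arbitrary, not necessarily separable, symbols --- while your factorization route is more elementary and self-contained for the separable case at hand; your closing alternative (Parseval on the grid applied to the matrix entries from Proposition \ref{thm:a2mat}) is essentially the paper's argument written out explicitly.
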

\begin{proof}
Apply Propositions \ref{thm:trace}, \ref{thm:adjoint} and \ref{thm:compose} with the symbols in the $\hat{a}(\eta,\xi)$ form.
\end{proof}

To compute $\lambda(\mathcal{B})$ as in Definition \ref{def:weakcondition}, we examine the spectrum of $B_{jk}$ for every $j,k$. A simple and relevant result is as follows.
\begin{proposition}\label{thm:symspec1}
Assume the same set-up as in Proposition \ref{thm:syminner}. Then
$$\sigma_{\min}(B_{jk}) \geq \min_x |e_j(x)| \min_{\xi} |g_k(\xi)|;\quad \sigma_{\max}(B_{jk}) \leq \max_x |e_j(x)| \max_{\xi} |g_k(\xi)|.$$
\end{proposition}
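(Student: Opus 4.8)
The plan is to exploit the factorization \eqref{eq:symfactors}, $B_{jk} = \mathcal{F}\diag(e_j(x))\mathcal{F}^{-1}\diag(g_k(\xi))$, and reduce everything to the singular values of the two factors. First I would note that $\mathcal{F}$ is unitary, so conjugation by $\mathcal{F}$ is an isometry for the spectral norm and, more to the point, preserves the whole list of singular values. Hence $\mathcal{F}\diag(e_j(x))\mathcal{F}^{-1}$ has exactly the same singular values as $\diag(e_j(x))$, namely the numbers $|e_j(x_i)|$ ranging over the grid points $x_i$; in particular its largest and smallest singular values are $\max_x|e_j(x)|$ and $\min_x|e_j(x)|$. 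Likewise $\diag(g_k(\xi))$ has singular values $|g_k(\xi)|$, with extremes $\max_\xi|g_k(\xi)|$ and $\min_\xi|g_k(\xi)|$.

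Next I would combine the two factors. For the upper bound, submultiplicativity of the spectral norm gives
$$\sigma_{\max}(B_{jk}) = \norm{B_{jk}} \leq \norm{\mathcal{F}\diag(e_j(x))\mathcal{F}^{-1}}\,\norm{\diag(g_k(\xi))} = \max_x|e_j(x)|\,\max_\xi|g_k(\xi)|.$$
For the lower bound, for any unit vector $v\in\cplexes^n$ I would estimate directly
$$\norm{B_{jk}v} \geq \sigma_{\min}\!\big(\mathcal{F}\diag(e_j(x))\mathcal{F}^{-1}\big)\,\norm{\diag(g_k(\xi))v} \geq \min_x|e_j(x)|\,\min_\xi|g_k(\xi)|,$$
using that for a square matrix $C$ one has $\norm{Cw}\geq\sigma_{\min}(C)\norm{w}$; taking the infimum over unit $v$ yields $\sigma_{\min}(B_{jk}) \geq \min_x|e_j(x)|\,\min_\xi|g_k(\xi)|$.

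There is essentially no obstacle here: the proposition is a routine consequence of the unitary invariance of singular values together with the behaviour of singular values under products of square matrices. The only point deserving a word of care is the lower bound $\sigma_{\min}(CD)\geq\sigma_{\min}(C)\,\sigma_{\min}(D)$, which I would justify by the direct norm estimate above rather than by passing to inverses, so that it remains valid (trivially, since then the right-hand side is $0$) even when $e_j$ or $g_k$ vanishes somewhere on the grid.
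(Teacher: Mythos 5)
Your proof is correct and takes essentially the same route as the paper: both rely on the factorization in Equation (\ref{eq:symfactors}) and the unitary invariance of singular values, so that the two factors have singular values $|e_j(x)|$ and $|g_k(\xi)|$ respectively. The paper simply invokes the min-max theorem for the resulting product bounds, whereas you verify those bounds by direct norm estimates; the substance is the same.
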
 
\begin{proof}
In Equation (\ref{eq:symfactors}), $\mathcal{F} \diag(e^j(x)) \mathcal{F}^{-1}$ has singular values $|e_j(x)|$ as $x$ varies over $X$, defined at the end of Section \ref{sec:pdobasics}. The result follows from the min-max theorem.
\end{proof}

As an example, suppose $a(x,\xi)$ is smooth and periodic in both $x$ and $\xi$. It is well-known that a Fourier series is good expansion scheme because the smoother $a(x,\xi)$ is as a periodic function in $x$, the faster its Fourier coefficients decay, and less is lost when we truncate the Fourier series. Hence, we pick\footnote{Actually, $\exp(2\pi i k \xi_0/(2\xi_0+1))$ does not vary with $\xi$, and we can use $\vphi(\xi)=\xi/(2\xi_0+1)$.}
\begin{equation}\label{eq:fourierexpand}
e_j(x) = e^{2\pi i j \cdot x};\quad g_k(\xi)= e^{2\pi i k \cdot \vphi(\xi)},
\end{equation}
where $\vphi(\xi) = (\xi+\xi_0)/(2\xi_0+1)$ maps $\xi$ into $[0,1]^d$.

Due to Proposition \ref{thm:syminner}, $N=\E M$ is a multiple of the identity matrix and $\kappa(\mathcal{B})=1$ where $\mathcal{B}=\{B_{jk}\}$. It is also immediate from Proposition \ref{thm:symspec1} that $\lambda(B_{jk})=1$ for every $j,k$, and $\lambda(\mathcal{B})=1$. The optimal condition numbers of this $\mathcal{B}$ make it suitable for matrix probing.

\subsection{Chebyshev expansion of symbols}\label{sec:chebexpand}
The symbols of differential operators are polynomials in $\xi$ and nonperiodic. When probing these operators, a Chebyshev expansion in $\xi$ is in principle favored over a Fourier expansion, which may suffer from the Gibbs phenomenon. However, as we shall see, $\kappa(\mathcal{B})$ grows with $p$ and can lead to ill-conditioning.

For simplicity, assume that the symbol is periodic in $x$ and that $e_j(x)=e^{2\pi i j\cdot x}$. Applying Proposition \ref{thm:a2mat}, we see that $B_{jk}$ is a matrix with a displaced diagonal and its singular values are $(g_k(\xi))_{\xi\in \Xi}$. (Recall that we denote the matrix representation (in Fourier basis) of $B_{jk}$ as $B_{jk}$ as well.)

Let $T_k$ be the $k$-th Chebyshev polynomial. In 1D, we can pick
\begin{equation}\label{eq:cheb1d}
g_k(\xi) = T_k(\xi/\xi_0) \mbox{ for } k=1,\ldots,K.
\end{equation}

Define $\norm{T_k}_2=(\int_{z=-1}^1 T_k(z)^2 dz)^{1/2}$. By approximating sums with integrals, $\lambda(B_{jk})\simeq \sqrt{2} \norm{T_k}_{2}^{-1}=\left(\frac{4 k^2-1}{2 k^2 -1}\right)^{1/2}$. Notice that there is no $(1-z^2)^{-1/2}$ weight factor in the definition of $\norm{T_k}_2$ because $e_j(x)T_k(\xi)$ is treated as a pseudodifferential symbol and has to be evaluated on the \emph{uniform} grid. In practice, this approximation becomes very accurate with larger $n$ and we see no need to be rigorous here. As $k$ increases, $\lambda(B_{jk})$ approaches $\sqrt{2}$. More importantly, $\lambda(B_{jk})\leq \lambda(B_{j1})$ for any $j,k$, so
$$\lambda(\mathcal{B})=\sqrt{3}.$$

Applying the same technique to approximate the sum $\inner{g_k}{g_{k'}}$, we find that
$\inner{g_k}{g_{k'}} \propto (1-(k+k')^2)^{-1}+(1-(k-k')^2)^{-1}$ when $k+k'$ is even, and zero otherwise. 
We then compute $N=\E M$ for various $K$ and plot $\kappa(\mathcal{B})$ versus $K$, the number of Chebyshev polynomials. As shown in Figure \ref{fig:chebkappa}(a),
$$\kappa(\mathcal{B})\simeq 1.3K.$$

This means that if we expect to recover $p=\tilde{O}(n)$ coefficients, we must keep $K$ fixed. Otherwise, if $p=K^{2}$, only $p=\tilde{O}(n^{1/2})$ are guaranteed to be recovered by Theorem \ref{thm:main}.

\begin{figure}[tb]
\centering
\includegraphics[scale=0.75]{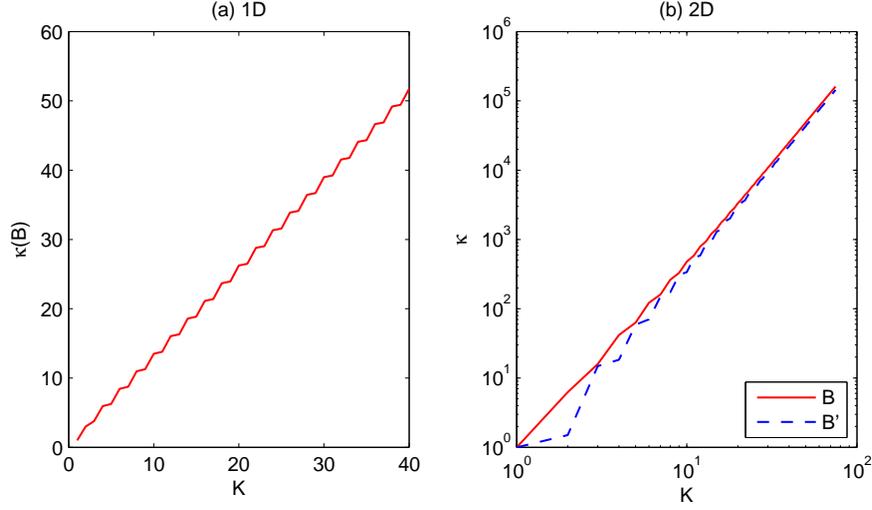} 
\caption{Let $K$ be the number of Chebyshev polynomials used in the expansion of the symbol, see Equation (\ref{eq:cheb1d}) and (\ref{eq:cheb2d}). Observe that in 1D, $\kappa(\mathcal{B})=O(K)$ while in 2D, $\kappa(\mathcal{B}) = O(K^3)$. These condition numbers mean that we cannot expect to retrieve $p=\tilde{O}(n)$ parameters unless $K$ is fixed and independent of $p,n$. \label{fig:chebkappa}}
\end{figure}

In 2D, a plausible expansion is
\begin{equation}\label{eq:cheb2d}
g_{k}(\xi)=e^{i k_1 \arg \xi} T_{k_2}(\vphi(\norm{\xi})) \mbox{ for } 1\leq k_2 \leq K
\end{equation}
where $k=(k_1,k_2)$ and $\vphi(r)=(\sqrt{2}r/\xi_0)-1$ maps $\norm{\xi}$ into $[-1,1]$. We call this the ``Chebyshev on a disk'' expansion.

The quantity $\lambda(B_{jk})$ is approximately $2\left(\int_{x=-1}^1 \int_{y=-1}^1 T_k(\psi(x,y))^2 dx \, dy\right)^{-1/2}$ 
where $\psi(x,y)=(2x^2+2y^2)^{1/2}-1$. The integral is evaluated numerically and appears to converge\footnote{This is because when we truncate the disk of radius $\xi_0\sqrt{2}$ to a square of length $2\xi_0$, most is lost along the vertical axis and away from the diagonals. However, for large $k$, $T_k$ oscillates very much and the truncation does not matter. If we pretend that the square is a disk, then we are back in the 1D case where the answer approaches $\sqrt{2}$ for large $k$.}  to $\sqrt{2}$ for large $k_2$. Also, $k_2=1$ again produces the worst $\lambda(B_{jk})$ and
$$\lambda(\mathcal{B})\leq 2.43.\footnote{The exact value is $2(4-\frac{8}{3}\sqrt{2} \sinh^{-1}(1))^{-1/2}.$}$$

As for $\kappa(\mathcal{B})$, observe that when $k_1 \neq k'_1$, $\inner{g_{k_1 k_2}}{g_{k'_1 k'_2}}=\pm 1$ due to symmetry\footnote{The $\xi$ and $-\xi$ terms cancel each other. Only $\xi=0$ contributes to the sum.}, whereas when $k_1=k'_1$, the inner product is proportional to $n$ and is much larger. As a result, the $g_k$'s with different $k_1$'s hardly interact and in studying $\kappa(\mathcal{B})$, one may assume that $k_1=k'_1=0$. To improve $\kappa(\mathcal{B})$, we can normalize $g_k$ such that the diagonal entries of $N$ are all ones, that is $g'_k(\xi) = g_k(\xi)/\norm{g_k(\xi)}$.

This yields another set of basis matrices $\mathcal{B}'$. Figure \ref{fig:chebkappa}(b) reveals that $$\kappa(\mathcal{B})=O(K^{3}) \mbox{ and }\kappa(\mathcal{B}')\simeq \kappa(\mathcal{B}).$$

The latter can be explained as follows: we saw earlier that $\inner{B_{jk}}{B_{jk}}$ converges as $k_2$ increases, so the diagonal entries of $N$ are about the same and the normalization is only a minor correction.

If $a(x,\xi)$ is expanded using the same number of basis functions in each direction of $x$ and $\xi$, i.e., $K=p^{1/4}$, then Theorem \ref{thm:main} suggests that only $p=\tilde{O}(n^{2/5})$ coefficients can be recovered.

To recap, for both 1D and 2D, $\lambda(\mathcal{B})$ is a small number but $\kappa(\mathcal{B})$ increases with $K$. Fortunately, if we know that the operator being probed is a second order derivative for example, we can fix $K=2$. 

Numerically, we have observed that the Chebyshev expansion can produce dramatically better results than the Fourier expansion of the symbol. More details can be found in Section \ref{sec:2d}.

\subsection{Order of an operator}\label{sec:order}

In standard texts, $A$ is said to be a pseudodifferential operator of order $w$ if its symbol $a(x,\xi)$ is in $C^{\infty}(\reals^d \times \reals^d)$ and for any multi-indices $\alpha,\beta$, there exists a constant $C_{\alpha\beta}$ such that
$$|\pd_{\xi}^{\alpha} \pd_x^{\beta} a(x,\xi) |\leq C_{\alpha \beta} \brac{\xi}^{w-|\alpha|} \mbox{ for all }\xi, \mbox{ where } \brac{\xi}=1+\norm{\xi}.$$

Letting $\alpha=\beta=0$, we see that such operators have symbols that grow or decay as $(1+\norm{\xi})^w$. As an example, the Laplacian is of order 2. The factor 1 prevents $\brac{\xi}$ from blowing up when $\xi=0$. There is nothing special about it and if we take extra care when evaluating the symbol at $\xi=0$, we can use 
$$\brac{\xi} = \norm{\xi}.$$

For forward matrix probing, if it is known a priori that $a(x,\xi)$ behaves like $\brac{\xi}^w$, it makes sense to expand $a(x,\xi) \brac{\xi}^{-w}$ instead. Another way of viewing this is that the symbol of the operator $B_{jk}$ is modified from $e_j(x) g_k(\xi)$ to $e_j(x) g_k(\xi) \brac{\xi}^{w}$ to suit $A$ better.

For backward matrix probing, if $A$ is of order $z$, then $A^{-1}$ is of order $-z$ and we should replace the symbol of $B_{jk}$ with $e_j(x) g_k(\xi) \brac{\xi}^{-w}$. We believe that this small correction has an impact on the accuracy of matrix probing, as well as the condition numbers $\kappa(\mathcal{B}_A)$ and $\lambda(\mathcal{B}_A)$.

Recall that an element of $\mathcal{B}_A$ is $B_{jk} A$. If $A$ is of order $w$ and $B_{jk}$ is of order 0, then $B_{jk} A$ is of order $w$ and $\lambda(B_{jk} A)$ will grow with $n^w$, which will adversely affect the conditioning of matrix probing. However, by multiplying the symbol of $B_{jk}$ by $\brac{\xi}^{-w}$, we can expect $B_{jk} A$ to be order 0 and that $\lambda(B_{jk} A)$ is independent of the size of the problem $n$. The argument is heuristical but we will support it with some numerical evidence in Section \ref{sec:2d}.

\section{Numerical examples}\label{sec:numerical}

We carry out four different experiments. The first experiment suggests that Theorem \ref{thm:mainweak} is not tight. The second experiment presents the output of backward probing in a visual way. In the third experiment, we explore the limitations of backward probing and also tests the Chebyshev expansion of symbols. The last experiment involves the forward probing of the foveation operator, which is related to human vision.

\subsection{1D statistical study}\label{sec:1Dstatstudy}

We are interested in whether the probability bound in Theorem \ref{thm:main} is tight with respect to $p$ and $n$, but as the tail probabilities are small and hard to estimate, we opt to study the first moment instead. In particular, if Theorem \ref{thm:mainweak} captures exactly the dependence of $\E \norm{M-N}/\norm{N}$ on $p$ and $n$, then we would need $n$ to grow faster than $p \log^2 n$ for $\E \norm{M-N}/\norm{N}$ to vanish, assuming $\lambda(\mathcal{B})$ is fixed.

For simplicity, we use the Fourier expansion of the symbol in 1D so that $\lambda(\mathcal{B})=\kappa(\mathcal{B})=1$. Let $J$ be the number of basis functions in both $x$ and $\xi$ and $p=J^2$. Figure \ref{fig:logpowercombo}(a) suggests that $\E \norm{M-N}/\norm{N}$ decays to zero when $n=p \log^c p$ and $c>1$. It follows from the previous paragraph that Theorem \ref{thm:mainweak} cannot be tight.

\begin{figure}[tb]
\centering
\includegraphics[scale=0.75]{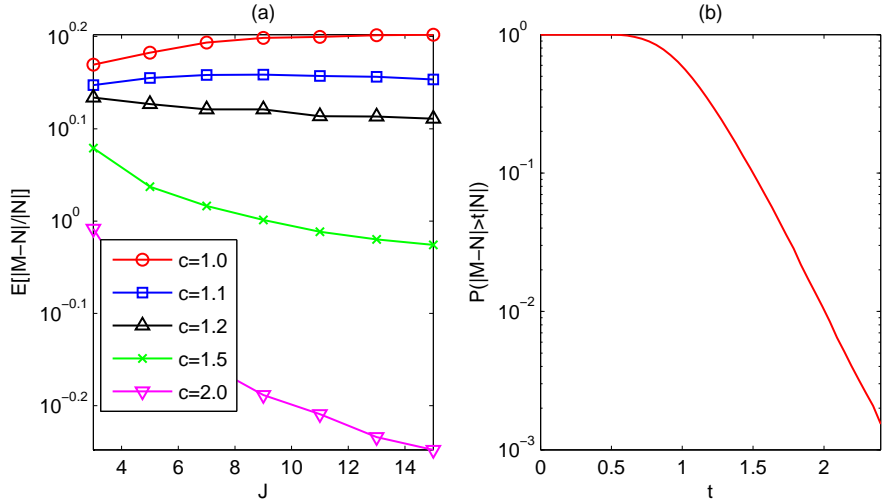} 
\caption{\label{fig:logpowercombo}Consider the Fourier expansion of the symbol. $J$ is the number of basis functions in $x$ and $\xi$, so $p=J^2$. Let $n=p\log^c p$. Figure (a) shows that the estimated $\E \norm{M-N}/\norm{N}$ decays for $c\geq 1.1$ which suggests that Theorem \ref{thm:mainweak} is not tight. In Figure (b), we estimate $\P{\norm{M-N}/\norm{N}>t}$ by sampling $\norm{M-N}/\norm{N}$ $10^5$ times. The tail probability appears to be subgaussian for small $t$ and subexponential for larger $t$.}
\end{figure}

Nevertheless, Theorem \ref{thm:mainweak} is optimal in the following sense. Imagine a more general bound
\begin{equation}\label{eq:boundform}
\E \frac{\norm{M-N}}{\norm{N}} \leq (\log^{\alpha} n) \left(\frac{p}{n} \right)^{\beta} \mbox{ for some } \alpha,\beta>0.
\end{equation}

\begin{figure}[tb]
\centering
\includegraphics[scale=0.75]{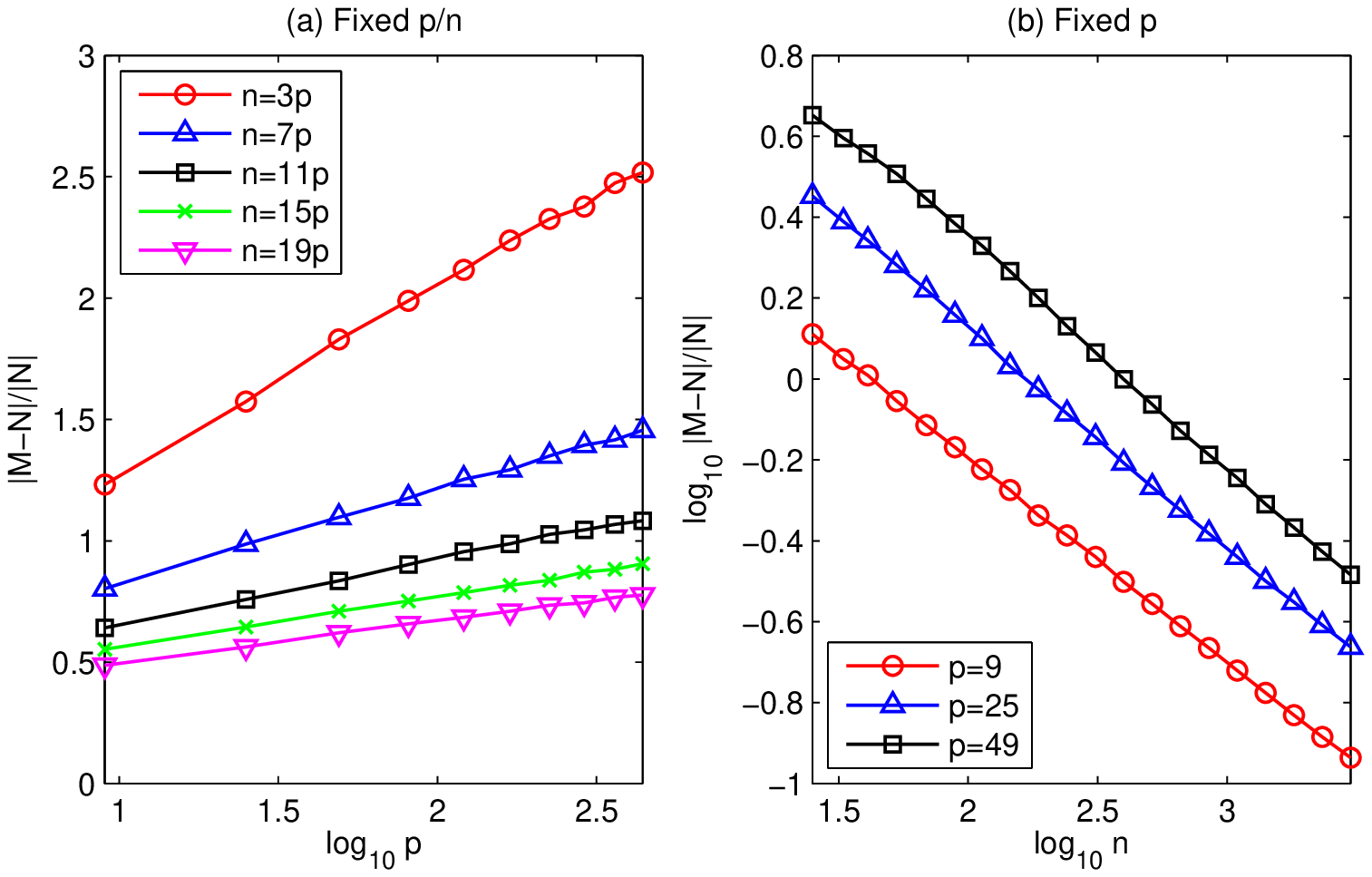} 
\caption{\label{fig:stats}Consider bounding $\E \norm{M-N}/\norm{N}$ by $(\log^{\alpha} n) (p/n)^{\beta}$. There is little loss in replacing $\log n$ with $\log p$ in the simulation. In Figure (a), the estimated $\E \norm{M-N}/\norm{N}$ depends linearly on $\log p$, so $\alpha \geq 1$. In Figure (b), we fix $p$ and find that for large $n$, $\beta=1/2$. The conclusion is that the bound in Theorem \ref{thm:mainweak} has the best $\alpha,\beta$.}
\end{figure}

In Figure \ref{fig:stats}(a), we see that for various values of $p/n$, $\alpha=1$ since the graphs are linear. On the other hand, if we fix $p$ and vary $n$, the log-log graph of Figure \ref{fig:stats}(b) shows that $\beta=1/2$. Therefore, any bound in the form of Equation (\ref{eq:boundform}) is no better than Theorem \ref{thm:mainweak}.

Next, we fix $p=25,n=51$ and sample $\norm{M-N}/\norm{N}$ many times to estimate the tail probabilities. In Figure \ref{fig:logpowercombo}(b), we see that the tail probability of $\P{\norm{M-N}/\norm{N}>t}$ decays as $\exp(-c_1t)$ when $t$ is big, and as $\exp(-c_2 t^2)$ when $t$ is small, for some positive numbers $c_1,c_2$. This behavior may be explained by Rauhut and Tropp's yet published result.

\subsection{Elliptic equation in 1D}\label{sec:1d}

We find it instructive to consider a 1D example of matrix probing because it is easy to visualize the symbol $a(x,\xi)$. Consider the operator
\begin{equation}\label{eq:op1d}
Au(x) = -\frac{d}{dx} \alpha(x) \frac{du(x)}{dx} \mbox{ where } \alpha(x) = 1+0.4\cos(4\pi x) + 0.2 \cos(6\pi x).
\end{equation}

Note that we use periodic boundaries and $A$ is positive semidefinite with a one dimensional nullspace consisting of constant functions.

We probe for $A^{+}$ using Algorithm \ref{alg:probeinv} and the Fourier expansion of its symbol or Equation (\ref{eq:fourierexpand}). Since $A$ is of order $2$, we premultiply $g_k(\xi)$ by $\brac{\xi}^{-2}$ as explained in Section \ref{sec:order}.

\begin{figure}[tb]
\centering
\includegraphics[scale=0.75]{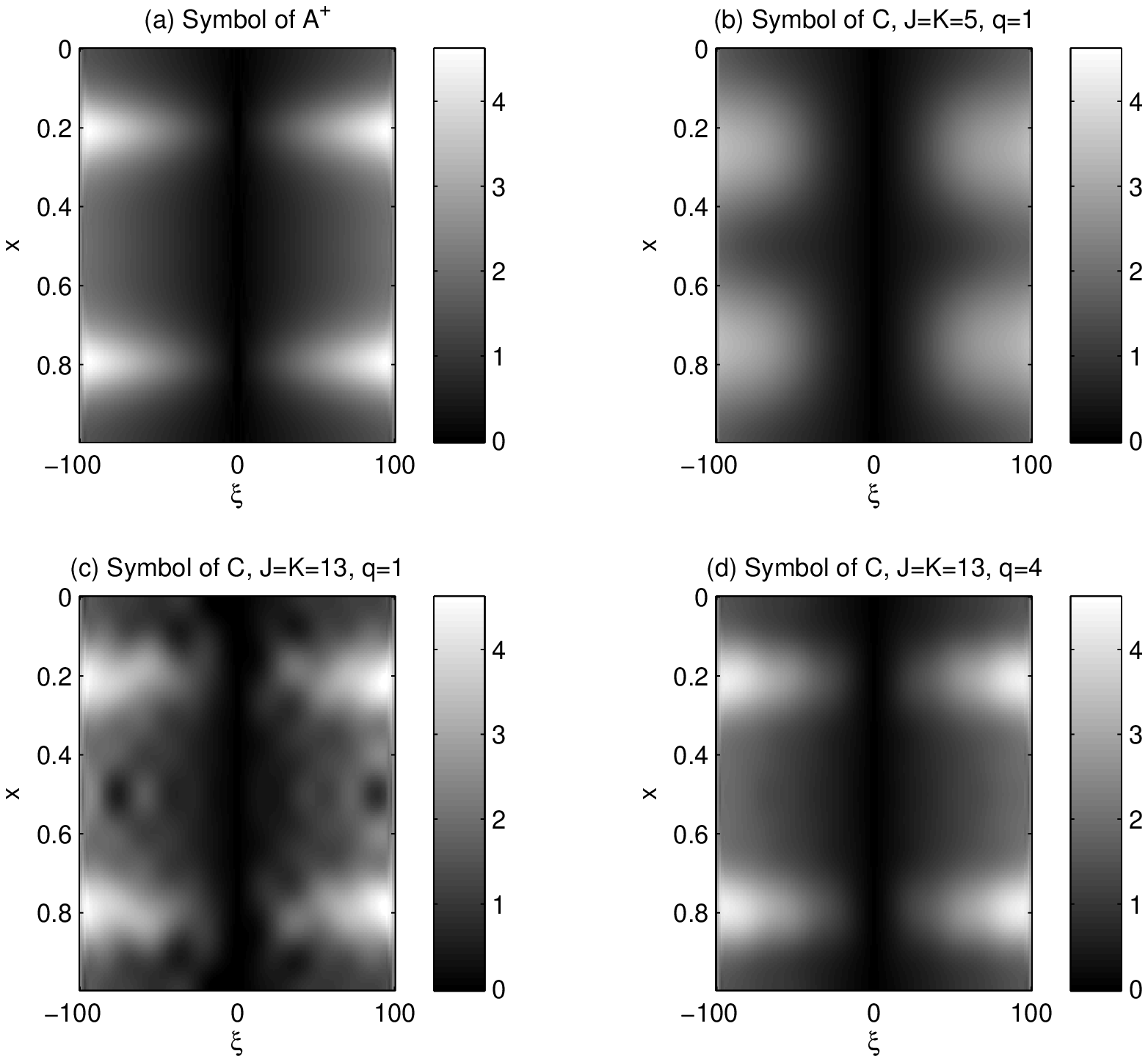} 
\caption{Let $A$ be the 1D elliptic operator in Equation (\ref{eq:op1d}) and $A^{+}$ be its pseudoinverse. Let $C$ be the output of backward matrix probing with the following parameters: $q$ is the number of random vectors applied to $A^{+}$; $J,K$ are the number of $e_j$'s and $g_k$'s used to expand the symbol of $A^{+}$ in Equation (\ref{eq:fourierexpand}). Figure (a) is the symbol of $A^+$. Figure (b) is the symbol of $C$ with $J=K=5$. It lacks the sharp features of Figure (a) because $\mathcal{B}$ is too small to represent $A^{+}$ well. With $J=K=13$, probing with only one random vector leads to ill-conditioning and an inaccurate result in Figure (b). In Figure (c), four random vectors are used and a much better result is obtained. Note that the symbols are multipled by $\brac{\xi}^3$ for better visual contrast. \label{fig:symbol1D}}
\end{figure}

In the experiment, $n=201$ and there are two other parameters $J,K$ which are the number of $e_j$'s and $g_k$'s used in Equation (\ref{eq:fourierexpand}). To be clear, $-\frac{J-1}{2} \leq j \leq \frac{J-1}{2}$ and $-\frac{K-1}{2} \leq k \leq \frac{K-1}{2}$.

Let $C$ be the output of matrix probing. In Figure \ref{fig:symbol1D}(b), we see that $J=K=5$ is not enough to represent $A^{+}$ properly. This is expected because our media $\alpha(x)$ has a bandwidth of 7. We expect $J=K=13$ to do better, but the much larger $p$ leads to overfitting and a poor result, as is evident from the wobbles in the symbol of $C$ in Figure \ref{fig:symbol1D}(c). Probing with four random vectors, we obtain a much better result as shown in Figure \ref{fig:symbol1D}(d).

\subsection{Elliptic Equation in 2D}\label{sec:2d}
In this section, we extend the previous set-up to 2D and address a different set of questions. Consider the operator $A$ defined as
\begin{equation}\label{eq:2delliptic}
Au(x) = -\nabla \cdot \alpha(x) \nabla u(x) \mbox{ where } \alpha(x) = \frac{1}{T}+\cos^2 (\pi \gamma x_1) \sin^2 (\pi \gamma x_2).
\end{equation}

The positive value $T$ is called the contrast while the positive integer $\gamma$ is the roughness of the media, since the bandwidth of $\alpha(x)$ is $2\gamma+1$. Again, we assume periodic boundary conditions such that $A$'s nullspace is exactly the set of constant functions.

Let $C$ be the output of the backward probing of $A$. As we shall see, the quality of $C$ drops as we increase the contrast $T$ or the roughness $\gamma$.

Fix $n=101^2$ and expand the symbol using Equation (\ref{eq:fourierexpand}). Let $J=K$ be the number of basis functions used to expand the symbol in each of its four dimensions, that is $p=J^4$.

\begin{figure}[tb]
\centering
\includegraphics[scale=0.75]{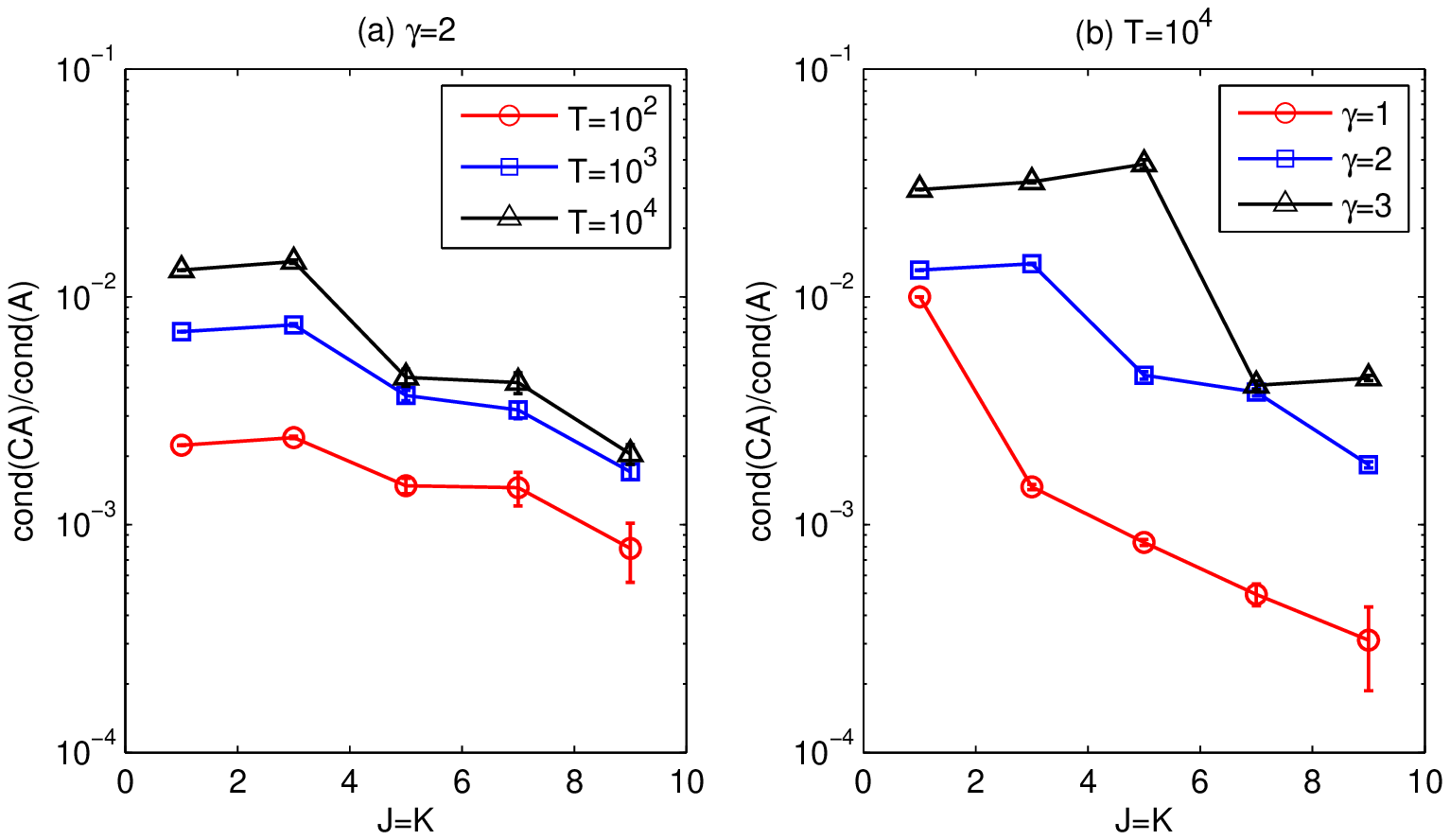} 
\caption{\label{fig:roughness_contrast}Let $A$ be the operator defined in Equation (\ref{eq:2delliptic}) and $C$ be the output of backward probing. In Figure (b), we fix $T=10^4$ and find that as $J$ goes from $2\gamma-1$ to $2\gamma+1$, the bandwidth of the media, the quality of the preconditioner $C$ improves by a factor between $10^{0.5}$ and $10$. In Figure (a), we fix $\gamma=2$ and find that increasing the contrast worsens $\cond(CA)/\cond(A)$. Nevertheless, the improvement between $J=3$ and $J=5$ becomes more distinct. The error bars correspond to $\hat{\sigma}$ where $\hat{\sigma}^2$ is the estimated variance. They indicate that $C$ is not just good on average, but good with high probability.}
\end{figure}

In Figure \ref{fig:roughness_contrast}(b), we see that between $J=2\gamma-1$ and $J=2\gamma+1$, the bandwidth of the media, there is a marked improvement in the preconditioner, as measured by the ratio $\cond(CA)/\cond(A)$.\footnote{Since $A$ has one zero singular value, $\cond(A)$ actually refers to the ratio between its largest singular value and its second smallest singular value. The same applies to $CA$.}

On the other hand, Figure \ref{fig:roughness_contrast}(a) shows that as the contrast increases, the preconditioner $C$ degrades in performance, but the improvement between $J=2\gamma-1$ and $2\gamma+1$ becomes more pronounced.


The error bars in Figure \ref{fig:roughness_contrast} are not error margins but $\hat{\sigma}$ where $\hat{\sigma}^2$ is the unbiased estimator of the variance. They indicate that $\cond(CA)/\cond(A)$ is tightly concentrated around its mean, provided $J$ is not too much larger than is necessary. For instance, for $\gamma=1$, $J=3$ already works well but pushing to $J=9$ leads to greater uncertainty.

Next, we consider \emph{forward probing} of $A$ using the ``Chebyshev on a disk'' expansion or Equation (\ref{eq:cheb2d}). Let $m$ be the order correction, that is we multiply $g_k(\xi)$ by $\brac{\xi}^m =\norm{\xi}^m$. Let $C$ be the output of the probing and $K$ be the number of Chebyshev polynomials used.

\begin{figure}[tb]
\centering
\includegraphics[scale=0.75]{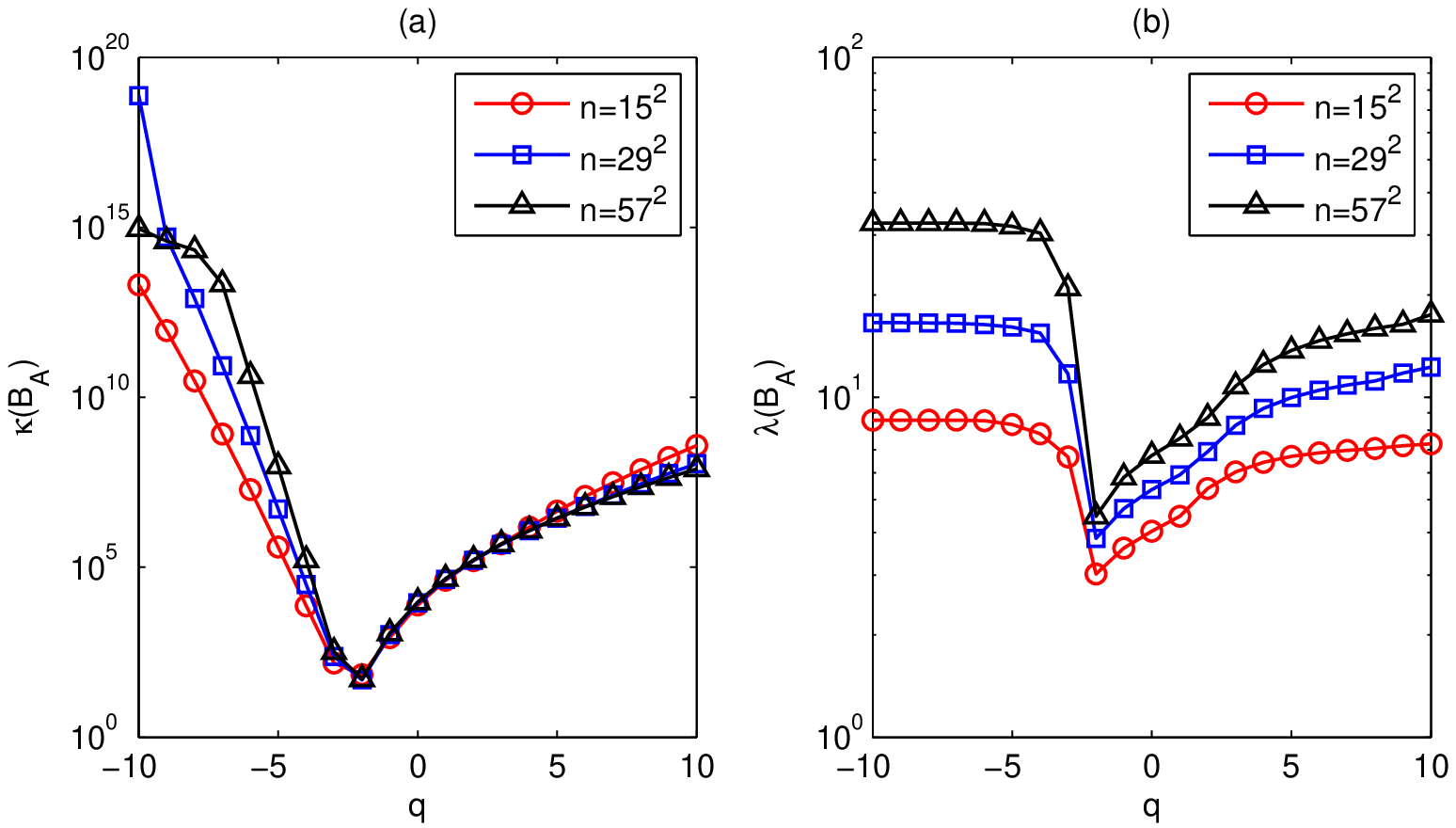} 
\caption{\label{fig:kappalambda} Consider the backward probing of $A$ in Equation (\ref{eq:2delliptic}), a pseudodifferential oeprator of order 2. Perform order correction by multiplying $g_k(\xi)$ by $\brac{\xi}^q$ in the expansion of the symbol. See Section \ref{sec:order}. Observe that at $q=-2$, the condition numbers $\lambda(\mathcal{B}_A)$ and $\kappa(\mathcal{B}_A)$ are minimized and hardly grow with $n$.}
\end{figure}

Fix $n=55^2$, $T=10$, $\gamma=2$ and $J=5$. For $m=0$ and $K=3$, i.e., no order correction and using up to quadratic polynomials in $\xi$, we obtain a relative error $\norm{C-A}/\norm{A}$ that is less than $10^{-14}$. On the other hand, using Fourier expansion, with $K=5$ in the sense that $-\frac{K-1}{2}\leq k_1,k_2 \leq \frac{K-1}{2}$, the relative error is on the order of $10^{-1}$. The point is that in this case, $A$ has an exact ``Chebyshev on a disk'' representation and probing using the correct $\mathcal{B}$ enables us to retrieve the coefficients with negligible errors.

Finally, we consider backward probing with the Chebyshev expansion. We use $J=5$, $\gamma=2$ and $T=10$. Figure \ref{fig:kappalambda} shows that when $m=-2$, the condition numbers $\lambda(\mathcal{B}_A)$ and $\kappa(\mathcal{B}_A)$ are minimized and hardly increases with $n$. This emphasizes the importance of knowing the order of the operator being probed.

\subsection{Foveation}
In this section, we forward-probe for the foveation operator, a space-variant imaging operator \cite{chang00}, which is particularly interesting as a model for human vision. Formally, we may treat the foveation operator $A$ as a Gaussian blur with a width or standard deviation that varies over space, that is
\begin{equation}\label{eq:fov}
Au(x) = \int_{\reals^2}K(x,y) u(y)dy \mbox{ where } K(x,y)=\frac{1}{w(x)\sqrt{2\pi}}\exp\left( \frac{-\norm{x-y}^2}{2 w^2(x)}\right),
\end{equation}
where $w(x)$ is the width function which returns only positive real numbers.

The resolution of the output image is highest at the point where $w(x)$ is minimal. Call this point $x_0$. It is the point of fixation, corresponding to the center of the fovea. For our experiment, the width function takes the form of $w(x) = (\alpha\norm{x-x_0}^2 +\beta)^{1/2}$. Our images are $201\times 201$ and treated as functions on the unit square. We choose $x_0=(0.5,0.5)$ and $\alpha,\beta>0$ such that $w(x_0)=0.003$ and $w(1,1)=0.012$.

The symbol of $A$ is $a(x,\xi)=\exp(-2\pi^2 w(x)^2\norm{\xi}^2)$, and we choose to use a Fourier series or Equation (\ref{eq:fourierexpand}) for expanding it. Let $C$ be the output of matrix probing and $z$ be a standard test image. Figure \ref{fig:fov}(c) shows that the relative $\ell^2$ error $\norm{Cz-Az}_{\ell^2}/\norm{Az}_{\ell^2}$ decreases exponentially as $p$ increases. In general, forward probing yields great results like this because we know its symbol well and can choose an appropriate $\mathcal{B}$.

\begin{figure}[tb]
\centering
\includegraphics{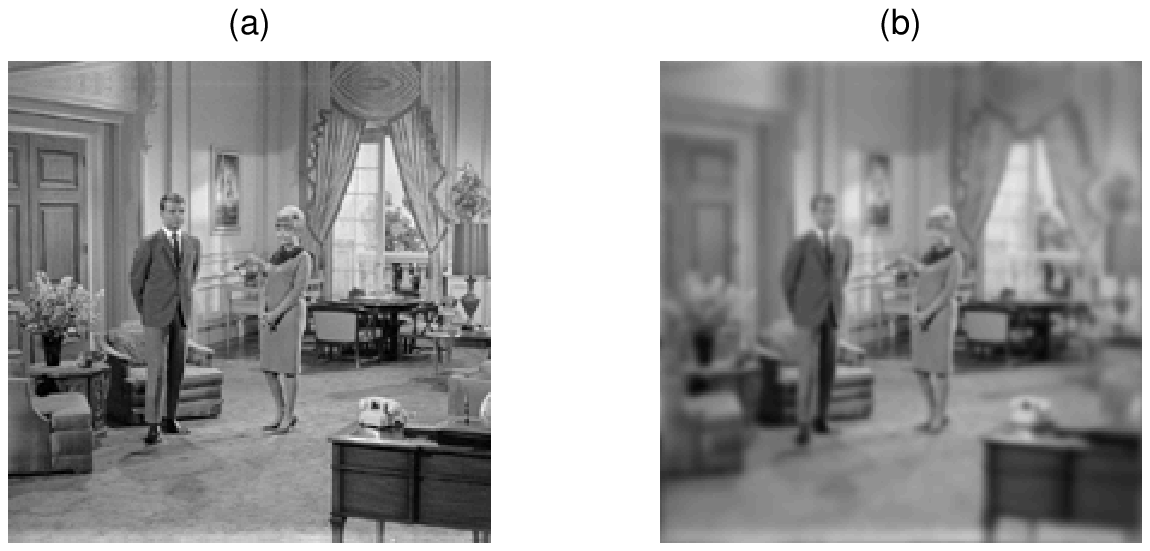} 
\includegraphics[scale=0.75]{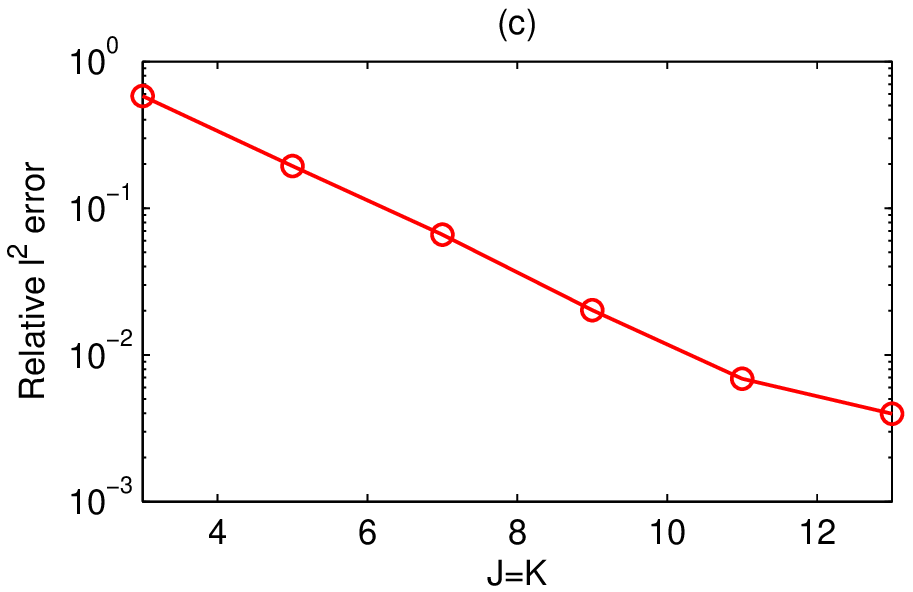} 
\caption{Let $A$ be the foveation operator in Equation (\ref{eq:fov}) and $C$ be the output of the forward probing of $A$. Figure (a) is the test image $z$. Figure (b) is $Cz$ and it shows that $C$ behaves like the foveation operator as expected. Figure (c) shows that the relative $\ell^2$ error (see text) decreases exponentially with the number of parameters $p=J^4$. \label{fig:fov}}
\end{figure}

\subsection{Inverting the wave equation Hessian}\label{sec:hessian}
In seismology, it is common to recover the model parameters $m$, which describe the subsurface, by minimizing the least squares misfit between the observed data and $F(m)$ where $F$, the forward model, predicts data from $m$.

Methods to solve this problem can be broadly categorized into two classes: steepest descent or Newton's method. The former takes more iterations to converge but each iteration is computationally cheaper. The latter requires the inversion of the Hessian of the objective function, but achieves quadratic convergence near the optimal point.

In another paper, we use matrix probing to precondition the inversion of the Hessian. Removing the nullspace component from the noise vector is more tricky (see Algorithm \ref{alg:probeinv}) and involves checking whether ``a curvelet is visible to any receiver'' via raytracing. For details on this more elaborate application, please refer to \cite{demanet2011matrix}.

\section{Conclusion and future work}
When a matrix $A$ with $n$ columns belongs to a specified $p$-dimensional subspace, say $A=\sum_{i=1}^p c_i B_i$, we can probe it with a few random vectors to recover the coefficient vector $c$.

Let $q$ be the number of random vectors used, $\kappa$ be the condition number of the Gram matrix of $B_1,\ldots,B_p$ and $\lambda$ be the ``weak condition number'' of each $B_i$ (cf. Definition \ref{def:weakcondition}) which is related to the numerical rank. From Theorem \ref{thm:main} and Section \ref{sec:multipleprobes}, we learn that when $nq \propto p(\kappa \lambda \log n)^2$, then the linear system that has to be solved to recover $c$ (cf. Equation (\ref{eq:tosolve})) will be well-conditioned with high probability.  Consequently, the reconstruction error is small by Proposition \ref{thm:accuracy}.

The same technique can be used to compute an approximate $A^{-1}$, or a preconditioner for inverting $A$. In \cite{demanet2011matrix}, we used it to invert the wave equation Hessian --- here we demonstrate that it can also be used to invert elliptic operators in smooth media (cf. Sections \ref{sec:1d} and \ref{sec:2d}).

Some possible future work include the following.
\begin{enumerate}
\item Extend the work of Pfander, Rauhut et. al. \cite{pfander2008identification, pfander2010sparsity,pfander2011restricted}. These papers are concerned with sparse signal recovery. They consider the special case where $\mathcal{B}$ contains $n^2$ matrices each representing a time-frequency shift, but $A$ is an unknown linear combination of only $p$ of them. The task is to identify these $p$ matrices and the associated coefficients by applying $A$ to noise vectors. Our proofs may be used to establish similar recovery results for a more general $\mathcal{B}$. However, note that in \cite{pfander2010sparsity}, Pfander and Rauhut show that $n \propto p \log n$ suffices, whereas our main result requires an additional log factor.

\item Build a framework for probing $f(A)$ interpreted as a Cauchy integral
$$f(A)=\frac{1}{2\pi i } \oint_{\Gamma} f(z) (zI-A)^{-1} dz,$$
where $\Gamma$ is a closed curve enclosing the eigenvalues of $A$. For more on approximating matrix functions, see \cite{hale:2505, higham2008functions}.

\item Consider expansion schemes for symbols that highly oscillate or have singularities that are well-understood.
\end{enumerate}

\appendix
\begin{appendices}
\section{Linear algebra}

Recall the definitions of $\kappa(\mathcal{B})$ and $\lambda(\mathcal{B})$ at the beginning of the paper. The following concerns probing with multiple vectors (cf. Section \ref{sec:multipleprobes}).

\begin{proposition}\label{thm:condunchanged}
Let $I_q\in \cplexes^{q\times q}$ be the identity. Let $\mathcal{B}=\{B_1,\ldots,B_p\}$. Let $B'_j = I_q \otimes B_j$ and $\mathcal{B}'=\{B'_1,\ldots,B'_p\}$. Then $\kappa(\mathcal{B})=\kappa(\mathcal{B}')$ and $\lambda(\mathcal{B})=\lambda(\mathcal{B}')$.
\end{proposition}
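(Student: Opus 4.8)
The plan is to reduce everything to two elementary facts about Kronecker products: for conformable matrices, $(I_q\otimes B)^*(I_q\otimes C)=I_q\otimes(B^*C)$, and the spectral and Frobenius norms satisfy $\norm{I_q\otimes B}=\norm{B}$ and $\norm{I_q\otimes B}_F=q^{1/2}\norm{B}_F$. Both are standard, and I would simply state them (the first follows from the mixed-product property $(A\otimes B)(C\otimes D)=(AC)\otimes(BD)$; the norm identities follow from the singular values of $I_q\otimes B$ being the singular values of $B$ each repeated $q$ times).

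First I would handle $\kappa$. Let $N$ and $N'$ be the Gram matrices of $\mathcal{B}$ and $\mathcal{B}'$ in the Frobenius inner product, as in Definition \ref{def:kappa}. Then
$$N'_{jk}=\inner{B'_j}{B'_k}=\tr\big((I_q\otimes B_j)^*(I_q\otimes B_k)\big)=\tr\big(I_q\otimes(B_j^*B_k)\big)=q\,\tr(B_j^*B_k)=q\,N_{jk}.$$
Hence $N'=qN$, and since scaling a Hermitian positive semidefinite matrix by a positive constant leaves its condition number unchanged, $\kappa(\mathcal{B}')=\cond(N')=\cond(N)=\kappa(\mathcal{B})$.

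Next I would handle $\lambda$. If each $B_j\in\cplexes^{m\times n}$, then $B'_j=I_q\otimes B_j\in\cplexes^{qm\times qn}$ has $qn$ columns, so by Definition \ref{def:weakcondition},
$$\lambda(B'_j)=\frac{\norm{I_q\otimes B_j}\,(qn)^{1/2}}{\norm{I_q\otimes B_j}_F}=\frac{\norm{B_j}\,(qn)^{1/2}}{q^{1/2}\norm{B_j}_F}=\frac{\norm{B_j}\,n^{1/2}}{\norm{B_j}_F}=\lambda(B_j).$$
Taking the maximum over $1\le j\le p$ gives $\lambda(\mathcal{B}')=\lambda(\mathcal{B})$, completing the proof.

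There is no real obstacle here; the only thing requiring a word of care is the bookkeeping of the column count $qn$ in the definition of $\lambda$, which is exactly what cancels the $q^{1/2}$ coming from the Frobenius norm of the Kronecker product. Everything else is a direct computation from the definitions.
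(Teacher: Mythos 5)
Your proposal is correct and follows essentially the same route as the paper: show $N'=qN$ so the Gram matrices have equal condition numbers, and observe that the spectral norm is unchanged while the Frobenius norm picks up a factor $q^{1/2}$, which is cancelled by the column count $qn$ in the definition of $\lambda$. You merely spell out the Kronecker-product identities that the paper takes as evident.
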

\begin{proof}
Define $N\in \cplexes^{p \times p}$ such that $N_{jk}=\inner{B_j}{B_k}$. Define $N'\in \cplexes^{p \times p}$ such that $N'_{jk}=\inner{B'_j}{B'_k}$. Clearly, $N'=q N$, so their condition numbers are the same and $\kappa(\mathcal{B})=\kappa(\mathcal{B}')$. 

For any $A=B_j \in \cplexes^{m \times n}$ and $A'=B_j'$, we have $\frac{\norm{A'}(n q)^{1/2}}{\norm{A'}_F}=\frac{\norm{A}(n q )^{1/2}}{\norm{A}_F q^{1/2}}=\frac{\norm{A}n^{1/2}}{\norm{A}_F}$. Hence, $\lambda(\mathcal{B})=\lambda(\mathcal{B}')$. 
\end{proof}

\section{Probabilistic tools}\label{sec:prob}
In this section, we present some probabilistic results used in our proofs. The first theorem is used to decouple homogeneous Rademacher chaos of order 2 and can be found in \cite{pena99, rauhut09} for example.

\begin{theorem}\label{thm:decoupleR}
Let $(u_i)$ and $(\tilde{u}_i)$ be two iid sequences of real-valued random variables and $A_{ij}$ be in a Banach space where  $1\leq i,j\leq n$. There exists universal constants $C_1,C_2>0$ such that for any $s\geq 1$,
\begin{equation}\label{eq:decouple}
\left(\E \norm{\sum_{1\leq i\neq j\leq n} u_i u_j A_{ij}}^s\right)^{1/s} \leq C_1 C_2^{1/s} \left(\E \norm{\sum_{1\leq i, j \leq n} u_i \tilde{u}_j A_{ij}}^s\right)^{1/s}.
\end{equation}
\end{theorem}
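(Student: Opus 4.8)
The plan is to use the classical random-selector decoupling argument (in the style of Bourgain--Tzafriri and de la Pe\~na--Montgomery-Smith), which converts the off-diagonal quadratic chaos $\sum_{i\ne j}u_iu_jA_{ij}$ into a fully decoupled bilinear form at the cost of an absolute constant. Throughout I treat the $u_i$ as symmetric, mean-zero variables (Rademacher, as the name ``Rademacher chaos'' indicates, or Gaussian, which is what the paper actually applies it to); this is genuinely needed, since for $u_i\equiv1$ the inequality is false. I also assume the concatenation $(u_1,\dots,u_n,\tilde u_1,\dots,\tilde u_n)$ is jointly independent, and I recall that on a Banach space $x\mapsto\norm{x}^s$ is convex for $s\ge1$, so Jensen's inequality (conditional and unconditional) is available.

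First I would introduce independent Bernoulli selectors $\delta_1,\dots,\delta_n$, each $1$ with probability $1/2$ and independent of everything else, and set $S=\{i:\delta_i=1\}$, $T=\{i:\delta_i=0\}$. Since $\delta_i,\delta_j$ are independent for $i\ne j$, one has $\E_\delta[\delta_i(1-\delta_j)]=1/4$, and therefore
$$\sum_{1\le i\ne j\le n}u_iu_jA_{ij}=4\,\E_\delta\Big[\sum_{i\in S,\,j\in T}u_iu_jA_{ij}\Big].$$
Taking $s$-th moments in $u$, applying Jensen to pull $\E_\delta$ out of $\norm{\cdot}^s$, and using Fubini to swap $\E_u$ and $\E_\delta$ gives
$$\E_u\Big\|\sum_{i\ne j}u_iu_jA_{ij}\Big\|^s\le 4^s\,\E_\delta\,\E_u\Big\|\sum_{i\in S,\,j\in T}u_iu_jA_{ij}\Big\|^s.$$

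Next, for a fixed realization of $\delta$ the index sets $S$ and $T$ are disjoint, so $(u_i)_{i\in S}$ and $(u_j)_{j\in T}$ are independent; replacing $(u_j)_{j\in T}$ by the independent copy $(\tilde u_j)_{j\in T}$ leaves the distribution unchanged, so the inner expectation equals $\E\|\sum_{i\in S,\,j\in T}u_i\tilde u_jA_{ij}\|^s$. It then remains to pass from the restricted bilinear sum over $S\times T$ to the full sum over all $(i,j)$. For this I would condition on $\mathcal G=\sigma\big((u_i)_{i\in S},(\tilde u_j)_{j\in T}\big)$ and compute $\E[u_i\tilde u_j\mid\mathcal G]$ block by block: it equals $u_i\tilde u_j$ on $S\times T$, and vanishes on each of $S\times S$, $T\times S$, $T\times T$ (and on the diagonal), because in every remaining block at least one factor is one of the variables $(u_i)_{i\in T}$ or $(\tilde u_j)_{j\in S}$, which are independent of $\mathcal G$ and mean zero. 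Hence $\sum_{i\in S,\,j\in T}u_i\tilde u_jA_{ij}=\E\big[\sum_{i,j}u_i\tilde u_jA_{ij}\mid\mathcal G\big]$, and conditional Jensen followed by taking total expectation yields
$$\E\Big\|\sum_{i\in S,\,j\in T}u_i\tilde u_jA_{ij}\Big\|^s\le\E\Big\|\sum_{1\le i,j\le n}u_i\tilde u_jA_{ij}\Big\|^s.$$
Since the right-hand side no longer depends on $\delta$, averaging over $\delta$ is free, and chaining the three displays gives \eqref{eq:decouple} with $C_1=4$ and $C_2=1$; the genuinely $s$-dependent factor $C_2^{1/s}$ only appears in the more general, non-product versions of the statement.

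The routine parts are the selector identity and the two Jensen applications; the only place the probabilistic structure really enters — and hence the point to get right — is the mean-zero/symmetry hypothesis, which is exactly what makes the final ``filling-in'' conditional-expectation step legitimate. The same argument goes through verbatim for any symmetric (not necessarily $\pm1$) summands, so it covers the Gaussian case used elsewhere in the paper directly; for the sharp values of the constants one should simply cite \cite{pena99, rauhut09} rather than optimize the selector argument.
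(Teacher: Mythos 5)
Your proof is correct, but it is worth noting that the paper does not prove this theorem at all: it is quoted as a known result and attributed to \cite{pena99, rauhut09}, with the constants recorded in Remark \ref{remark:C1C2}. What you supply is the standard selector-plus-conditional-expectation decoupling argument (de la Pe\~na--Montgomery-Smith / Bourgain--Tzafriri style), and it is carried out correctly: the identity $\E_\delta[\delta_i(1-\delta_j)]=1/4$ for $i\neq j$, Jensen and Fubini to pass to a random partition $S,T$, replacement of $(u_j)_{j\in T}$ by the independent copy, and the ``filling-in'' step via $\E[u_i\tilde u_j\mid\mathcal G]$, which vanishes off $S\times T$ precisely because the remaining factors are mean zero and independent of $\mathcal G$ (this also disposes of the diagonal, since $u_i$ and $\tilde u_i$ are independent). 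Your constants $C_1=4$, $C_2=1$ match exactly what the paper asserts for the Rademacher case, and you are right to flag that the theorem as literally stated (arbitrary iid real-valued $u_i$) is too strong and needs the mean-zero hypothesis --- that is indeed how the result is used. So relative to the paper you trade a citation for a short self-contained proof, which also makes transparent where the probabilistic structure enters; the cited references buy generality (non-identically distributed entries, higher-order chaos, and reverse inequalities) that the paper does not need.

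One small caveat: your closing claim that the same argument ``covers the Gaussian case used elsewhere in the paper directly'' is an overstatement. The Gaussian decoupling actually invoked in the paper, Theorem \ref{thm:decoupleG}, includes the diagonal terms $\sum_i (u_i^2-1)A_{ii}$ on the left-hand side, and the selector argument as you present it decouples only the off-diagonal sum; handling the diagonal is exactly why the paper appeals to Arcones and Gin\'e (with the much larger $C_2$). For the statement under review, which is off-diagonal only, your proof is complete as is.
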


A homogeneous \emph{Gaussian} chaos is one that involves only products of Hermite polynomials with the same total degree. For instance, a homogeneous Gaussian chaos of order 2 takes the form $\sum_{1\leq i \neq j \leq n} g_i g_j A_{ij} + \sum_{i=1}^n (g_i^2-1) A_{ii}$. It can be decoupled according to Arcones and Gin{\'e} \cite{arcones93}.

\begin{theorem}\label{thm:decoupleG}
Let $(u_i)$ and $(\tilde{u}_i)$ be two iid Gaussian sequences and $A_{ij}$ be in a Banach space where  $1\leq i,j\leq n$. There exists universal constants $C_1,C_2>0$ such that for any $s\geq 1$,
\begin{equation*}
\left(\E \norm{\sum_{1\leq i\neq j\leq n} u_i u_j A_{ij}+\sum_{i=1}^n (u_i^2-1) A_{ii}}^s\right)^{1/s} \leq C_1 C_2^{1/s} \left(\E \norm{\sum_{1\leq i,j \leq s} u_i \tilde{u}_j A_{ij}}^s\right)^{1/s}.
\end{equation*}
\end{theorem}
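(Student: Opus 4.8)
\textbf{Proof proposal for Theorem \ref{thm:decoupleG}.}

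The plan is to reduce the Gaussian case to the already-available Rademacher-type decoupling in the form used by Arcones and Giné, rather than to re-derive the martingale/hypercontractivity machinery from scratch. First I would fix $s\geq 1$ and observe that the left-hand side is, up to the diagonal correction, a homogeneous Gaussian chaos of order $2$; the standard reference point is the abstract decoupling inequality for $U$-statistics of order $2$ in \cite{arcones93}, which states that a (generalized, decoupled) Gaussian chaos dominates and is dominated by its tetrahedral/diagonal-free counterpart with universal constants. So the first step is to record that $\left(\E\norm{\sum_{i\neq j}u_iu_jA_{ij}+\sum_i(u_i^2-1)A_{ii}}^s\right)^{1/s}$ is comparable to $\left(\E\norm{\sum_{i\neq j}u_i\tilde u_jA_{ij}}^s\right)^{1/s}$ with the decoupled diagonal already absorbed, because $\E[u_i\tilde u_j]=0$ so the decoupled object has no diagonal term to correct.

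The key steps, in order, are: (1) symmetrize if necessary — since $(u_i)$ is already symmetric this is free — and write the non-decoupled chaos as a quadratic form $\sum_{i,j}u_iu_jA_{ij}$ minus $\sum_i u_i^2A_{ii}$, then add back $\sum_i A_{ii}$ to land on the Hermite-normalized expression in the statement; (2) invoke the Arcones–Giné decoupling for the off-diagonal part $\sum_{i\neq j}u_iu_jA_{ij}$, obtaining an upper bound by $C_1C_2^{1/s}\left(\E\norm{\sum_{i\neq j}u_i\tilde u_jA_{ij}}^s\right)^{1/s}$; (3) handle the diagonal: show $\E\norm{\sum_i(u_i^2-1)A_{ii}}^s$ is itself controlled by the decoupled bilinear form — the cleanest route is to note $(u_i^2-1)$ has the same first two moments structure as a rescaled product $u_i\tilde u_i$ only in expectation, so instead one bounds $\sum_i(u_i^2-1)A_{ii}$ directly via a one-dimensional Gaussian chaos / Hanson–Wright-type estimate and absorbs it into the constants, or (4) alternatively, follow the packaging in \cite{arcones93} that already includes the Hermite diagonal on the left-hand side, so that (2) and (3) are a single application. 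I would present it via this single-application route to keep the constants $C_1,C_2$ genuinely universal and consistent with Theorem \ref{thm:decoupleR}.

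Finally, I would restrict the decoupled index range: the statement writes $\sum_{1\leq i,j\leq s}$, so I would note that extending or restricting the summation range does not affect the inequality direction once $A_{ij}$ are arbitrary Banach-space elements (one may pad with zeros), and that the dependence on $s$ enters only through $C_2^{1/s}$, which is bounded for $s\geq 1$. The main obstacle I anticipate is the diagonal term: unlike the Rademacher case where $u_i^2=1$ deterministically kills the diagonal, here $u_i^2-1$ is a genuine nonzero mean-zero random variable, and one must verify that the decoupled object on the right still dominates it with the \emph{same} universal constants — this is exactly the content of the Arcones–Giné result for Gaussian chaos, so the honest version of the proof is essentially a careful citation plus the bookkeeping to match our normalization (Hermite polynomials $H_2(u_i)=u_i^2-1$, $H_1(u_i)H_1(u_j)=u_iu_j$) to theirs.
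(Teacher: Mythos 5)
Your proposal is correct and matches the paper's treatment: the paper does not prove this statement but simply invokes the Arcones--Gin\'e decoupling result for Gaussian chaos of order 2 (with the Hermite-normalized diagonal $u_i^2-1$ already built in), which is exactly your preferred single-application route (4), with the constants read off from Equation (2.6) of \cite{arcones93} as noted in Remark \ref{remark:C1C2}. Your observation that the index range ``$1\leq i,j\leq s$'' should be $1\leq i,j\leq n$ is right --- it is a typo in the statement --- and your fallback route of decoupling only the off-diagonal part and absorbing the diagonal separately would need extra care, but since you discard it in favor of the direct citation, the argument as presented agrees with the paper.
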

\begin{remark}\label{remark:C1C2}
For Rademacher chaos, $C_1=4$ and $C_2=1$. For Gaussian chaos, we can integrate Equation (2.6) of \cite{arcones93} (with $m=2$) to obtain $C_1=2^{1/2}$ and $C_2=2^{14}$. Better constants may be available.
\end{remark}

We now proceed to the Khintchine inequalties. Let $\norm{\cdot}_{C_s}$ denote the $s$-Schatten norm. Recall that $\norm{A}_{C_s} = (\sum_i |\sigma_i|^s)^{1/s}$ where $\sigma_i$ is a singular value of $A$. The following is due to Lust-Piquard and Pisier \cite{lust86,lust91}.
\begin{theorem}\label{thm:khintchineone}
Let $s\geq 2$ and $(u_i)$ be a Rademacher or Gaussian sequence. Then for any set of matrices $\{A_i\}_{1\leq i \leq n}$,
$$\left(\E \norm{\sum_{i=1}^n u_i A_{i} }_{C_s}^s \right)^{1/s}\leq s^{1/2} \max \left( \norm{(\sum_{i=1}^n A_{i}^* A_{i})^{1/2}}_{C_s}, \norm{(\sum_{i=1}^n A_{i} A_{i}^*)^{1/2}}_{C_s}\right).$$
\end{theorem}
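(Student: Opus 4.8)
\emph{Proof proposal.} This is the non-commutative Khintchine inequality of Lust-Piquard and Pisier, and the honest plan is to quote it from \cite{lust86,lust91}; below I sketch the shape a self-contained argument would take. Write $S=\sum_{i=1}^n u_i A_i$, and call the two terms inside the maximum the \emph{column} quantity $\sigma_c=\norm{(\sum_i A_i^* A_i)^{1/2}}_{C_s}$ and the \emph{row} quantity $\sigma_r=\norm{(\sum_i A_i A_i^*)^{1/2}}_{C_s}$, so that the target reads $(\E\norm{S}_{C_s}^s)^{1/s}\le s^{1/2}\max(\sigma_c,\sigma_r)$. Two classical routes are available: a harmonic-analysis one (Lust-Piquard's, realizing $S$ through Riesz transforms / Meyer's multiplier theorem on Gaussian or cube space and reading the estimate off the boundedness of those operators on the Schatten class $C_s$), and an elementary moment method (Buchholz's). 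I would attempt the latter: establish the Gaussian case for even integers $s=2k$ by a moment computation, extend to all real $s\ge 2$ by interpolation, and deduce the Rademacher case by a contraction principle.

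The case $s=2$ is immediate, since $\E\norm{S}_{C_2}^2=\sum_i\norm{A_i}_{C_2}^2=\tr(\sum_i A_i^* A_i)=\sigma_c^2=\sigma_r^2$. For Gaussian $u$ and $s=2k$, I would start from $\norm{S}_{C_{2k}}^{2k}=\tr\big((S^*S)^k\big)$ and expand,
$$\E\norm{S}_{C_{2k}}^{2k}=\sum_{i_1,\ldots,i_{2k}}\E\!\left[u_{i_1}\cdots u_{i_{2k}}\right]\tr\!\left(A_{i_1}^* A_{i_2} A_{i_3}^* A_{i_4}\cdots A_{i_{2k-1}}^* A_{i_{2k}}\right).$$
By Wick's formula this reorganizes into a sum over the $(2k-1)!!$ pair partitions $\pi$ of the slots $1,\ldots,2k$ of the traces obtained by identifying indices according to $\pi$. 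When $\pi$ is non-crossing as a partition of the cyclically ordered slots, the constrained index sum collapses, using cyclicity of the trace together with the elementary fact that for positive semidefinite $B$ one has $\sum_i A_i^* B A_i\le\norm{B}\sum_i A_i^* A_i$ in the Loewner order and its row analogue, into a single trace of the form $\tr\big((\sum_i A_i^* A_i)^{a}(\sum_i A_i A_i^*)^{b}\cdots\big)$, which Hölder's inequality for Schatten norms bounds by $\max(\sigma_c,\sigma_r)^{2k}$. When $\pi$ is crossing, one first reduces to a non-crossing configuration by a reflection/rearrangement of the alternating word; doing this while keeping every constant under control, and summing the $(2k-1)!!\le(2k)^k$ contributions, gives after a $(2k)$-th root the inequality with constant $O(k^{1/2})=O(s^{1/2})$.

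To reach arbitrary $s\ge 2$ I would interpolate: the Schatten-valued spaces $L_p(C_p)$ and the column and row sequence norms all form complex interpolation scales in $1/p$, and the fixed linear map $(A_i)\mapsto S$ --- bounded from the row/column intersection into $L_{2k}(C_{2k})$ with norm $O(\sqrt{2k})$ for every $k$ --- interpolates to boundedness into $L_s(C_s)$ with norm $O(\sqrt s)$ between the two even integers bracketing $s$; the sharp constant $\sqrt s$ comes from the finer analysis in \cite{lust86,lust91}. For a Rademacher sequence $\epsilon$, writing $g_i=\epsilon_i|g_i|$ with independent Gaussians $g_i$ and applying Jensen conditionally on $\epsilon$ (the $\epsilon_i|g_i|$ having the same law as $g_i$) gives the contraction $(\E\norm{\sum_i\epsilon_i A_i}_{C_s}^s)^{1/s}\le\sqrt{\pi/2}\,(\E\norm{\sum_i g_i A_i}_{C_s}^s)^{1/s}$, so the Gaussian estimate transfers up to a bounded factor.

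The main obstacle is plainly the crossing pair partitions in the even-integer step: turning a crossing alternating word into a non-crossing one without losing more than a controlled factor, and then checking that the constant accumulated over all $(2k-1)!!$ partitions is still only $O(\sqrt s)$ and not something that grows faster in $k$, is the technical heart of the matter --- and the reason the other route reaches for Riesz transforms in the first place. This is exactly why we are content here to cite the theorem as it stands.
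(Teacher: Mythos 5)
The paper does not prove this statement at all: it is quoted verbatim as the non-commutative Khintchine inequality of Lust-Piquard and Pisier \cite{lust86,lust91}, with only a remark that the factor $s^{1/2}$ is not optimal (citing Buchholz and others). Your plan to cite the result, together with your sketch of the Buchholz-style moment/pairing route, is therefore consistent with what the paper itself does, and no gap arises on the paper's side.
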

The factor $s^{1/2}$ above is not optimal. See for example \cite{buchholz2001operator} by Buchholz, or \cite{rauhut2010compressive, tropp2008conditioning}.

In \cite{rauhut09}, Theorem \ref{thm:khintchineone} is applied twice in a clever way to obtain a Khintchine inequality for a decoupled chaos of order 2.

\begin{theorem}\label{thm:khintchinetwo}
Let $s \geq 2$ and $(u_i)$ and $(\tilde{u}_i)$ be two independent Rademacher or Gaussian sequences. For any set of matrices $\{A_{ij}\}_{1\leq i,j\leq n}$,
$$\left(\E \norm{\sum_{{1\leq i,j\leq n}} u_i \tilde{u}_j A_{ij} }_{C_s}^s \right)^{1/s}\leq 2^{1/s} s \max ( \norm{Q^{1/2}}_{C_s}, \norm{R^{1/2}}_{C_s}, \norm{F}_{C_{s}},\norm{G}_{C_{s}})$$
where $Q=\sum_{1\leq i,j\leq n} A_{ij}^* A_{ij}$ and $R=\sum_{1\leq i,j \leq n} A_{ij} A_{ij}^*$ and $F,G$ are the block matrices $(A_{ij})_{1\leq i,j \leq n}$, $(A_{ij}^*)_{1\leq i,j \leq n}$ respectively.
\end{theorem}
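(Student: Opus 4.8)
The plan is to invoke the single-sequence Khintchine inequality (Theorem~\ref{thm:khintchineone}) twice, peeling off one Rademacher/Gaussian sequence at a time; this is the ``clever'' double application mentioned above. Write $X=\sum_{1\le i,j\le n}u_i\tilde{u}_j A_{ij}$ and first condition on $(\tilde{u}_j)$, regarding $X=\sum_i u_i B_i$ with $B_i=\sum_j\tilde{u}_j A_{ij}$. Applying Theorem~\ref{thm:khintchineone} in the variables $u$ gives, conditionally on $(\tilde{u}_j)$,
\[
\left(\E_u\norm{X}_{C_s}^s\right)^{1/s}\le s^{1/2}\max\left(\norm{\bigl(\sum_i B_i^*B_i\bigr)^{1/2}}_{C_s},\ \norm{\bigl(\sum_i B_i B_i^*\bigr)^{1/2}}_{C_s}\right).
\]

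The crux is to recognize the two partial sums of squares as genuine Gram matrices of block-reduced objects, so that the \emph{same} inequality applies once more. Let $F(\tilde{u}\otimes I)$ denote the tall block column whose $i$-th block is $B_i$ (here $I$ is the identity of the appropriate size). Then $\sum_i B_i^*B_i=(F(\tilde{u}\otimes I))^*(F(\tilde{u}\otimes I))$, and since the eigenvalues of the right-hand side are the squared singular values of $F(\tilde{u}\otimes I)$, we get $\norm{(\sum_i B_i^*B_i)^{1/2}}_{C_s}=\norm{F(\tilde{u}\otimes I)}_{C_s}$; likewise $\norm{(\sum_i B_i B_i^*)^{1/2}}_{C_s}=\norm{G(\tilde{u}\otimes I)}_{C_s}$, with $F=(A_{ij})$ and $G=(A_{ij}^*)$ the block matrices in the statement. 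Now $F(\tilde{u}\otimes I)=\sum_j\tilde{u}_j C_j$, where $C_j$ is the $j$-th block column of $F$ (the vertical stacking of $A_{1j},\dots,A_{nj}$), and $G(\tilde{u}\otimes I)=\sum_j\tilde{u}_j\tilde{C}_j$ with $\tilde{C}_j$ the $j$-th block column of $G$. Taking $\E_{\tilde{u}}$ and applying Theorem~\ref{thm:khintchineone} a second time to each of these single sums, I would compute the resulting Gram sums directly: $\sum_j C_j^*C_j=\sum_{i,j}A_{ij}^*A_{ij}=Q$ and $\sum_j C_j C_j^*=FF^*$, and symmetrically $\sum_j\tilde{C}_j^*\tilde{C}_j=R$ and $\sum_j\tilde{C}_j\tilde{C}_j^*=GG^*$. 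The $C_s$ norms of the square roots of these are precisely $\norm{Q^{1/2}}_{C_s}$, $\norm{F}_{C_s}$, $\norm{R^{1/2}}_{C_s}$, $\norm{G}_{C_s}$.

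Assembling the two applications, the prefactors multiply to $s^{1/2}\cdot s^{1/2}=s$. The only place where a maximum of two quantities is replaced by their sum is the step $\E_{\tilde{u}}\max(\cdot,\cdot)\le\E_{\tilde{u}}(\cdot)+\E_{\tilde{u}}(\cdot)$ right after the first Khintchine bound, separating the $F$-branch from the $G$-branch; keeping the maxima intact everywhere else, this costs a single factor $2^{1/s}$ in the end:
\[
\E\norm{X}_{C_s}^s\le 2\,s^s\,\max\left(\norm{Q^{1/2}}_{C_s},\ \norm{R^{1/2}}_{C_s},\ \norm{F}_{C_s},\ \norm{G}_{C_s}\right)^s,
\]
and the claimed inequality follows on taking $s$-th roots. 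I expect the main obstacle to be purely the bookkeeping: correctly tracking the block and Kronecker structure so that the iterated sums of squares are identified with $Q$, $R$, $FF^*$, $GG^*$, and being disciplined about passing from $\max$ to a sum only once so the constant comes out exactly $2^{1/s}s$. (The hypothesis $s\ge2$ enters only through Theorem~\ref{thm:khintchineone} itself; the singular-value identities relating $\norm{P^{1/2}}_{C_s}$ to $P$ hold for every $s$.)
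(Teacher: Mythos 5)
Your argument is correct and is exactly the route the paper points to: Theorem~\ref{thm:khintchineone} applied twice (first conditionally in $u$, then in $\tilde{u}$ after identifying $\sum_i B_i^*B_i$ and $\sum_i B_iB_i^*$ with Gram matrices of $F(\tilde{u}\otimes I)$ and $G(\tilde{u}\otimes I)$), with the block-column bookkeeping yielding $Q$, $FF^*$, $R$, $GG^*$ and the single max-to-sum step producing the constant $2^{1/s}s$. This matches the proof of \cite{rauhut09} that the paper cites, so no gaps to report.
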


For Rademacher and Gaussian chaos, higher moments are controlled by lower moments, a property known as ``hypercontractivity'' \cite{arcones93, pena99}. This leads to exponential tail bounds by Markov's inequality as we illustrate below. The same result appears as Proposition 6.5 of \cite{rauhut2010compressive}.

\begin{proposition}\label{thm:highermoments}
Let $X$ be a nonnegative random variable. Let $\sigma,c,\alpha>0$. Suppose $(\E X^s)^{1/s}\leq \sigma c^{1/s} s^{1/\alpha}$ for all $s_0 \leq s < \infty$. Then for any $k>0$ and $u\geq s_0^{1/\alpha}$,
$$\P{X \geq e^{k}\sigma u}\leq c \exp(-k u^{\alpha}).$$
\end{proposition}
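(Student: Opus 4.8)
The plan is to use Markov's inequality applied to a high moment of $X$ and then optimize over the moment order. Since $X\geq 0$ and $x\mapsto x^s$ is increasing on $[0,\infty)$ for $s>0$, for any $s\geq s_0$ and any threshold $t>0$ we have $\P{X\geq t}=\P{X^s\geq t^s}\leq t^{-s}\,\E X^s$. Substituting the hypothesis $(\E X^s)^{1/s}\leq \sigma c^{1/s} s^{1/\alpha}$ yields
$$\P{X\geq t}\leq c\left(\frac{\sigma\, s^{1/\alpha}}{t}\right)^{s}.$$

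Next I would set $t=e^{k}\sigma u$, matching the statement, and choose the moment order $s=u^{\alpha}$. This choice is admissible: the assumption $u\geq s_0^{1/\alpha}$ together with $\alpha>0$ gives $s=u^{\alpha}\geq s_0$, so the moment bound is in force; and since $s$ need not be an integer there are no rounding issues. With $s=u^{\alpha}$ we get $s^{1/\alpha}=u$, hence $\sigma s^{1/\alpha}/t = u/(e^{k}u)=e^{-k}$, and the displayed bound becomes
$$\P{X\geq e^{k}\sigma u}\leq c\,e^{-ks}=c\exp(-k u^{\alpha}),$$
which is exactly the claim.

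The only genuine decision point — what one might call the main obstacle — is recognizing that $s=u^{\alpha}$ is the correct moment order: it is precisely the value that balances the polynomial factor $s^{1/\alpha}$ against the normalization $u$ in the threshold, and it is simultaneously compatible with the constraint $s\geq s_0$ exactly under the stated hypothesis $u\geq s_0^{1/\alpha}$. Note that no hypercontractivity enters at this stage; hypercontractivity is what produces the moment growth rate $s^{1/\alpha}$ in the applications (e.g.\ when combining Theorems \ref{thm:decoupleR}/\ref{thm:decoupleG} with Theorem \ref{thm:khintchinetwo}), but here that growth is assumed, so the proof reduces to the Markov-plus-optimization computation above.
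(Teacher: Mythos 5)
Your proof is correct and is essentially the paper's own argument: Markov's inequality on $X^s$, insertion of the assumed moment bound, and the choice $s=u^{\alpha}$ (admissible since $u\geq s_0^{1/\alpha}$), giving $c\,e^{-ks}=c\exp(-ku^{\alpha})$. No substantive difference from the paper's proof.
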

\begin{proof}
By Markov's inequality, for any $s>0$, $\P{X \geq e^k \sigma u}\leq\frac{\E X^s}{(e^k \sigma u)^s}\leq c \left( \frac{\sigma s^{1/\alpha}}{e^k \sigma u} \right)^{s}$. Pick $s=u^{\alpha}\geq s_0$ to complete the proof.
\end{proof}

\begin{proposition}\label{thm:tailbound}
Let $(u_i)$ be a Rademacher or Gaussian sequence and $C_1,C_2$ be constants obtained from Theorem \ref{thm:decoupleR} or \ref{thm:decoupleG}. Let $\{A_{ij}\}_{1\leq i,j\leq n}$ be a set of $p$ by $p$ matrices, and assume that the diagonal entries $A_{ii}$ are positive semidefinite. Define
 $M=\sum_i u_i u_j A_{ij}$ and $\sigma=C_1 \max(\norm{Q}^{1/2},\norm{R}^{1/2},\norm{F},\norm{G})$ where $Q,R,F,G$ are as defined in Theorem \ref{thm:khintchinetwo}. Then
 $$\P{\norm{M-\E M}\geq e \sigma u}\leq (2 C_2 n p) \exp(-u).$$
\end{proposition}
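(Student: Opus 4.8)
The plan is to recognize $M-\E M$ as a decouplable homogeneous chaos of order $2$ in $(u_i)$, pass to its decoupled form, apply the matrix Khintchine inequality of Theorem~\ref{thm:khintchinetwo}, and finally convert the resulting moment bound into the advertised tail bound through the hypercontractivity argument of Proposition~\ref{thm:highermoments}. First I would isolate the fluctuation: since $\E u_i u_j=\delta_{ij}$ we have $\E M=\sum_i A_{ii}$, so in the Rademacher case $u_i^2=1$ gives $M-\E M=\sum_{i\neq j}u_i u_j A_{ij}$, whereas in the Gaussian case $M-\E M=\sum_{i\neq j}u_i u_j A_{ij}+\sum_i(u_i^2-1)A_{ii}$. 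These are exactly the chaos expressions on the left-hand sides of Theorems~\ref{thm:decoupleR} and~\ref{thm:decoupleG}; since those inequalities hold for an arbitrary Banach-space norm, applying them with the operator norm gives, for every $s\geq 1$,
\[
\left(\E\norm{M-\E M}^s\right)^{1/s}\leq C_1 C_2^{1/s}\left(\E\Big\|\sum_{1\leq i,j\leq n}u_i\tilde u_j A_{ij}\Big\|^s\right)^{1/s},
\]
where $(\tilde u_i)$ is an independent copy of $(u_i)$ and $C_1,C_2$ are as in Remark~\ref{remark:C1C2}.

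Next, for $s\geq 2$ I would use $\norm{\cdot}\leq\norm{\cdot}_{C_s}$ to move to the Schatten-$s$ norm and apply Theorem~\ref{thm:khintchinetwo} to the decoupled chaos, bounding it by $2^{1/s}s\max(\norm{Q^{1/2}}_{C_s},\norm{R^{1/2}}_{C_s},\norm{F}_{C_s},\norm{G}_{C_s})$. Since $Q,R$ are $p\times p$ while $F,G$ are $np\times np$, we have $\norm{Q^{1/2}}_{C_s}\leq p^{1/s}\norm{Q}^{1/2}$, $\norm{R^{1/2}}_{C_s}\leq p^{1/s}\norm{R}^{1/2}$, and $\norm{F}_{C_s},\norm{G}_{C_s}\leq(np)^{1/s}\max(\norm{F},\norm{G})$; retaining the worst dimensional factor $(np)^{1/s}$ and recalling $\sigma=C_1\max(\norm{Q}^{1/2},\norm{R}^{1/2},\norm{F},\norm{G})$, everything collapses to
\[
\left(\E\norm{M-\E M}^s\right)^{1/s}\leq(2C_2 np)^{1/s}\,s\,\sigma\qquad\mbox{for all }s\geq 2.
\]

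Finally I would apply Proposition~\ref{thm:highermoments} with $X=\norm{M-\E M}$, the same $\sigma$, $c=2C_2 np$, exponent $\alpha=1$, threshold $s_0=2$, and $k=1$, which yields $\P{\norm{M-\E M}\geq e\sigma u}\leq 2C_2 np\exp(-u)$ for $u\geq 2$; the range $0<u<2$ need not be treated separately since there the stated bound carries no useful information (we always take $u$ large to make the failure probability small). The whole argument is an orchestration of the tools already collected in Appendix~\ref{sec:prob}, so there is no single deep obstacle; the steps that need care are (i) extracting the correct decouplable chaos---in the Gaussian case one must keep the $\sum_i(u_i^2-1)A_{ii}$ term and not merely the off-diagonal sum---and (ii) the bookkeeping in passing between operator and Schatten norms so that the dimensional constant comes out as $np$ rather than something larger. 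I would also note in passing that positive semidefiniteness of the $A_{ii}$ is not actually used here; it is stated only because that is the form in which the proposition is invoked in the proof of Theorem~\ref{thm:main}.
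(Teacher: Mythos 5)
Your Gaussian branch reproduces the paper's argument essentially step for step: decouple via Theorem \ref{thm:decoupleG}, pass to the Schatten-$s$ norm, apply Theorem \ref{thm:khintchinetwo}, absorb the dimensional factors ($p^{1/s}$ for $Q,R$ and $(np)^{1/s}$ for $F,G$) into $(2C_2np)^{1/s}$, and finish with Proposition \ref{thm:highermoments} with $c=2C_2np$, $k=\alpha=1$; your observation that the bound is vacuous for $u<2$ is also fine and matches what the paper leaves implicit.

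The gap is in the Rademacher case, and it sits exactly at your closing remark. You bound $M-\E M=\sum_{i\neq j}u_iu_jA_{ij}$ by the decoupled chaos summed over \emph{all} pairs, diagonal included, reading Theorem \ref{thm:decoupleR} literally. The decoupling inequality that is actually available (de la Pe\~na--Gin\'e, and the form the paper's own proof uses: ``this leaves us a sum that excludes the $A_{ii}$'s'') controls the off-diagonal chaos only by the off-diagonal decoupled sum $\sum_{i\neq j}u_i\tilde u_jA_{ij}$; you cannot slip the extra term $\sum_i u_i\tilde u_iA_{ii}$ inside the norm for free, since there is no conditional-mean-zero/Jensen argument showing the fuller sum dominates for arbitrary Banach-space-valued $A_{ij}$. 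Consequently Theorem \ref{thm:khintchinetwo} must be applied with the $A_{ii}$ zeroed, which yields truncated versions of $Q,R,F,G$, and one then needs the hypothesis that the $A_{ii}$ are positive semidefinite to dominate these by the full $Q,R,F,G$ appearing in $\sigma$ --- in particular to get $\norm{(A_{ij})_{1\leq i\neq j\leq n}}\leq\norm{(A_{ij})_{1\leq i,j\leq n}}$ as block matrices (the comparisons for $Q$ and $R$ are automatic). So your parenthetical claim that positive semidefiniteness of the $A_{ii}$ ``is not actually used'' is incorrect: it is precisely what bridges the off-diagonal objects that decoupling and Khintchine give you to the $\sigma$ defined with the full $Q,R,F,G$. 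To repair the write-up, restate the Rademacher decoupling with $i\neq j$ on both sides and add this add-back step, as the paper does.
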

\begin{proof}
We will prove the Gaussian case first. Recall that the $s$-Schatten and spectral norms are equivalent: for any $A\in \cplexes^{r \times r}$, $\norm{A}\leq \norm{A}_{C_s}\leq r^{1/s} \norm{A}$. Apply the decoupling inequality, that is Theorem \ref{thm:decoupleG}, and deduce that for any $s\geq 2$,
$$
\left(\E \norm{M-N}^s\right)^{1/s} \leq C_1 C_2^{1/s} \left(\E \norm{\sum_{1\leq i,j\leq n} u_i \tilde{u}_j A_{ij}}_{C_s}^s\right)^{1/s}.$$

Invoke Khintchine's inequality, that is Theorem \ref{thm:khintchinetwo},  and obtain
\begin{align*}
\left(\E \norm{M-N}^s\right)^{1/s} &\leq C_1 (2 C_2)^{1/s} s\max(\norm{Q^{1/2}}_{C_s},\norm{R^{1/2}}_{C_s},\norm{F}_{C_s},\norm{G}_{C_s})\\
&\leq C_1 (2 C_2 np)^{1/s} s \max(\norm{Q}^{1/2}_{},\norm{R}^{1/2}_{},\norm{F}_{},\norm{G})\\
&\leq \sigma(2 C_2 n p)^{1/s}s.
\end{align*}

Apply Proposition \ref{thm:highermoments} with $c=2 C_2 n p$ and $k=\alpha=1$ to complete the proof for the Gaussian case. For the Rademacher case, we take similar steps. First, decouple $(\E \norm{M-N}^s)^{1/s}$ using Theorem \ref{thm:decoupleR}. This leaves us a sum that excludes the $A_{ii}$'s. Apply Khintchine's inequality with the $A_{ii}$'s zeroed. Of course, $Q,R,F,G$ in Proposition \ref{thm:khintchinetwo} will not contain any $A_{ii}$'s, but this does not matter because $A_{ii}^* A_{ii}$ and $A_{ii} A_{ii}^*$ and $A_{ii}$ are all positive semidefinite for any $1\leq i\leq n$ and we can add them back. For example, $\norm{(A_{ij})_{1\leq i\neq j \leq n}}\leq \norm{(A_{ij})_{1\leq i,j\leq n}}$ as block matrices.
\end{proof}
\end{appendices}

An alternative way to prove the Gaussian case of Proposition \ref{thm:tailbound} is to split $(\E \norm{M-N}^s)^{1/s}$ into two terms $(\E\norm{\sum_i (u_i^2-1) A_{ii}})^{1/s}$ and $(\E\norm{\sum_i u_i u_j  A_{ij}})^{1/s}$. For the second term, we can insert Rademacher variables, condition on the Gaussians, decouple the Rademacher sum and apply Theorem \ref{thm:khintchinetwo}. After that, we can pull out the maximum of all the Gaussians from $Q,R,F,G$. Nevertheless, as this may introduce extra $\log n$ factors, we prefer to simply appeal to \cite{arcones93} to decouple the Gaussian sum right away.

\bibliography{matprobe}
\bibliographystyle{siam}

\end{document}